\documentclass[12pt]{article}
\usepackage[utf8]{inputenc}
\usepackage{amsmath,amssymb,amsthm}
\usepackage[margin=2.5cm]{geometry}
\usepackage[dvipsnames]{xcolor}
\usepackage{hyperref}
\hypersetup{
  colorlinks=true,
  linkcolor=MidnightBlue,
  citecolor=MidnightBlue,
  urlcolor=MidnightBlue,
  pdftitle={Almost Golomb Sequences},
  pdfauthor={Beno\^it Cloitre}
}
\usepackage{enumitem}

\newtheorem{theorem}{Theorem}[section]
\newtheorem{lemma}[theorem]{Lemma}
\newtheorem{proposition}[theorem]{Proposition}
\newtheorem{corollary}[theorem]{Corollary}
\theoremstyle{definition}
\newtheorem{definition}[theorem]{Definition}

\theoremstyle{remark}
\newtheorem{conjecture}[theorem]{Conjecture}
\newtheorem{remark}[theorem]{Remark}

\newcommand{\Z}{\mathbb{Z}}
\newcommand{\Ical}{\mathcal{I}}
\newcommand{\one}{\mathbf{1}}
\newcommand{\oeis}[1]{\href{https://oeis.org/#1}{\textbf{#1}}}

\title{Almost Golomb Sequences}
\author{Beno\^it Cloitre}
\date{}

\begin{document}
\maketitle

\begin{abstract}
Golomb's sequence is the unique nondecreasing sequence of positive integers
in which each $n$ appears exactly $a(n)$ times. It satisfies the global
self-referential rule
\[
  a\bigl(a(n)+a(n-1)+\cdots+a(1)\bigr)=n,
\]
grows smoothly like a power of $n$ governed by the golden ratio,
and is not $k$-regular for any $k\ge 2$.

We introduce almost Golomb sequences, obtained by truncating the
cumulative sum to a sliding window of fixed size $r$,
\[
  a\bigl(a(n)+a(n-1)+\cdots+a(n-r+1)\bigr)=n.
\]
This finite-memory truncation changes the nature of the sequence completely.
The smooth power law gives way to oscillatory linear growth, and the
sequence becomes $r$-regular for every $r\ge 2$. For small values of $r$
we establish explicit denesting formulas, prove that $a(n)/n$ does not
converge, and uncover combinatorial structure including a cellular
automaton and a palindromic substitution.

A numerical surprise emerges when one varies $r$. The maximum multiplicity
across the family of sequences is governed by Golomb's sequence itself.
The sequence that was truncated reappears as the law controlling the
family it generated.
\end{abstract}

\noindent\textbf{2020 Mathematics Subject Classification:}
11B85 (primary), 11B37, 68R15 (secondary).

\noindent\textbf{Keywords:} self-referential sequences, Golomb's sequence,
$k$-regular sequences, automatic sequences, sliding-window recurrences,
denesting, non-convergence.

\section{Introduction}

\subsection{Golomb's sequence}\label{sec:golomb}

Golomb's sequence $(a(n))_{n\ge 1}$ (OEIS \oeis{A001462},~\cite{OEIS}) is the unique nondecreasing
sequence of positive integers in which the integer $n$ appears exactly $a(n)$ times.
It begins
\[
  1, 2, 2, 3, 3, 4, 4, 4, 5, 5, 5, 6, 6, 6, 6, 7, 7, 7, 7, \ldots
\]
In particular, $a$ satisfies
\begin{equation}\label{eq:golomb}
  a\!\left(\sum_{k=1}^{n} a(k)\right) = n, \qquad n\ge 1,
\end{equation}
a formulation recorded in the OEIS entry \oeis{A001462}.
Golomb's sequence is one of the most studied self-referential sequences in the
world of integer sequences, introduced by Golomb~\cite{Golomb66} and prominently
featured by Sloane among his favourite sequences.
Its asymptotic behavior is smooth:
\begin{equation}\label{eq:golomb-asymp}
  a(n) \sim \varphi^{2-\varphi}\, n^{\varphi-1}, \qquad n\to\infty,
\end{equation}
where $\varphi=(1+\sqrt{5})/2$ is the golden ratio~\cite{petermann95},
and $f\sim g$ means $f(n)/g(n)\to 1$.
Since $a(n)$ grows like $n^{\varphi-1}$ with irrational algebraic exponent
$\varphi-1\approx 0.618$, Golomb's sequence is not $k$-regular for any integer
$k\ge 2$: if it were, its subsequences along geometric progressions
$(a(k^m))_{m\ge 0}$ would satisfy a linear recurrence with integer
coefficients~\cite[Theorem~2.2]{AS92}, requiring the asymptotic growth factor
$k^{\varphi-1}$ to be an algebraic integer; but $k^{\varphi-1}$ is
transcendental by the Gelfond--Schneider theorem (which states that $\alpha^\beta$
is transcendental whenever $\alpha$ is algebraic, $\alpha\notin\{0,1\}$, and $\beta$
is an irrational algebraic number), a contradiction.

The combinatorial depth of Golomb's sequence has recently been made explicit by
Claes and Miyamoto~\cite{CM26}, who define the \emph{golombic operator}
$\Gamma$: given a sequence $a$, $\Gamma(a)$ is the sequence whose $n$-th
run has length $a(n)$ (that is, value $n$ is repeated $a(n)$ times).
They show that Golomb's sequence is the unique fixed point of $\Gamma$
in the class of nondecreasing sequences, establishing a rich algebraic
structure. Our almost Golomb sequences occupy a different regime. They are not fixed
points of $\Gamma$, but their run lengths are bounded (depending on $r$) and the
corresponding run-length sequence is $r$-automatic. For $r=2$ in particular the
run lengths lie in $\{1,2\}$ (Proposition~\ref{prop:N2_multiplicity}).
Numerically, the iterates $\Gamma^k(a_r)$ appear to converge to $G$ in the
prefix topology, suggesting a deeper dynamical link that we leave for future work.

\subsection{Almost Golomb sequences}

What happens when one replaces the full cumulative sum in
\eqref{eq:golomb} by a sliding window of the $r$ most recent terms. This leads to
sequences that we call \emph{almost Golomb sequences}: they share the meta-recursive
spirit of Golomb's sequence~\cite{Hofstadter}, but are built from a finite-memory rule.

\begin{definition}\label{def:almost-golomb}
  Let $r\ge 2$. The \emph{almost Golomb sequence of order $r$} is the
  nondecreasing sequence of positive integers $(a(n))_{n\ge 1}$,
  extended by $a(k)=0$ for $k\le 0$, satisfying
  \begin{equation}\label{eq:def}
    a\bigl(a(n)+a(n-1)+\cdots+a(n-r+1)\bigr) = n, \qquad n\ge 1,
  \end{equation}
  with $a(n)$ minimal: for every $n\ge 2$, no integer $m$ with
  $a(n-1)\le m<a(n)$ satisfies
  $a(m + a(n-1)+\cdots+a(n-r+1)) = n$.
\end{definition}

\begin{remark}
This is an implicit definition: the constraint~\eqref{eq:def}
involves the value of $a$ at the position $a(n)+\cdots+a(n-r+1)$,
which in general lies beyond~$n$.
The existence and uniqueness of the sequence, as well as the
consistency of the greedy rule, are established in
Section~\ref{sec:general} (Theorem~\ref{thm:global_run}).
\end{remark}

The almost Golomb sequences belong to a broader family of monotone self-referential
sequences admitting an automatic structure via denesting. A comparative table
of related sequences is collected in Appendix~\ref{app:nested}.

This truncation from an infinite to a finite memory window changes the nature of the sequence.
Golomb's global rule forces a smooth, sublinear growth governed by an irrational
exponent. The finite-window rule instead imposes linear growth accompanied by
discrete, $r$-automatic oscillations.
The sequence transitions from the realm of analytic power laws into the realm of
automatic sequences.

A sequence is \emph{$r$-regular} (in the sense of Allouche and Shallit~\cite{AS92})
if all its subsequences of the form $(f(r^i n+j))_{n\ge 0}$ lie in a finitely
generated $\mathbb{Z}$-module. Informally, its values along arithmetic progressions
in base $r$ satisfy linear recurrences.
This framework is well adapted to sequences defined by divide-and-conquer rules.

The paper is organised as follows.
Section~\ref{sec:general} establishes $r$-regularity universally.
Sections~\ref{sec:r2}--\ref{sec:r3} give complete constructive proofs
for $r\in\{2,3\}$.
Sections~\ref{sec:r4}--\ref{sec:r5} treat $r\in\{4,5\}$ via explicit
recurrences and Walnut certification.
Section~\ref{sec:nonconv} proves non-convergence of $a(n)/n$.
Section~\ref{sec:combinatorial} presents combinatorial interpretations
for $r\in\{2,3\}$.
Sections~\ref{sec:golomb_return}--\ref{sec:open} present the Golomb
meta-structure conjectures and open questions.

The main contributions of this paper are:
\begin{enumerate}
  \item A proof (Theorem~\ref{thm:structural_regularity}, Section~\ref{sec:general}) that $(a(n))$ is $r$-regular
        for every $r\ge 2$, by reducing the automaticity of the first-difference
        sequence $d(n)=a(n+1)-a(n)$ to a finite window of local data.
  \item Explicit denesting of the almost Golomb sequence for each
        $r\in\{2,3,4,5\}$ into $r$-adic divide-and-conquer formulas
        (Sections~\ref{sec:r2}--\ref{sec:r5}, Theorems~\ref{thm:r2}, \ref{thm:r3}, \ref{thm:r4}, \ref{thm:r5}).
        The cases $r\in\{2,3\}$ are proved directly; the cases $r\in\{4,5\}$
        use a hybrid method combining explicit recurrence identification
        with automatic certification by Walnut~\cite{Shallit2022}.
        The non-generic terms are controlled by explicit $r$-automatic
        binary correction sequences: one for $r=3$, four
        for $r=4$, and a disjoint pair $(\varepsilon,\eta)$ for $r=5$.
  \item For $r=2$, a Mallows-type nested recurrence
        (Proposition~\ref{prop:mallows-r2}):
        \[
          a(n+1)=1+a\bigl(n+1-a(a(n)+1)+a(a(n)-1)\bigr),
        \]
        analogous to the Mallows recurrence $G(n+1)=1+G(n+1-G(G(n)))$
        for Golomb's sequence.
  \item For each $r\in\{2,3,4,5\}$, a proof that $a(n)/n$ does not converge,
        identifying two explicit rational limit points
        (Section~\ref{sec:nonconv}, Theorems~\ref{thm:ratio2}, \ref{thm:ratio3},
        \ref{thm:ratio4}, \ref{thm:ratio5}).
        For $r\in\{4,5\}$ the proofs use
        Lemma~\ref{lem:auto-geometric} to extract the periodic
        corrector pattern along geometric progressions from the DFAO.
  \item Combinatorial interpretations for the cases $r=2$ and $r=3$
        (Section~\ref{sec:combinatorial}): a second-bit characterization of
        multiplicities and a Boolean cellular automaton for $r=2$,
        and a palindromic substitution rule for $r=3$.
  \item A conjectural Golomb meta-structure (Section~\ref{sec:golomb_return}):
        numerical evidence suggests that the maximal multiplicity
        $M(r)=\sup_n N_r(n)$ is governed by Golomb's sequence.
        More precisely, if $j_k$ denotes the smallest order $r$ such that
        $M(r)\ge k$, then the data support the identities
        \[
          j_{k+1}-j_k = G(k), \qquad j_k = S(k-1)+2,
        \]
        where $G$ is Golomb's sequence and $S(k)=\sum_{n\le k}G(n)$.
        We reduce this law to two precise boundary conjectures
        (the Prefix Conjecture and the Domination Lemma).
\end{enumerate}

\section{Universal $r$-regularity}\label{sec:general}

In this section we prove that the almost Golomb sequence of order $r$ is
$r$-regular for every $r\ge 2$. The proof has two parts.

First, we establish the global run structure of the sequence: the values are
not skipped, the increments are always $0$ or $1$, and for every $m\ge 3$
the first occurrence of $m$ is at position
\[
  S_m^{(r)}:=a(m)+a(m-1)+\cdots+a(m-r+1).
\]
Second, we use this run structure to express the
difference sequence at scale $rn+i$ in terms of a finite local window,
and then apply the Allouche--Shallit criterion (Theorem~\ref{thm:AS}).

\subsection{Global run structure}

We extend the sequence by setting $a(k)=0$ for $k\le 0$.
For $n\ge 1$, write $S_n:=S_n^{(r)}:=a(n)+\cdots+a(n-r+1)$,
so the defining equation reads $a(S_n)=n$.

\begin{lemma}\label{lem:anchor-growth}
For every $r\ge 2$, the sequence of starting positions $(S_n)_{n\ge 1}$ is strictly increasing.
\end{lemma}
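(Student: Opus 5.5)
The plan is to compare consecutive anchors directly. For $n\ge 1$,
\[
  S_{n+1}-S_n = a(n+1)-a(n-r+1),
\]
because the two windows share the terms $a(n),\dots,a(n-r+2)$. The sequence is nondecreasing, with $a(k)=0$ for $k\le 0$ and $a(k)\ge 1$ for $k\ge 1$. Hence $S_{n+1}-S_n\ge 0$, and it suffices to rule out equality. So we must show that $a(n+1)=a(n-r+1)$ is impossible.

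The first route to try uses the defining relation itself, and does not need the run structure, which is not yet available. Suppose $S_{n+1}=S_n$. Then
\[
  n = a(S_n) = a(S_{n+1}) = n+1,
\]
a contradiction. This already gives $S_{n+1}\neq S_n$, so together with the previous paragraph $S_{n+1}>S_n$. The only input is that $a$ is a well-defined function, so that equal arguments give equal values, and that \eqref{eq:def} holds for both $n$ and $n+1$.

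It is also worth checking the small cases explicitly, as a sanity check on the boundary convention. For $n\le r-1$ we have $n-r+1\le 0$, so $a(n-r+1)=0$. Then $S_{n+1}-S_n=a(n+1)\ge 1$ by positivity, with no appeal to \eqref{eq:def}.

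I do not expect a real obstacle. The only delicate point is logical: the remark says that existence of the sequence is established later, in Theorem~\ref{thm:global_run}. So the lemma should be read as a property of any nondecreasing positive sequence satisfying \eqref{eq:def}, which is exactly what the argument uses. Stated this way, the later theorem can invoke Lemma~\ref{lem:anchor-growth} without circularity.
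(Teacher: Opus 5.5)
Your argument is correct and is essentially the paper's: both derive a contradiction from $a(S_n)=n$ together with the monotonicity of $a$. The only difference is where monotonicity enters. The paper applies it at the anchor positions ($S_n\le S_{n-1}$ would give $n=a(S_n)\le a(S_{n-1})=n-1$), whereas you apply it to the window via $S_{n+1}-S_n=a(n+1)-a(n+1-r)\ge 0$ and use the defining relation only to exclude equality.
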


\begin{proof}
If $S_n\le S_{n-1}$ for some $n\ge 2$, then $n=a(S_n)\le a(S_{n-1})=n-1$,
a contradiction.
\end{proof}

The following theorem describes the run structure of $a$.

\begin{theorem}\label{thm:global_run}
For every $r\ge 2$, there exists a unique almost Golomb sequence of order $r$.
It satisfies:
\begin{enumerate}[label=(\roman*)]
  \item $d(n):=a(n+1)-a(n)\in\{0,1\}$ for every $n\ge 1$;
  \item every positive integer occurs at least once;
  \item for every $m\ge 3$, the first occurrence of $m$ is exactly at $S_m$;
  \item for every $m\ge 3$, the run of value $m$ is exactly $[S_m,\;S_{m+1}-1]$.
\end{enumerate}
\end{theorem}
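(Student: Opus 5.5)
The plan is to construct the sequence explicitly by a Golomb-style run-length scheme, check that it satisfies \eqref{eq:def} together with the minimality clause of Definition~\ref{def:almost-golomb}, and then show that any almost Golomb sequence of order $r$ must agree with it. Properties (i)--(iv) will come out of the construction.

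\textbf{Initial segment.} First I would determine the values forced at the start. From $S_1=a(1)$ and $a(a(1))=1$ we get $a(1)=1$. Minimality then gives $a(2)=2$: the choice $a(2)=1$ would require $a(1+1)=2$, a contradiction. Next $S_2=3$, so $a(3)=2$, and from this point we are in the generic regime. I would work out the first few positions, up to about $S_3$, by hand, uniformly in $r$. Since $a(k)=0$ for $k\le 0$, the sum $S_n$ does not depend on $r$ as long as $n\le r$, so this start is uniform in $r$. The purpose of this step is to reach a prefix $a(1),\dots,a(S_3-1)$ in which the increments are $0$ or $1$, the values $1$ and $2$ both occur, and $a(S_3)=3$.

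\textbf{Inductive construction.} Suppose the prefix up to position $S_m-1$ is built, with value $m-1$ ending there. Put $a(j)=m$ for $S_m\le j\le S_{m+1}-1$. For this to make sense I need two facts.
\begin{itemize}
\item The endpoint $S_{m+1}=a(m+1)+\cdots+a(m-r+2)$ only uses indices $\le m+1$, and these are already defined. This holds because $S_m>m+1$ for $m\ge 3$, which follows from $a(m)+a(m-1)\ge 2$ and the positions being filled one run at a time. The point is that the rule reads values far behind the current write head, exactly as in Golomb's construction.
\item The run length $S_{m+1}-S_m=a(m+1)-a(m-r+1)$ is at least $1$. This needs $a$ to strictly increase over any $r$ consecutive indices. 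By the inductive hypothesis every value occurs, and because $m+1<S_m$, the values $a(i)$ for $i\le m+1$ lie inside runs whose lengths are already bounded. I expect to get this from the run-length bound together with Lemma~\ref{lem:anchor-growth}.
\end{itemize}
With the construction in place, (i), (ii) and (iv) hold by design, and (iii) is $a(S_m)=m$ with $a(S_m-1)=m-1$. Equation \eqref{eq:def} for $n\ge 3$ is exactly $a(S_n)=n$. For minimality at $n$, take any $m'$ with $a(n-1)\le m'<a(n)$. By (i) this forces $m'=a(n-1)$ and $a(n)=a(n-1)+1$, so we must rule out $a(S_n-1)=n$. That holds because $S_n$ is the first occurrence of $n$.

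\textbf{Uniqueness.} Let $b$ be any almost Golomb sequence of order $r$, and write $T_n$ for its window sums. By Lemma~\ref{lem:anchor-growth} the $T_n$ are strictly increasing, and $b(T_n)=n$. Because $b$ is nondecreasing, every position strictly between $T_m$ and $T_{m+1}$ carries value $m$ or $m+1$. I would show by strong induction on $m$ that $b$ agrees with $a$ up to position $T_{m+1}-1$. To rule out the value $m+1$ appearing before $T_{m+1}$, I would use minimality at index $n=m+1$ to argue that any earlier occurrence would let a smaller choice of $b(m+1)$ satisfy the constraint. \textbf{The main obstacle} is that the minimality clause is phrased in terms of $b(n)$, not in terms of the first occurrence of $n$. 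The key step is therefore to show that lowering $b(m+1)$ by one moves $T_{m+1}$ down by exactly one. Once that is established, an earlier occurrence of $m+1$ at position $T_{m+1}-1$ would contradict minimality, and the induction closes.
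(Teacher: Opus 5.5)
Your existence half is essentially the paper's argument: build $b$ run by run with the run of $m$ equal to $[S_m,S_{m+1}-1]$, and at index $n$ the only candidate below $a(n)$ is $a(n-1)$, which tests position $S_n-1$. The uniqueness half, however, has a genuine gap.

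\textbf{The gap.} For any nondecreasing solution $b$ of \eqref{eq:def}, the increments lie in $\{0,1\}$, because $b(1)=1$, $b(T_n)=n$ and $T_n$ is increasing. So the minimality clause at index $n$ tests exactly one position, $T_n-1$, and only when $b(n)=b(n-1)+1$. If $b(m+1)=b(m)$, the range $[b(m),b(m+1))$ is empty, so there is nothing to lower, and no clause of Definition~\ref{def:almost-golomb} constrains $b(T_{m+1}-1)$. The step you flag as key is automatic, since $T_{m+1}$ is affine in $b(m+1)$, and it does not touch this case.

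For $r=2$ the gap is harmless: $d(m)=0$ forces $T_{m+1}-T_m=d(m-1)\le 1$, so no position lies strictly between $T_m$ and $T_{m+1}$. For $r\ge 3$ it is real. Already for $r=3$, take $c=1,2,2,3,3,3,4,5,6,7,7,8,8,8,9,9,9,10,10,\ldots$, with the run of $m$ equal to $[T_m,T_{m+1}-1]$ for $m\ge 4$. This $c$ satisfies \eqref{eq:def} and monotonicity. Its clause at $n=3$ is vacuous, its clause at $n=4$ tests $c(6)=3\ne 4$, and later clauses hold exactly as in the existence proof. So $c$ meets every requirement that $1,2,2,2,3,4,5,5,\ldots$ meets, yet $3$ first occurs at position $4<T_3=5$.

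The paper does not close this gap either. Its uniqueness paragraph only asserts that the runs are ``forced'', and $c$ contradicts its claim that the prefix $a(2)=\cdots=a(S_3-1)=2$ is forced.

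\textbf{The missing idea.} Read ``minimal'' as the sequential greedy choice, so that the sequence is the lexicographically least nondecreasing solution of \eqref{eq:def}. Under that reading your construction identifies it. Let $c\ne a$ be another solution and let $p$ be the first position where they differ. The equation forces $c(1)=1$ and $c(2)=c(3)=2$, so $p\ge 4$, and both $a(p)$ and $c(p)$ lie in $\{a(p-1),a(p-1)+1\}$. Suppose $a(p)=a(p-1)+1=:v$. Then (iii) gives $p=S_v$ with $v\le p-1$. The window defining $S_v$ therefore lies in the common prefix, so $c(p)=c(S_v)=v=a(p)$, a contradiction. Hence $a(p)=a(p-1)<c(p)$.

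\textbf{Two smaller repairs in the existence half.}
First, $S_m>m+1$ is false for $r=2$ ($S_3=4$, $S_4=5$). Only $S_m\ge m+1$ holds, and only that is needed: when equality holds, position $m+1=S_m$ receives the value $m$ at the current step.
Second, to get $L_m=b(m+1)-b(m+1-r)\ge 1$ you must carry in the induction that every run of the current prefix has length $\le r$. This propagates because $L_m=\sum_{j=0}^{r-1}d(m-j)\le r$. You also need that $m$ cannot occur at position $m+1-r<S_m$. Lemma~\ref{lem:anchor-growth} cannot be invoked here, since it presupposes $a(S_n)=n$ for all $n$.
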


\begin{proof}
We construct a sequence $b$ by its runs, show that it satisfies
Definition~\ref{def:almost-golomb}, and then prove uniqueness.

\textit{Initial prefix.}
For every $r\ge 2$, the constraint~\eqref{eq:def} at $n=1$ gives $a(S_1)=1$
with $S_1=a(1)$, forcing $a(1)=1$.
At $n=2$, minimality gives $a(2)=\cdots=a(S_3-1)=2$
(since $a(k)=0$ for $k\le 0$ makes $S_2=a(2)+a(1)=a(2)+1$,
and the constraint $a(S_2)=2$ is first satisfiable with $a(2)=2$).
This prefix is nondecreasing with increments in $\{0,1\}$, its image is
$\{1,2\}$, and $S_3\ge 4$.
Define $b$ identically on this prefix.

\textit{Inductive run construction.}
The induction hypothesis $\mathcal H_m$ asserts: the prefix of $b$
up to $S_m^b-1$ is nondecreasing with increments in $\{0,1\}$,
its image is exactly $\{1,\ldots,m-1\}$, runs are as stated,
every run has length $\le r$, and $S_m^b\ge m+1$.

Assume $\mathcal H_m$. Set $b(S_m^b):=m$ and define $b\equiv m$ on
$[S_m^b,\,S_{m+1}^b-1]$. We must show $S_{m+1}^b>S_m^b$,
i.e.\ $L_m^b:=b(m+1)-b(m+1-r)\ge 1$.
Since $\mathcal H_m$ gives $S_m^b\ge m+1$, position $m+1$ lies within
a previously defined run, so the value $b(m+1)$ is already determined
at this stage.

Suppose $L_m^b=0$. Then $b$ is constant on $[m+1-r,m+1]$ with value $v$.
If $v=m$: the value $m$ already appears at position $m+1-r$.
Since the first occurrence of $m$ is at $S_m^b$, this implies $m+1-r\ge S_m^b$.
But $\mathcal H_m$ gives $S_m^b\ge m+1$, hence $m+1-r\ge m+1$, impossible since $r\ge 2$.
If $v<m$: the run of $v$ has length $\ge r+1$, contradicting $\mathcal H_m$.
Hence $L_m^b\ge 1$, $S_{m+1}^b>S_m^b$.
Moreover $L_m^b=\sum_{j=0}^{r-1}d(m-j)\le r$ since $d\in\{0,1\}$,
so the run of $m$ has length $\le r$ and $\mathcal H_{m+1}$ holds.

\textit{Verification of the greedy rule.}
For positions $q<S_3^b$ this is checked directly.
Fix a position $q\ge S_3^b$ and let $m:=b(q)$ be the value assigned by
the run construction. We must show that $m$ is the smallest integer
$v\ge b(q-1)$ satisfying $b(v+b(q-1)+\cdots+b(q-r+1))=q$.

For the candidate $v=m$, the window sum is
$T_q(m):=m+b(q-1)+\cdots+b(q-r+1)=S_q^b$, and $b(S_q^b)=q$ since
runs begin at their starting position. Hence $v=m$ satisfies the defining equation.

It remains to show minimality.
If $q=S_m^b$ (start of the run of $m$): then $b(q-1)=m-1$, so $v\ge m-1$.
The candidate $v=m-1$ gives window sum $T_q(m-1)=S_q^b-1<S_q^b$.
Since $S_q^b$ is the starting position for value $q$, we have $b(S_q^b-1)<q$,
so $v=m-1$ fails.
If $q>S_m^b$ (interior of the run): then $b(q-1)=m$, so $v\ge m$ and
$m$ is already the smallest candidate.
Hence $b$ satisfies Definition~\ref{def:almost-golomb}, so existence holds
and $b$ has properties (i)--(iv).

For uniqueness, let $c$ be any sequence satisfying
Definition~\ref{def:almost-golomb}.
The constraint at $n=1,2$ forces $c(1)=b(1)=1$ and $c(n)=b(n)=2$
for $2\le n\le S_3^b-1$, so $c$ and $b$ share the same initial prefix.
Assume inductively that $c$ agrees with $b$ up to $S_m^b-1$.
Then the next admissible value and its maximal run compatible with
Definition~\ref{def:almost-golomb} are forced, so the run of value $m$
in $c$ coincides with that of $b$.
Hence $c=b$ term by term.
\end{proof}

\begin{lemma}\label{lem:runlength-d}
For every $r\ge 2$ and $m\ge 3$,
\[
  L_m:=S_{m+1}-S_m=a(m+1)-a(m+1-r)=\sum_{j=0}^{r-1} d(m-j).
\]
\end{lemma}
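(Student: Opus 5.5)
The plan is a direct telescoping computation from the definition of $S_n$ and the difference sequence $d$. I will not need the run-structure statements of Theorem~\ref{thm:global_run}, only the definitions and property (i) for the final bookkeeping.

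First, write out the two window sums,
\[
  S_{m+1}=a(m+1)+a(m)+\cdots+a(m+2-r),\qquad
  S_m=a(m)+a(m-1)+\cdots+a(m+1-r).
\]
The terms $a(m),\dots,a(m+2-r)$ are common to both, so they cancel in the difference. This gives $L_m=S_{m+1}-S_m=a(m+1)-a(m+1-r)$, with the convention $a(k)=0$ for $k\le 0$, which is exactly the convention used to define $S_n$.

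Second, express $a(m+1)-a(m+1-r)$ as a telescoping sum of consecutive differences:
\[
  a(m+1)-a(m+1-r)=\sum_{j=0}^{r-1}\bigl(a(m+1-j)-a(m-j)\bigr)=\sum_{j=0}^{r-1}d(m-j).
\]
Here $d(n)=a(n+1)-a(n)$. This identity is purely algebraic and holds for any sequence.

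The only point needing care, and the one I expect to be the main obstacle, is the range of the indices. Theorem~\ref{thm:global_run}(i) defines $d(n)$ only for $n\ge 1$, but when $m<r$ some indices $m-j$ are $\le 0$. I will handle this by interpreting $d(n)=a(n+1)-a(n)$ for all $n$ using the zero extension. Then $d(n)=0$ for $n\le -1$, and $d(0)=a(1)-a(0)=1$, so $d$ still takes values in $\{0,1\}$ everywhere and the identity holds verbatim.

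Finally, I will note the consistency check that $L_m$ is exactly the length of the run $[S_m,S_{m+1}-1]$ of value $m$ from Theorem~\ref{thm:global_run}(iv). This length is $\ge 1$ by Lemma~\ref{lem:anchor-growth}, and it is $\le r$ because each $d(m-j)\in\{0,1\}$.
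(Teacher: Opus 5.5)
Your proof is correct and is essentially the paper's argument: the paper also telescopes $S_{m+1}-S_m$ to $a(m+1)-a(m+1-r)$ and then into $\sum_{j=0}^{r-1}d(m-j)$, citing Theorem~\ref{thm:global_run}(iv) only to identify $L_m$ with the run length of $m$. Your explicit treatment of indices $m-j\le 0$ through the zero extension ($d(0)=1$, $d(n)=0$ for $n\le -1$) is a detail the paper leaves implicit; it agrees with the convention $d(0):=1$ that the paper adopts later.
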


\begin{proof}
By Theorem~\ref{thm:global_run}$(iv)$, $L_m=S_{m+1}-S_m$.
Telescoping: $S_{m+1}-S_m=a(m+1)-a(m+1-r)=\sum_{j=0}^{r-1}d(m-j)$.
\end{proof}

Composing the map $n\mapsto S_n$ with itself gives the following identity.

\begin{lemma}\label{lem:nested-anchor}
For every $r\ge 2$ and $n\ge 1$,
\[
  S_{S_n}=rn-R_n, \qquad R_n:=\sum_{j=1}^{r-1}(r-j)\,d(S_n-j)\in[0,\,r(r-1)/2].
\]
\end{lemma}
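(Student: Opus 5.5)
We expand $S_{S_n}$ by its definition and read off the values of $a$ just before position $S_n$ from the run structure. Write $N:=S_n$. By definition,
\[
  S_{N}=a(N)+a(N-1)+\cdots+a(N-r+1).
\]
The defining equation gives $a(N)=a(S_n)=n$. For the remaining terms we telescope backwards using first differences. For $1\le j\le r-1$,
\[
  a(N-j)=a(N)-\sum_{i=1}^{j} d(N-i)=n-\sum_{i=1}^{j} d(N-i),
\]
where $d(k)=a(k+1)-a(k)$, and we use $a(k)=0$ for $k\le 0$ together with the convention that $d$ is defined by the same difference formula there.

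Summing over $j=0,\dots,r-1$ gives
\[
  S_N=rn-\sum_{j=1}^{r-1}\sum_{i=1}^{j} d(N-i).
\]
We swap the order of summation. For a fixed $i$, the index $j$ ranges over $i\le j\le r-1$, which is $r-i$ values. Hence the double sum equals $\sum_{i=1}^{r-1}(r-i)\,d(N-i)=R_n$, which is exactly the claimed identity $S_{S_n}=rn-R_n$.

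For the bounds on $R_n$ we need $d(N-i)\in\{0,1\}$ for $1\le i\le r-1$. Theorem~\ref{thm:global_run}(i) gives this whenever $N-i\ge 1$. Then $0\le R_n\le\sum_{i=1}^{r-1}(r-i)=r(r-1)/2$.

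The main obstacle is the boundary region where $N-i\le 0$, that is, small $n$. There the convention $a(k)=0$ for $k\le 0$ applies, so $d(0)=a(1)-a(0)=1$ and $d(k)=0$ for $k<0$. These values still lie in $\{0,1\}$, so the bound survives. It also helps that $S_n\ge a(n)\ge 1$ and that $S_n\ge n+1$ for $n\ge 3$, by $\mathcal H_n$ in the proof of Theorem~\ref{thm:global_run}. So only $n\in\{1,2\}$ reach into nonpositive indices, and these cases can be checked by hand.
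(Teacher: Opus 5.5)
Your proof is correct and is essentially the paper's argument: telescope $a(S_n-k)=n-\sum_{j=1}^{k}d(S_n-j)$, sum over $k$, and swap the order of summation. Your justification of $0\le R_n\le r(r-1)/2$ through the extended differences $d(0)=1$ and $d(k)=0$ for $k<0$ also makes explicit what the paper leaves implicit.

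One correction: the closing claim that only $n\in\{1,2\}$ reach nonpositive indices is false for large $r$. For $r=10$ one has $S_3=5$ and $S_4=7$, both $\le r-1$, because $S_n\ge n+1$ does not give $S_n\ge r$. The claim is harmless, though, since your preceding sentence already handles every nonpositive index uniformly.
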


\begin{proof}
Since $a(S_n)=n$, one has $a(S_n-k)=n-\sum_{j=1}^{k}d(S_n-j)$ for $0\le k\le r-1$.
Summing: $S_{S_n}=rn-\sum_{j=1}^{r-1}(r-j)d(S_n-j)$.
\end{proof}

The value $d(rn+i)$ can be read from a bounded window of local data.

\begin{lemma}\label{lem:local_projection}
Let $A\ge 3$.
\begin{enumerate}[label=(\roman*)]
  \item For $X\ge 0$: $d(S_A+X)=1\iff \sum_{j=0}^{\delta-1}L_{A+j}=X+1$
    for some $\delta\in\{1,\ldots,X+1\}$.
    Hence $d(S_A+X)$ is determined by $(L_A,\ldots,L_{A+X})$.
  \item For $Y\ge 1$: $d(S_A-1-Y)=1\iff \sum_{j=1}^{\delta}L_{A-j}=Y$
    for some $\delta\in\{1,\ldots,Y\}$.
    Hence $d(S_A-1-Y)$ is determined by $(L_{A-1},\ldots,L_{A-Y})$.
    Also, $d(S_A-1)=1$ always.
\end{enumerate}
\end{lemma}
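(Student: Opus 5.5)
The plan is to reduce both parts to one observation: a position $p$ has $d(p)=1$ exactly when $p+1$ is the first position of a new run. By Theorem~\ref{thm:global_run}(i) and (iv), for values $m\ge 3$ the runs are the intervals $[S_m,S_{m+1}-1]$ and increments lie in $\{0,1\}$. So for every $p\ge S_3-1$,
\[
  d(p)=1 \iff p+1=S_B \text{ for some } B\ge 3 .
\]
The jump $d(p)=1$ occurs precisely between the last element $S_{B}-1$ of the run of $B-1$ and the first element $S_B$ of the run of $B$. In particular, taking $p=S_A-1$ gives $a(S_A-1)=A-1$ and $a(S_A)=A$. This proves the last assertion $d(S_A-1)=1$ of part (ii) immediately.

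For part (i), I take $p=S_A+X$ with $X\ge 0$. Then $p+1>S_A$, so any anchor $S_B$ equal to $p+1$ must have $B=A+\delta$ with $\delta\ge 1$, by Lemma~\ref{lem:anchor-growth}. Telescoping with Lemma~\ref{lem:runlength-d} gives $S_{A+\delta}-S_A=\sum_{j=0}^{\delta-1}L_{A+j}$. Hence $d(S_A+X)=1$ iff this partial sum equals $X+1$ for some $\delta\ge1$. Since every $L_m\ge 1$ (strict increase of $S$), the partial sum with $\delta$ terms is at least $\delta$. Therefore only $\delta\le X+1$ can work, and the condition involves only $L_A,\dots,L_{A+X}$. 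This gives the determinacy claim.

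For part (ii), I take $p=S_A-1-Y$ with $Y\ge1$, so $p+1=S_A-Y<S_A$. An anchor at $p+1$ must then be $S_{A-\delta}$ with $\delta\ge 1$, and $S_A-S_{A-\delta}=\sum_{j=1}^{\delta}L_{A-j}$. Again $L\ge1$ forces $\delta\le Y$, so only $L_{A-1},\dots,L_{A-Y}$ enter.

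The one delicate point, which I expect to be the main obstacle, is the boundary near the initial prefix. The run description (iv) is only valid for values $m\ge3$, and the run of $2$ begins at position $2$ rather than at $S_2$. For part (i) there is no issue, since all positions are $\ge S_A\ge S_3$. For part (ii), I would make explicit the implicit hypothesis that the window stays in the regular region: $A-\delta\ge 3$ for the relevant $\delta$, equivalently $S_A-1-Y\ge S_3-1$. With that hypothesis the equivalence "$d(p)=1\iff p+1$ is an anchor $S_B$ with $B\ge3$" is valid on the whole range. Otherwise I would use the convention that $L_m$ is only read for $m\ge3$ and treat the finitely many prefix positions by direct inspection of the initial block $1,2,\dots,2$. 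This boundary convention is harmless for the later application, where $A$ is large relative to the window.
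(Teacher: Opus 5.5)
Your proof is correct and follows the paper's argument: characterize $d(p)=1$ as ``$p+1$ is an anchor $S_B$'', telescope the $L$'s, and bound $\delta$ using $L_m\ge 1$. Your boundary caveat for (ii) is a genuine refinement. The paper's blanket claim $d(k)=1\iff k=S_m-1$ fails in the prefix (for every $r$, $S_2-1=2$ but $d(2)=0$). So (ii) does need $S_A-Y\ge S_3$, which holds in the later application because $n\ge r+1$ and $Y\le r-2$. You also correctly place $S_A-1$ at the end of the run of $A-1$, where the paper says $A$.
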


\begin{proof}
By Theorem~\ref{thm:global_run}, $d(k)=1\iff k=S_m-1$ for some $m$.
For $(i)$: $d(S_A+X)=1\iff S_A+X+1=S_{A+\delta}$ for some $\delta\ge 1$,
i.e.\ $\sum_{j=0}^{\delta-1}L_{A+j}=X+1$; since $L_u\ge 1$, one has $\delta\le X+1$.
For $(ii)$: similarly $d(S_A-1-Y)=1\iff S_A-Y=S_{A-\delta}$,
i.e.\ $\sum_{j=1}^{\delta}L_{A-j}=Y$; and $d(S_A-1)=1$ since $S_A-1$ is the last
position of the run of $A$.
\end{proof}

The run structure gives a direct characterization of the value $a(K)$.

\begin{lemma}\label{lem:anchor-characterization}
For every $r\ge 2$, $m\ge 3$, and every integer $K\ge 1$,
\[
  a(K)=m \iff S_m \le K < S_{m+1}.
\]
\end{lemma}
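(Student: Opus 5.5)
The plan is to read the equivalence directly off the run structure in Theorem~\ref{thm:global_run}, using monotonicity of $a$ and the strict growth of the anchors from Lemma~\ref{lem:anchor-growth}. By Theorem~\ref{thm:global_run}$(iv)$, for each $m\ge 3$ the run of value $m$ is exactly the integer interval $[S_m,\,S_{m+1}-1]$. Lemma~\ref{lem:anchor-growth} guarantees $S_m<S_{m+1}$, so this interval is nonempty and the statement is meaningful.

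For the direction ($\Leftarrow$), suppose $S_m\le K<S_{m+1}$. Then $K$ lies in $[S_m,S_{m+1}-1]$, which by $(iv)$ is the run of $m$, so $a(K)=m$.

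For the direction ($\Rightarrow$), suppose $a(K)=m$. By $(iii)$ the first occurrence of $m$ is at $S_m$, hence $K\ge S_m$. For the upper bound, $(iv)$ says the run of $m$ ends at $S_{m+1}-1$, so $a(S_{m+1}-1)=m$. Since $m+1\ge 4\ge 3$, $(iii)$ applied to $m+1$ gives $a(S_{m+1})=m+1$. Because $a$ is nondecreasing, every $K\ge S_{m+1}$ satisfies $a(K)\ge m+1>m$. Therefore $K<S_{m+1}$.

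The only step needing care is precisely what ``run'' means in $(iv)$: it must be the full set of positions carrying the value $m$, not merely a maximal block of consecutive equal terms that might reappear later. Monotonicity of $a$ rules out reappearance, since a nondecreasing sequence cannot return to $m$ after taking a larger value. So the argument above never relies on anything beyond $(iii)$, $(iv)$ and monotonicity. No case distinction in $r$ is needed, and the hypothesis $m\ge 3$ is exactly what allows $(iii)$ and $(iv)$ to be applied to both $m$ and $m+1$.
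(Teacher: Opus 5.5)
Your proof is correct and takes essentially the same route as the paper, which reads the equivalence directly off Theorem~\ref{thm:global_run}$(iv)$ together with Lemma~\ref{lem:anchor-growth}. Your additional use of $(iii)$, and of monotonicity to rule out $m$ reappearing later, simply makes explicit what the paper's one-line argument takes for granted.
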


\begin{proof}
By Lemma~\ref{lem:anchor-growth} the positions $S_m$ are strictly increasing,
and by Theorem~\ref{thm:global_run}(iv) the run of value~$m$ is exactly
$[S_m,\,S_{m+1}-1]$.
\end{proof}

\subsection{The Allouche--Shallit criterion}

A sequence $(f(n))_{n\ge 0}$ with values in a finite set is \emph{$r$-automatic}~\cite{Cobham}
if its $r$-kernel $\{(f(r^in+j))_{n\ge 0}:i\ge 0,\,0\le j<r^i\}$ is finite.
A sequence with integer values is \emph{$r$-regular}~\cite{AS92} if the $\mathbb{Z}$-module
spanned by its $r$-kernel is finitely generated. Every $r$-automatic sequence is $r$-regular.

The following criterion underlies all automaticity results in this paper.

\begin{theorem}[\cite{AS12}]\label{thm:AS}
Let $(U(n))_{n\ge 0}$ take values in a finite set, and let $r\ge 2$.
Then $U$ is $r$-automatic if there exist $a,b\ge 0$ and $n_0\ge 0$ such that
for all $n\ge n_0$ and $0\le i<r$,
\[
  U(rn+i) = f_i\bigl(U(n-a),\ldots,U(n+b)\bigr)
\]
for some functions $f_i$.
\end{theorem}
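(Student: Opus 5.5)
Since Theorem~\ref{thm:AS} is quoted from~\cite{AS12}, I would only sketch why it holds. The aim is to prove finiteness of the $r$-kernel directly, by packaging $U$ into a sliding-window vector that evolves under finitely many maps. Let $\Sigma$ be the finite value set of $U$.

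\textbf{Step 1: a window that maps to itself.} Fix integers $A\ge a$ and $B\ge b$, to be chosen below. Define
\[
W(n):=\bigl(U(n-A),\ldots,U(n+B)\bigr)\in\Sigma^{A+B+1}.
\]
Take a component $U(rn+i+t)$ of $W(rn+i)$, where $0\le i<r$ and $-A\le t\le B$. Write $rn+i+t=r(n+s)+i'$ with $0\le i'<r$. Then $s$ lies between $\lfloor (i-A)/r\rfloor\ge -\lceil A/r\rceil$ and $\lfloor (r-1+B)/r\rfloor$.

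The hypothesis gives $U(r(n+s)+i')=f_{i'}\bigl(U(n+s-a),\ldots,U(n+s+b)\bigr)$ whenever $n+s\ge n_0$. This only involves indices inside $[n-A,n+B]$ if $\lceil A/r\rceil+a\le A$ and $\lfloor (r-1+B)/r\rfloor+b\le B$. Since $r\ge 2$, both inequalities hold once $A\ge 2a+2$ and $B\ge 2b+2$. With such a choice there are maps $F_i:\Sigma^{A+B+1}\to\Sigma^{A+B+1}$ satisfying
\[
W(rn+i)=F_i\bigl(W(n)\bigr)\qquad\text{for all } n\ge n_1:=n_0+\lceil A/r\rceil .
\]

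\textbf{Step 2: iterate.} Write $j<r^k$ in base $r$ as $j=\sum_{l<k} j_l r^l$. Peeling off one digit at a time gives
\[
W(r^kn+j)=F_{j_0}\circ F_{j_1}\circ\cdots\circ F_{j_{k-1}}\bigl(W(n)\bigr)=:\Phi_{k,j}\bigl(W(n)\bigr).
\]
Each intermediate argument is at least $n$, so this is valid for every $n\ge n_1$. The kernel sequence $n\mapsto U(r^kn+j)$ is the middle coordinate of $W(r^kn+j)$. Therefore, for $n\ge n_1$, it is entirely determined by the map $\Phi_{k,j}$ together with the fixed sequence $W$.

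\textbf{Step 3: finiteness.} There are only finitely many self-maps of the finite set $\Sigma^{A+B+1}$, so $\Phi_{k,j}$ takes finitely many possible values. Hence every kernel sequence is determined by two pieces of data:
\begin{itemize}
\item the map $\Phi_{k,j}$;
\item the finite tuple $\bigl(U(r^kn+j)\bigr)_{0\le n<n_1}\in\Sigma^{n_1}$.
\end{itemize}
Both range over finite sets, so the $r$-kernel is finite, i.e.\ $U$ is $r$-automatic.

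The only delicate point is Step~1: the window must be wide enough that one application of the $f_i$ stays inside it. Because $r\ge 2$, dividing an index by $r$ shrinks the required offsets by at least a factor of two, and the choice $A\ge 2a+2$, $B\ge 2b+2$ absorbs this. Handling the initial indices $n<n_1$ then costs only the finite extra data in Step~3.
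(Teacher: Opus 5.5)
Your proof is correct. The paper gives no proof of this criterion; it simply quotes it from \cite{AS12}. What you wrote is the standard argument behind it, and it is complete as stated:
\begin{enumerate}
\item Pass to the sliding-window vector $W(n)$.
\item Check that $W(rn+i)=F_i(W(n))$ for $n\ge n_1$. Your offset bounds $\lceil A/r\rceil+a\le A$ and $\lceil B/r\rceil+b\le B$ are right, and $A\ge 2a+2$, $B\ge 2b+2$ guarantees them.
\item Conclude that each kernel element is determined by one of finitely many maps $\Phi_{k,j}$ together with a prefix in $\Sigma^{n_1}$.
\end{enumerate}
Step 3 is exactly finiteness of the $r$-kernel, which is how the paper defines $r$-automaticity.

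Two cosmetic points, neither affecting the argument:
\begin{itemize}
\item With $n_1=n_0+\lceil A/r\rceil$, the first coordinate $U(n-A)$ of $W(n)$ can have negative index. For example, $r=2$, $n_0=a$, $A=2a+2$ gives $n_1=2a+1<A$. So either set $n_1:=\max\bigl(A,\,n_0+\lceil A/r\rceil\bigr)$, or extend $U$ arbitrarily to negative indices; $F_i$ never reads that coordinate anyway.
\item The kernel sequence $n\mapsto U(r^kn+j)$ is the coordinate of $W(r^kn+j)$ at offset $0$. This is not literally the ``middle'' coordinate when $A\ne B$.
\end{itemize}
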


The following observation will be used in Section~\ref{sec:nonconv}
to evaluate automatic sequences along geometric progressions.

\begin{lemma}\label{lem:auto-geometric}
Let $u$ be a $b$-automatic sequence generated by a DFAO reading
base-$b$ expansions from most significant to least significant digit,
and let $P$, $Q$ be fixed base-$b$ words.
Then the sequence $k\mapsto u([P\,0^k\,Q]_b)$ is ultimately periodic.
\end{lemma}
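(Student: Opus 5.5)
The plan is a pigeonhole argument on the states of the DFAO. Let $\mathcal A=(Q_{\mathcal A},\Sigma_b,\delta,q_0,\tau)$ be a DFAO generating $u$, reading the base-$b$ expansion from the most significant digit. We fix the convention, implicit in the statement, that $P$ is nonempty with nonzero leading digit, or else that $\mathcal A$ is insensitive to leading zeros (we may always replace $\mathcal A$ by such an automaton). Then $[P\,0^k\,Q]_b$ is represented by the word $P\,0^k\,Q$, and
\[
  u\bigl([P\,0^k\,Q]_b\bigr)=\tau\Bigl(\delta\bigl(\delta(\delta(q_0,P),0^k),Q\bigr)\Bigr).
\]

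The key steps are as follows.

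\begin{enumerate}
  \item Set $p:=\delta(q_0,P)$, and define the state sequence $s_k:=\delta(p,0^k)$. It satisfies $s_{k+1}=\delta(s_k,0)$.
  \item Since $Q_{\mathcal A}$ is finite, the sequence $(s_k)$ is the orbit of a point under the self-map $q\mapsto\delta(q,0)$ of a finite set. By pigeonhole there exist $k_0\ge 0$ and $t\ge 1$ with $s_{k_0+t}=s_{k_0}$ and $k_0+t\le |Q_{\mathcal A}|$.
  \item Induction on $k$ then gives $s_{k+t}=s_k$ for all $k\ge k_0$, because each term determines the next.
  \item Finally, $u([P\,0^k\,Q]_b)=g(s_k)$ for the fixed function $g(q):=\tau(\delta(q,Q))$. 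A fixed function of an ultimately periodic sequence is ultimately periodic, with preperiod at most $k_0$ and period dividing $t$.
\end{enumerate}

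The only delicate point is the representation convention in the first step. This includes the degenerate case where $P$ is empty or consists only of zeros, so that $P\,0^k\,Q$ carries leading zeros. I would handle this by invoking the standard fact that every $b$-automatic sequence has a DFAO satisfying $\delta(q_0,0)=q_0$, which makes the output independent of leading zeros. With that automaton the argument above goes through unchanged. If instead the DFAO reads least significant digit first, the same orbit argument applies to the reversed word $Q^R\,0^k\,P^R$, so the conclusion holds in either reading direction.
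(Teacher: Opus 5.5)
Your proposal is correct and is essentially the paper's proof: fix the state reached after reading $P$, observe that iterating $q\mapsto\delta(q,0)$ on a finite state set yields an ultimately periodic orbit, and push that orbit through the fixed map $q\mapsto\tau(\delta(q,Q))$. Your explicit pigeonhole bound and the leading-zero discussion go slightly beyond the paper's sketch. The degenerate case needs no special automaton, though: when $P$ is empty or all zeros, the integer $[P\,0^k\,Q]_b$ does not depend on $k$, so the sequence is simply constant.
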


\begin{proof}
Let $(\mathcal Q,\Sigma,\delta,q_0,\tau)$ be the DFAO.
After reading the prefix $P$, the automaton is in a fixed state $q_P$.
Reading $0^k$ applies the map $q\mapsto\delta(q,0)$ exactly $k$ times,
producing a sequence of states that is ultimately periodic
(since the state set is finite).
Reading the suffix $Q$ and applying $\tau$ preserves this periodicity.
\end{proof}

\subsection{Universal $r$-regularity}

Set $d(0):=1$, so that $a(n)=\sum_{k=0}^{n-1}d(k)$ for all $n\ge 1$.

\begin{theorem}\label{thm:structural_regularity}
For every $r\ge 2$, $d(n)$ is $r$-automatic and $(a(n))$ is $r$-regular.
\end{theorem}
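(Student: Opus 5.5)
The plan is to verify the hypothesis of Theorem~\ref{thm:AS} for $U=d$, with a bounded window. Once $d$ is known to be $r$-automatic, the regularity of $a$ follows from $a(n)=\sum_{k=0}^{n-1}d(k)$: summatory functions of $r$-automatic (indeed $r$-regular) sequences are $r$-regular by the closure properties in~\cite{AS92}.

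\textbf{Step 1: locate $rn+i$ relative to an anchor.} By Lemma~\ref{lem:nested-anchor}, $S_{S_n}=rn-R_n$ with $0\le R_n\le r(r-1)/2$. So for $0\le i<r$ we write
\[
rn+i=S_A+X,\qquad A:=S_n,\quad X:=R_n+i\in[0,\,r(r-1)/2+r-1].
\]
The offset $X$ is uniformly bounded. Moreover $R_n$ is a function of $d(S_n-1),\dots,d(S_n-r+1)$.

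\textbf{Step 2: apply the local projection.} By Lemma~\ref{lem:local_projection}(i), $d(S_A+X)$ is determined by $L_A,\dots,L_{A+X}$. By Lemma~\ref{lem:runlength-d}, each $L_{A+j}=\sum_{t=0}^{r-1}d(A+j-t)$. Hence $d(rn+i)$ is a function of $d$ on the window $[S_n-r+1,\;S_n+C]$ for a constant $C$ depending only on $r$.

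\textbf{Step 3: return to a window around $n$.} This is the main obstacle, since the data now sit near $S_n\approx rn$ rather than near $n$. I would iterate the same mechanism, but this time expressing $d$ near $S_n$ through the runs around value $n$. By Theorem~\ref{thm:global_run}(iv), positions near $S_n$ lie in runs of values $n+c$ with $|c|$ bounded, because every run has length between $1$ and $r$. Specifically:
\begin{itemize}[label=$\bullet$]
\item For $k\ge 0$, Lemma~\ref{lem:local_projection}(i) with $A=n$ shows that $d(S_n+k)$ depends only on $L_n,\dots,L_{n+k}$.
\item For $k\ge 1$, Lemma~\ref{lem:local_projection}(ii) shows that $d(S_n-k)$ depends only on $L_{n-1},\dots,L_{n-k+1}$, with $d(S_n-1)=1$.
\end{itemize}
Each such $L_m$ is again a sum of $r$ consecutive values of $d$ near $m$, by Lemma~\ref{lem:runlength-d}. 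Composing, $d(rn+i)=f_i\bigl(d(n-a),\dots,d(n+b)\bigr)$ with $a,b$ depending only on $r$ (roughly $a\approx 2r$ and $b\approx r^2$). This holds for $n\ge n_0$, where $n_0$ is large enough that all indices are $\ge 3$ and the lemmas apply.

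The delicate points are checking that the finite alphabet is respected and that the composed function is genuinely well defined. The $f_i$ are obtained from the finitely many possible values of the window, so they are well defined as long as the determinations in Lemma~\ref{lem:local_projection} are exact, which they are. Theorem~\ref{thm:AS} then gives that $d$ is $r$-automatic, and hence that $a$ is $r$-regular.
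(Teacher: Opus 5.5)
Your proposal is correct and follows essentially the same route as the paper: both write $rn+i=S_{S_n}+(R_n+i)$ via Lemma~\ref{lem:nested-anchor}, read $d$ there from $L_{S_n},\dots,L_{S_n+D}$ by Lemma~\ref{lem:local_projection}(i), pull everything back to a window of $d$ around $n$ using Lemma~\ref{lem:local_projection}(i)--(ii) at $A=n$ together with Lemma~\ref{lem:runlength-d}, and then invoke Theorem~\ref{thm:AS} and the summation closure of~\cite{AS92}. The paper's explicit window is $d(n-(2r-3)),\dots,d(n+r(r+1)/2-1)$, so your $b$ is closer to $r^2/2$ than $r^2$; this does not affect the argument.
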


\begin{proof}
Set $C:=2r-3$, $D:=r(r+1)/2-1$, $n_0:=\max(r+1,2r-3)$.
Fix $n\ge n_0$ and $0\le i<r$.

\textit{From scale $rn+i$ to scale $S_n$.}
By Lemma~\ref{lem:nested-anchor}, $S_{S_n}=rn-R_n$ with $0\le R_n\le r(r-1)/2$.
Since $R_n=\sum_{j=1}^{r-1}(r-j)\,d(S_n-j)$, the shift $R_n$ is itself
determined by the local differences $d(S_n-1),\ldots,d(S_n-r+1)$, which
by Lemma~\ref{lem:local_projection}$(ii)$ depend only on the run lengths
$(L_{n-1},\ldots,L_{n-r+2})$.
Set $X:=R_n+i\in[0,D]$.
Then $d(rn+i)=d(S_{S_n}+X)$, and by Lemma~\ref{lem:local_projection}$(i)$ at $A=S_n$,
this is determined by $(L_{S_n},\ldots,L_{S_n+D})$.

\textit{From scale $S_n$ to scale $n$.}
For $0\le k\le D$, Lemma~\ref{lem:runlength-d} gives
$L_{S_n+k}=\sum_{j=0}^{r-1}d(S_n+k-j)$,
so we need $d(S_n+x)$ for $x\in[-(r-1),D]$.
For $x\ge 0$: Lemma~\ref{lem:local_projection}$(i)$ at $A=n$ gives this from
$(L_n,\ldots,L_{n+D})$.
For $x=-1$: $d(S_n-1)=1$.
For $x=-1-Y$ with $1\le Y\le r-2$: Lemma~\ref{lem:local_projection}$(ii)$ at $A=n$
gives this from $(L_{n-1},\ldots,L_{n-Y})$.
All needed $L_m$ have index $m\ge n-r+2\ge 3$ (since $n\ge n_0$), so
Lemma~\ref{lem:runlength-d} applies: $L_m=\sum_{j=0}^{r-1}d(m-j)$,
determined by $d(n-C),\ldots,d(n+D)$.

\textit{Conclusion.}
There exists a Boolean function $\varphi_i:\{0,1\}^{C+D+1}\to\{0,1\}$ such that
\[
  d(rn+i)=\varphi_i\bigl(d(n-C),\ldots,d(n+D)\bigr)\qquad(n\ge n_0).
\]
By Theorem~\ref{thm:AS}, $d$ is $r$-automatic.
Since $a(n)=\sum_{k=0}^{n-1}d(k)$, the sequence $(a(n))$ is $r$-regular
by~\cite[Theorem~2.5]{AS92}.
\end{proof}

\begin{remark}
The proof is qualitative: it produces a finite automaton for $d$ but does not
minimise it. The explicit automata for $r\le 5$ (Sections~\ref{sec:r2}--\ref{sec:r5}
and Appendix~\ref{app:dfao}) are much smaller, which is why the
constructive cases remain informative even after Theorem~\ref{thm:structural_regularity}.
For $r\ge 6$, what remains open is the existence of compact denesting formulas
$a(rn+i)=T_r(n+s_i)+\delta_i+\varepsilon_i(n)$ with few $r$-automatic sequences.
\end{remark}

\section{The case $r=2$}\label{sec:r2}

\subsection{Definition and dyadic structure}

Throughout the paper we use the phrase \emph{$r$-adic recurrence} (or
\emph{dyadic} for $r=2$, \emph{triadic} for $r=3$, \emph{quaternary} for $r=4$)
to mean a recurrence that decomposes $n$ according to its base-$r$ representation,
of the form $a(rn+i) = f_i(a(n), a(n\pm 1), \ldots)$. This usage is unrelated to
$p$-adic numbers.

The almost Golomb sequence of order $2$ is defined by $a(0)=0$, $a(1)=1$ and,
for $n\ge 2$, $a(n)$ is the smallest integer $\ge a(n-1)$ such that
\begin{equation}\label{eq:r2def}
  a\bigl(a(n)+a(n-1)\bigr) = n.
\end{equation}
Set $S_n = a(n)+a(n-1)$ for $n\ge 1$, so the defining property reads $a(S_n)=n$.
By Lemma~\ref{lem:anchor-growth}, the indices $S_n$ are pairwise distinct.
This sequence is \oeis{A394217} in the OEIS. The first terms are
\[
  1,2,2,3,4,4,5,6,7,7,8,8,9,10,11,12,13,13,14,14,15,15,16,16,\ldots
\]

\subsection{The explicit dyadic formula}

We derive a closed-form expression for $a(n)$ in each
\emph{dyadic block} $[2^k, 2^{k+1})$, i.e.\ the set of integers
$n$ satisfying $2^k \le n < 2^{k+1}$.

\begin{proposition}\label{prop:dyadic}
  For $k\ge 2$ and $n = 2^k+j$ with $0\le j < 2^k$,
  \[
    a(n) = \begin{cases}
      3\cdot 2^{k-2} + \lceil j/2\rceil & \text{if } 0\le j\le 2^{k-1},\\
      n - 2^{k-1}                       & \text{if } 2^{k-1} < j < 2^k.
    \end{cases}
  \]
\end{proposition}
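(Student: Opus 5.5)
The plan is a strong induction on $n$, driven by the run structure of Theorem~\ref{thm:global_run}. By Lemma~\ref{lem:anchor-characterization}, $a(n)=m$ exactly when $S_m\le n<S_{m+1}$, where $S_m=a(m)+a(m-1)$. For $n$ in the block $[2^k,2^{k+1})$, the candidate values $m$ satisfy $m<n$; in fact they lie in $[3\cdot 2^{k-2},\,3\cdot 2^{k-1}]$. So $S_m$ and $S_{m+1}$ involve only earlier terms $a(m),a(m-1),a(m+1)$ with indices below $n$, and we may assume the formula for those indices.

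\textbf{Base case and inductive hypothesis.} We check $k=2$ (and $k=3$ for safety) directly from the listed initial terms $1,2,2,3,4,4,5,6,7,7,8,8,9,\ldots$. We then assume the formula for all blocks below the current $n$. The relevant indices $m$ fall into two regimes, and in each we compute the anchor positions $S_m$ in closed form.

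\textbf{Regime 1: $m$ in the second half of block $k-1$.} Write $m=2^{k-1}+j'$ with $2^{k-2}<j'<2^{k-1}$. Here $a(m)=m-2^{k-2}$. We also have $a(m-1)=m-1-2^{k-2}$, including at the boundary $j'=2^{k-2}+1$, where the first-half formula gives the same value $2^{k-1}$. Hence
\[
S_m=2m-1-2^{k-1},
\]
so each such value has a run of length $2$.

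\textbf{Regime 2: $m=2^k+j$ with $0\le j\le 2^{k-1}$.} Here $a(m)=3\cdot2^{k-2}+\lceil j/2\rceil$. For $j\ge1$ this gives $S_m=3\cdot 2^{k-1}+j$, using $\lceil j/2\rceil+\lceil (j-1)/2\rceil=j$. For $j=0$ it gives $S_{2^k}=3\cdot2^{k-1}-1$. In particular, values in this range have runs of length $1$.

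\textbf{The starting anchor.} We also need $m_0=3\cdot 2^{k-2}$, i.e.\ $j'=2^{k-2}$, which lies in the first-half case of block $k-1$. There $a(m_0)=a(m_0-1)=2^{k-1}$, so $S_{m_0}=2^k$. This shows the block $[2^k,2^{k+1})$ begins with the value $3\cdot 2^{k-2}$.

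\textbf{Assembling the runs.} Starting at $2^k$, the values $m_0,\,m_0+1,\ldots,2^k-1$ occupy runs of length $2$. The last of these ends at $S_{2^k}-1=3\cdot2^{k-1}-2$, which covers $j=0,\ldots,2^{k-1}-2$ with value $3\cdot2^{k-2}+\lceil j/2\rceil$. Next, the value $2^k$ occupies $[3\cdot2^{k-1}-1,\,3\cdot2^{k-1}]$, i.e.\ $j=2^{k-1}-1$ and $j=2^{k-1}$; this agrees with the ceiling formula there, and explains why the first case extends up to $j=2^{k-1}$ inclusive. Finally, for $m=2^k+j$ with $j\ge1$, the runs of length $1$ at positions $3\cdot2^{k-1}+j$ give $a(n)=n-2^{k-1}$ for $2^{k-1}<j<2^k$. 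This uses only $m\le 2^k+2^{k-1}-1$, which stays within the range where the inductive hypothesis applies.

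\textbf{Main obstacle.} The delicate part is the seams: the boundaries $j'=2^{k-2}$ and $j'=2^{k-2}+1$ in block $k-1$, and the value $m=2^k$. At these points the two cases of the formula meet, and the parity of $j$ in $\lceil j/2\rceil$ must line up exactly with the anchor positions. I would check these few boundary positions explicitly. I would also verify that every index used satisfies $m<n$, so that the strong induction is legitimate.
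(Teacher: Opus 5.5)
Your argument is correct in substance, but it runs in the opposite direction from the paper's proof. The paper never invokes the run structure. It defines $\widetilde a$ by the displayed formula and checks, for every $n\ge 2$, that $T_n=\widetilde a(n)+\widetilde a(n-1)$ satisfies $\widetilde a(T_n)=n$ and $\widetilde a(T_n-1)=n-1$. The first identity is the defining equation; the second, by monotonicity, gives greedy minimality; uniqueness then yields $\widetilde a=a$. You instead take Theorem~\ref{thm:global_run}(iv) and Lemma~\ref{lem:anchor-characterization} as given, and fill in each block $[2^k,2^{k+1})$ by strong induction from closed forms for the anchors $S_m$.

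The arithmetic is the same in both. Your Regime~2 is the paper's Cases~1--2, your Regime~1 is its Case~3 shifted down one block, and your starting anchor is the endpoint of its Case~2, also one block down. The trade-off differs, though. The paper checks a fully specified candidate, so it needs no bookkeeping of which indices are already known and relies only on uniqueness. Your version derives the formula rather than verifying a guess, and it gives the multiplicity pattern of Proposition~\ref{prop:N2_multiplicity} for free. The cost is that it leans on the structural part of Theorem~\ref{thm:global_run}, and you must check that every index used is below $n$. That check only holds from $k\ge 3$: at $k=2$, computing $S_{a(n)+1}$ already needs $a(n)$. This is why your separate $k=2$ base case is genuinely needed.

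There are two slips in your run-length bookkeeping. First, the run of $m_0=3\cdot 2^{k-2}$ has length $1$, not $2$: your own Regime~1 formula gives $S_{m_0+1}=2(m_0+1)-1-2^{k-1}=2^k+1$. Second, in Regime~2 the run of $2^k$ has length $2$, not $1$, since $S_{2^k+1}-S_{2^k}=2$; you do use this correctly later. As written, ``$m_0,\ldots,2^k-1$ occupy runs of length $2$'' means $2^{k-2}$ runs of length $2$ starting at $2^k$, ending at $3\cdot 2^{k-1}-1$. That contradicts your (correct) endpoint $S_{2^k}-1=3\cdot 2^{k-1}-2$.

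The correct assembly is as follows. The value $m_0$ occupies $\{2^k\}$. Each $m$ with $m_0<m\le 2^k$ occupies $\{2m-1-2^{k-1},\,2m-2^{k-1}\}$, that is, $j\in\{2(m-m_0)-1,\,2(m-m_0)\}$, which is exactly what $\lceil j/2\rceil$ encodes. Each $m=2^k+i$ with $1\le i<2^{k-1}$ occupies the single position $3\cdot 2^{k-1}+i$.
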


\begin{proof}
  We prove that the displayed formula defines the almost Golomb sequence of order $2$,
  i.e.\ the earliest monotone sequence satisfying \eqref{eq:r2def}.

  Let $\widetilde a(0)=0$, $\widetilde a(1)=1$, and define $\widetilde a(n)$ for
  $n\ge 2$ by the displayed dyadic formula (with the convention
  $\widetilde a(2)=\widetilde a(3)=2$ when $k=1$).
  We show that $\widetilde a$ satisfies the greedy rule \eqref{eq:r2def}; hence
  $\widetilde a=a$.

  First, $\widetilde a$ is nondecreasing and takes positive integer values,
  by inspection of the formula on each dyadic block.

  Fix $n\ge 2$ and set $T_n := \widetilde a(n)+\widetilde a(n-1)$.
  We claim
  \begin{equation}\label{eq:r2_anchor}
    \widetilde a(T_n)=n
    \qquad\text{and}\qquad
    \widetilde a(T_n-1)=n-1.
  \end{equation}
  This will imply both that $\widetilde a$ satisfies the implicit equation
  $\widetilde a(\widetilde a(n)+\widetilde a(n-1))=n$ and that the choice
  $\widetilde a(n)$ is minimal among all $m\ge \widetilde a(n-1)$ achieving it.

  Write $n=2^k+j$ with $k\ge 2$ and $0\le j<2^k$.
  We treat three cases.

  \textit{Case 1: $j=0$.} Then $\widetilde a(n)=3\cdot 2^{k-2}$ and
  $n-1=2^k-1$ lies in the second half of the previous dyadic block, so
  $\widetilde a(n-1)=(n-1)-2^{k-2}=3\cdot 2^{k-2}-1$,
  hence $T_n=3\cdot 2^{k-1}-1$.
  Now $T_n=2^k+(2^{k-1}-1)$ lies in the first half of $[2^k,2^{k+1})$, so
  \[
    \widetilde a(T_n)=3\cdot 2^{k-2}+\Bigl\lceil\frac{2^{k-1}-1}{2}\Bigr\rceil
    =3\cdot 2^{k-2}+2^{k-2}=2^k=n.
  \]
  Moreover $T_n-1=2^k+(2^{k-1}-2)$ is still in the first half, hence
  \[
    \widetilde a(T_n-1)=3\cdot 2^{k-2}+\Bigl\lceil\frac{2^{k-1}-2}{2}\Bigr\rceil
    =3\cdot 2^{k-2}+(2^{k-2}-1)=2^k-1=n-1.
  \]

  \textit{Case 2: $1\le j\le 2^{k-1}$.} Both $n$ and $n-1$ lie in the first half
  of the same dyadic block, so
  $\widetilde a(n)=3\cdot 2^{k-2}+\lceil j/2\rceil$ and
  $\widetilde a(n-1)=3\cdot 2^{k-2}+\lceil(j-1)/2\rceil$.
  Using $\lceil j/2\rceil+\lceil(j-1)/2\rceil=j$, we get $T_n=3\cdot 2^{k-1}+j$.
  If $j<2^{k-1}$, then $T_n=2^k+(2^{k-1}+j)$ lies in the second half of
  $[2^k,2^{k+1})$, giving $\widetilde a(T_n)=T_n-2^{k-1}=2^k+j=n$ and
  $\widetilde a(T_n-1)=n-1$.
  If $j=2^{k-1}$, then $T_n=2^{k+1}$ and
  $\widetilde a(T_n)=\widetilde a(2^{k+1})=3\cdot 2^{k-1}=2^k+2^{k-1}=n$;
  while $T_n-1=2^{k+1}-1$ lies in the second half of $[2^k,2^{k+1})$ and
  $\widetilde a(T_n-1)=(2^{k+1}-1)-2^k=2^k-1=n-1$.

  \textit{Case 3: $2^{k-1}<j<2^k$.} Write $j=2^{k-1}+u$ with $1\le u\le 2^{k-1}-1$.
  Both $n$ and $n-1$ lie in the second half of $[2^k,2^{k+1})$, so
  $\widetilde a(n)=n-2^{k-1}$ and $\widetilde a(n-1)=(n-1)-2^{k-1}$.
  Hence $T_n=2n-1-2^k=2^{k+1}+(2u-1)$.
  Since $1\le 2u-1\le 2^k-3$, this lies in the first half of $[2^{k+1},2^{k+2})$, and
  \[
    \widetilde a(T_n)=3\cdot 2^{k-1}+\Bigl\lceil\frac{2u-1}{2}\Bigr\rceil
    =3\cdot 2^{k-1}+u=2^k+j=n,
  \]
  and similarly $\widetilde a(T_n-1)=3\cdot 2^{k-1}+(u-1)=n-1$.

  This proves \eqref{eq:r2_anchor} for all $n\ge 2$.
  Hence $\widetilde a(\widetilde a(n)+\widetilde a(n-1))=n$.
  Moreover, for any $m$ with $\widetilde a(n-1)\le m\le \widetilde a(n)-1$,
  we have $m+\widetilde a(n-1)\le T_n-1$, hence
  $\widetilde a(m+\widetilde a(n-1))\le \widetilde a(T_n-1)=n-1$,
  so $m$ cannot satisfy \eqref{eq:r2def}.
  Therefore $\widetilde a(n)$ is the smallest integer $\ge \widetilde a(n-1)$
  satisfying \eqref{eq:r2def}, which is exactly the greedy rule.
  Hence $\widetilde a=a$.
\end{proof}

Specialising to the endpoints of dyadic blocks gives two exact values.

\begin{corollary}\label{cor:pivots}
  For all $k\ge 2$: $a(2^k) = 3\cdot 2^{k-2}$ and $a(3\cdot 2^{k-1}) = 2^k$.
\end{corollary}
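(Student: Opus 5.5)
This is a direct specialisation of Proposition~\ref{prop:dyadic} at two positions in the block $[2^k,2^{k+1})$. Fix $k\ge 2$ and write $n=2^k+j$ with $0\le j<2^k$.

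For the first identity, take $n=2^k$, so $j=0$. This lies in the first branch of the formula, since $0\le j\le 2^{k-1}$. That branch gives $a(2^k)=3\cdot 2^{k-2}+\lceil 0/2\rceil=3\cdot 2^{k-2}$.

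For the second identity, take $n=3\cdot 2^{k-1}=2^k+2^{k-1}$, so $j=2^{k-1}$. This is the boundary value of the first branch, which the proposition includes because its condition is $0\le j\le 2^{k-1}$. The first branch gives
\[
a(3\cdot 2^{k-1})=3\cdot 2^{k-2}+\lceil 2^{k-1}/2\rceil=3\cdot 2^{k-2}+2^{k-2}=2^k .
\]
Here $2^{k-1}$ is even for $k\ge 2$, so the ceiling is exactly $2^{k-2}$.

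The only point that needs attention is this boundary case, since at $j=2^{k-1}$ one must use the first branch rather than the second. As a consistency check, the second formula $n-2^{k-1}$, evaluated at $j=2^{k-1}$, would also give $2^k$. So the two branches agree at the junction, and the value is unambiguous.

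Alternatively, the second identity follows from Case~2 of the proof of Proposition~\ref{prop:dyadic}. With $j=2^{k-1}$, that case shows $T_n=2^{k+1}$ and $a(2^{k+1})=3\cdot 2^{k-1}$. This recovers the first identity at level $k+1$ and again confirms that $a(3\cdot 2^{k-1})=2^k$.
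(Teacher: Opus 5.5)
Your proof is correct and is exactly the paper's argument: the corollary is Proposition~\ref{prop:dyadic} evaluated at $j=0$ and at the included boundary $j=2^{k-1}$ of the first branch. Your closing ``alternative'' via Case~2 is only a consistency check, not an independent derivation, because computing $T_n=2^{k+1}$ there already uses $a(3\cdot 2^{k-1})=2^k$; the main argument does not need it.
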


\subsection{The denesting theorem}

The closed form yields the denesting formulas by a direct comparison.

\begin{theorem}\label{thm:r2}
  The almost Golomb sequence of order $2$ satisfies
  \begin{align}
    a(2n)   &= a(n)+a(n+1)-1, \qquad (n\ge 1), \label{eq:r2even}\\
    a(2n+1) &= a(n)+a(n+1), \qquad (n\ge 2).   \label{eq:r2odd}
  \end{align}
  In particular $(a(n))$ is $2$-regular in the sense of Allouche--Shallit~\cite{AS92}.
\end{theorem}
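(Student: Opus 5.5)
We plan to verify both identities directly from the closed form of Proposition~\ref{prop:dyadic}, checking the finitely many small values of $n$ by hand from the listed initial terms. Write $n=2^k+j$ with $k\ge 2$ and $0\le j<2^k$. Then $2n=2^{k+1}+2j$ and $2n+1=2^{k+1}+2j+1$ both lie in the block $[2^{k+1},2^{k+2})$. Their positions relative to the midpoint $2^{k}$ of that block follow those of $j$ relative to $2^{k-1}$: $2j\le 2^k$ iff $j\le 2^{k-1}$, and $2j+1\le 2^k$ iff $j<2^{k-1}$. So we split into cases according to where $j$ sits, and also note where $n+1$ lands.

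\textit{Case A: $0\le j<2^{k-1}$.} Here $n$ and $n+1$ both lie in the first half of the block of $n$. Hence $a(n)+a(n+1)=3\cdot 2^{k-1}+\lceil j/2\rceil+\lceil (j+1)/2\rceil=3\cdot 2^{k-1}+j+1$, using the identity $\lceil x/2\rceil+\lceil (x+1)/2\rceil=x+1$ already exploited in the proof of Proposition~\ref{prop:dyadic}. On the other side, $a(2n)=3\cdot 2^{k-1}+\lceil 2j/2\rceil=3\cdot 2^{k-1}+j$ and $a(2n+1)=3\cdot 2^{k-1}+\lceil(2j+1)/2\rceil=3\cdot 2^{k-1}+j+1$. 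Both identities hold.

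\textit{Case B: $j=2^{k-1}$.} Now $a(n)=3\cdot 2^{k-2}+2^{k-2}=2^k$, while $n+1$ lies in the second half, so $a(n+1)=n+1-2^{k-1}=2^k+1$. The right-hand sides are therefore $2^{k+1}$ and $2^{k+1}+1$. For the left-hand sides, $2n=2^{k+1}+2^k$ sits exactly at the midpoint, giving $a(2n)=3\cdot 2^{k-1}+2^{k-1}=2^{k+1}$. Also $2n+1$ is in the second half, giving $a(2n+1)=2n+1-2^k=2^{k+1}+1$. Both identities hold.

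\textit{Case C: $2^{k-1}<j<2^k$.} If $j<2^k-1$, then $n$ and $n+1$ are both in the second half, so $a(n)+a(n+1)=2n+1-2^k$. Both $2n$ and $2n+1$ are in the second half of the larger block, so $a(2n)=2n-2^k$ and $a(2n+1)=2n+1-2^k$, which match. The boundary subcase $j=2^k-1$, where $n+1=2^{k+1}$ starts a new block, is the only delicate point. There $a(n+1)=3\cdot 2^{k-1}$ by Corollary~\ref{cor:pivots}, and $a(n)=n-2^{k-1}=3\cdot 2^{k-1}-1$, so $a(n+1)=a(n)+1$ still. Thus the sum is again $2a(n)+1=2n+1-2^k$ and the same conclusion holds.

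The main obstacle is just bookkeeping at the block boundaries, namely Case B and the subcase $j=2^k-1$. Beyond that, only the small cases $n\in\{1,2,3\}$ for \eqref{eq:r2even} and $n\in\{2,3\}$ for \eqref{eq:r2odd} need checking against the initial terms $1,2,2,3,4,4,5,\dots$. For example, $a(2)=a(1)+a(2)-1=2$ and $a(5)=a(2)+a(3)=4$. The restriction $n\ge 2$ in \eqref{eq:r2odd} reflects that $a(3)=2\ne a(1)+a(2)=3$. Finally, $2$-regularity follows because the $2$-kernel lies in the $\mathbb{Z}$-module spanned by the shifts $a(n+c)$ for bounded $c$ and the constant sequence, up to finitely many initial corrections. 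Alternatively, it can simply be quoted from Theorem~\ref{thm:structural_regularity}.
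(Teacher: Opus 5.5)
Your proof is correct and follows essentially the same route as the paper: a case analysis via Proposition~\ref{prop:dyadic} on the position of $j$ within its dyadic block, with the same delicate boundary cases $j=2^{k-1}$ and $j=2^k-1$, plus direct checks for small $n$. Your explicit check of $n=1$ in \eqref{eq:r2even} is a small gain in completeness, since the paper's proof only verifies $n\in\{2,3\}$.
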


\begin{proof}
  We use Proposition~\ref{prop:dyadic} throughout. Write $n=2^k+j$ with $k\ge 2$
  and $0\le j<2^k$. We treat four cases according to the position of $j$ in the
  dyadic block. The key identity used throughout is
  \begin{equation}\label{eq:ceil-cancel}
    \Bigl\lceil\frac{j}{2}\Bigr\rceil+\Bigl\lceil\frac{j+1}{2}\Bigr\rceil = j+1,
  \end{equation}
  valid for all integers $j\ge 0$.

  \textit{Case 1: $0\le j\le 2^{k-1}-1$ (strict first half).}
  Both $j$ and $j+1$ lie in $[0,2^{k-1}]$, so
  \[
    a(n)=3\cdot 2^{k-2}+\Bigl\lceil\tfrac{j}{2}\Bigr\rceil,\qquad
    a(n+1)=3\cdot 2^{k-2}+\Bigl\lceil\tfrac{j+1}{2}\Bigr\rceil.
  \]
  By \eqref{eq:ceil-cancel}: $a(n)+a(n+1)-1=3\cdot 2^{k-1}+j$ and
  $a(n)+a(n+1)=3\cdot 2^{k-1}+j+1$.
  Since $2j\in[0,2^k-2]$ (first half of block $k+1$):
  $a(2n)=3\cdot 2^{k-1}+j$. Since $2j+1\in[1,2^k-1]$ (first half):
  $a(2n+1)=3\cdot 2^{k-1}+j+1$.

  \textit{Case 2: $j=2^{k-1}$ (right endpoint of first half).}
  $a(n)=3\cdot 2^{k-2}+2^{k-2}=2^k$. Since $j+1=2^{k-1}+1>2^{k-1}$
  (second half): $a(n+1)=n+1-2^{k-1}=2^k+1$.
  Hence $a(n)+a(n+1)-1=2^{k+1}$ and $a(n)+a(n+1)=2^{k+1}+1$.
  Since $2j=2^k=2^{(k+1)-1}$ (boundary of first half in block $k+1$):
  $a(2n)=3\cdot 2^{k-1}+2^{k-1}=2^{k+1}$.
  Since $2j+1=2^k+1>2^k$ (second half): $a(2n+1)=2n+1-2^k=2^{k+1}+1$.

  \textit{Case 3: $2^{k-1}<j<2^k-1$ (strict second half).}
  Both $j,j+1>2^{k-1}$, so $a(n)=n-2^{k-1}$ and $a(n+1)=n+1-2^{k-1}$.
  Hence $a(n)+a(n+1)-1=2n-2^k$ and $a(n)+a(n+1)=2n-2^k+1$.
  Since $2j>2^k$ (second half of block $k+1$): $a(2n)=2n-2^k$
  and $a(2n+1)=2n+1-2^k$.

  \textit{Case 4: $j=2^k-1$ (last element of block $k$).}
  $a(n)=n-2^{k-1}=3\cdot 2^{k-1}-1$. Now $n+1=2^{k+1}$ starts block $k+1$
  with $j'=0$, giving $a(n+1)=3\cdot 2^{k-1}$.
  Hence $a(n)+a(n+1)-1=3\cdot 2^k-2$ and $a(n)+a(n+1)=3\cdot 2^k-1$.
  Since $2j=2^{k+1}-2>2^k$ (second half): $a(2n)=2n-2^k=3\cdot 2^k-2$
  and $a(2n+1)=2n+1-2^k=3\cdot 2^k-1$.

  The special case $k=1$ (i.e.\ $n\in\{2,3\}$) is verified directly:
  $a(4)=a(2)+a(3)-1=3$, $a(5)=a(2)+a(3)=4$,
  $a(6)=a(3)+a(4)-1=4$, $a(7)=a(3)+a(4)=5$;
  these match the sequence.
\end{proof}

\begin{remark}
  Formulas \eqref{eq:r2even}--\eqref{eq:r2odd} show that every subsequence in the
  $2$-kernel of $(a(n))$ (see the discussion preceding Theorem~\ref{thm:AS} for the terminology)
  can be expressed as a $\mathbb{Z}$-linear combination of
  $(a(n))$, $(a(n+1))$, and the constant sequence $1$ (the $-1$ in \eqref{eq:r2even}
  necessitates this constant generator). Hence $(a(n))$ is $2$-regular.
  Since $a(n)$ is unbounded it is not $2$-automatic.
\end{remark}

\subsection{A Mallows-type nested recurrence}

Golomb's sequence satisfies the Mallows recurrence
$G(n+1)=1+G(n+1-G(G(n)))$. The almost Golomb sequence of order~$2$
admits an analogous nested recurrence.

\begin{proposition}\label{prop:mallows-r2}
For every $n\ge 4$,
\begin{equation}\label{eq:r2-mallows}
a(n+1)
\;=\;
1+a\!\Bigl(n+1-a\bigl(a(n)+1\bigr)+a\bigl(a(n)-1\bigr)\Bigr).
\end{equation}
\end{proposition}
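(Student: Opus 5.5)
The plan is to prove \eqref{eq:r2-mallows} structurally from the run description of Theorem~\ref{thm:global_run}, not from the closed form of Proposition~\ref{prop:dyadic}. The key observation is that the inner correction is a run length. Put $m:=a(n)$. Lemma~\ref{lem:runlength-d} with $r=2$ gives
\[
  a(m+1)-a(m-1)=L_m=S_{m+1}-S_m\in\{1,2\}.
\]
The upper bound holds because $L_m=d(m)+d(m-1)$ and $d\in\{0,1\}$; the lower bound holds because $S_m$ is strictly increasing (Lemma~\ref{lem:anchor-growth}). So \eqref{eq:r2-mallows} is equivalent to
\[
  a(n+1)=1+a\bigl(n+1-L_{a(n)}\bigr).
\]
For $n\ge 4$ we have $m=a(n)\ge 3$ (from the initial terms $1,2,2,3,\dots$ and monotonicity), so Lemma~\ref{lem:runlength-d} and Lemma~\ref{lem:anchor-characterization} apply to $m$. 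In particular $S_m\le n<S_{m+1}$.

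I then split into two cases according to whether $n$ is the last position of the run of $m$.

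\emph{Case $n+1=S_{m+1}$.} Then $a(n+1)=m+1$, and the argument is $n+1-L_m=S_{m+1}-L_m=S_m$. Since $a(S_m)=m$, both sides equal $m+1$.

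\emph{Case $n+1<S_{m+1}$.} Then $a(n+1)=m$, and we must show $a(n+1-L_m)=m-1$. Since $n\ge S_m$ and $L_m\ge 1$, the argument satisfies $n+1-L_m\le n+1-1$; more precisely $n+1<S_{m+1}=S_m+L_m$ gives $n+1-L_m<S_m$. For the lower bound, $n\ge S_m$ gives $n+1-L_m\ge S_m+1-L_m\ge S_m-1$, using $L_m\le 2$. So the argument lies in $[S_m-1,\,S_m-1]\cup\{\dots\}\subseteq[S_m-1,S_m-1]$, i.e.\ it equals $S_m-1$ (the argument is forced into the interval $[S_m+1-L_m,\,S_m-1]$, which is $\{S_m-1\}$ or empty). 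Position $S_m-1$ is the last position of the run of $m-1$, so its value is $m-1$. Equivalently, $S_m-1\ge S_{m-1}$ because $L_{m-1}\ge 1$.

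The main obstacle is the boundary case $m=3$. Here Theorem~\ref{thm:global_run}(iv) is only stated for runs of values $\ge 3$, so I cannot invoke it for the run of $m-1=2$. I would handle it directly from the initial prefix: value $2$ occupies $[2,S_3-1]$, so $a(S_3-1)=2$ still holds. I would also verify the hypothesis $n\ge 4\Rightarrow a(n)\ge 3$ from the listed initial terms and monotonicity. Apart from this, the argument only uses $1\le L_m\le 2$, which is why it is special to $r=2$: for larger $r$ the run lengths can jump by more than one between consecutive runs, and the analogous identity would fail. As a sanity check I would confirm the identity for small $n$ (for example $n=4,\dots,10$) against the listed terms.
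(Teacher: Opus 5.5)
Your argument is correct and is essentially the paper's proof. Both reduce \eqref{eq:r2-mallows} to $a(n+1)=1+a\bigl(n+1-L_{a(n)}\bigr)$, where $L_m=a(m+1)-a(m-1)=d(m)+d(m-1)\in\{1,2\}$, and then verify the identity according to where $n$ sits in the run of $m=a(n)$. Your split on whether $n+1=S_{m+1}$ simply merges the paper's cases ``$N_2(m)=1$'' and ``$n$ is the second of two'' into one case. Your explicit checks that $a(n)\ge 3$ for $n\ge 4$ and that the boundary case $m=3$ works add a little rigor the paper leaves implicit.
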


\begin{proof}
Set $m:=a(n)$ and let $N_2(m):=a(m+1)-a(m-1)$ be the multiplicity of
the value~$m$, i.e.\ the length of the run of~$m$
(Lemma~\ref{lem:runlength-d} with $r=2$).
By Theorem~\ref{thm:global_run}, the differences $d\in\{0,1\}$ give
$N_2(m)=d(m-2)+d(m-1)\in\{1,2\}$.
It suffices to show
\begin{equation}\label{eq:r2-mallows-aux}
a(n+1)=1+a\bigl(n+1-N_2(m)\bigr),
\end{equation}
since substituting $N_2(m)=a(m+1)-a(m-1)$ with $m=a(n)$
yields~\eqref{eq:r2-mallows}.

If $N_2(m)=1$: the value $m$ occurs once, at position~$n$,
so $a(n+1)=m+1$ and $1+a(n)=1+m=m+1$.

If $N_2(m)=2$: the value $m$ occupies two consecutive positions.
If $n$ is the first, then $a(n-1)=m-1$ and $a(n+1)=m$,
giving $1+a(n-1)=m$.
If $n$ is the second, then $a(n-1)=m$ and $a(n+1)=m+1$,
giving $1+a(n-1)=m+1$.
\end{proof}

\begin{remark}
Since $a(n)<n$ for $n\ge 3$, the right-hand side of~\eqref{eq:r2-mallows}
involves only values $a(k)$ with $k\le n$.
Hence~\eqref{eq:r2-mallows} together with the initial conditions
$a(1)=1$, $a(2)=2$, $a(3)=2$, $a(4)=3$ provides an alternative
definition of the almost Golomb sequence of order $2$,
without reference to the greedy rule or the implicit
equation~\eqref{eq:r2def}.
\end{remark}

\section{The case $r=3$}\label{sec:r3}

\subsection{Definition and the correction sequence}

The almost Golomb sequence of order $3$ is defined by $a(1)=1$, $a(k)=0$ for $k<1$,
and for $n\ge 2$, $a(n)$ is the smallest integer $\ge a(n-1)$ such that
\begin{equation}\label{eq:r3def}
  a\bigl(a(n)+a(n-1)+a(n-2)\bigr)=n.
\end{equation}
Set $S_n:=a(n)+a(n-1)+a(n-2)$, so that $a(S_n)=n$ for all $n\ge 1$.
This sequence is \oeis{A394218} in the OEIS. It begins
$1,2,2,2,3,4,5,5,6,6,6,7,7,8,8,9,10,11,12,13,13,14,15,15,\ldots$

Define
\begin{equation}\label{eq:Ik}
  I_k=\left[\frac{11\cdot 3^k-1}{2},\,\frac{13\cdot 3^k-3}{2}\right]\cap\mathbb{Z},
  \qquad
  \mathcal{I}=\bigcup_{k\ge 0} I_k,
  \qquad
  \varepsilon(n)=\mathbf{1}_{\mathcal{I}}(n).
\end{equation}
Thus $I_0=\{5\}$ and $|I_k|=3^k$ for every $k\ge 0$.

\subsection{Automaticity of the correction sequence}

\begin{lemma}\label{lem:selfsim}
For every $k\ge 0$, $I_{k+1}=(3I_k+1)\cup(3I_k+2)\cup(3I_k+3)$.
\end{lemma}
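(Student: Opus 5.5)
This is a direct computation with interval endpoints: write $I_k=[\alpha_k,\beta_k]\cap\mathbb{Z}$, with
\[
  \alpha_k=\frac{11\cdot 3^k-1}{2},\qquad \beta_k=\frac{13\cdot 3^k-3}{2}.
\]
Both are integers, since $11\cdot 3^k$ and $13\cdot 3^k$ are odd. The strategy is to show that the three translates $3I_k+1$, $3I_k+2$, $3I_k+3$ together tile exactly the integer interval $[3\alpha_k+1,\,3\beta_k+3]$, and that this interval is $I_{k+1}$.

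First, the endpoints. We have
\[
  3\alpha_k+1=\frac{33\cdot 3^k-3}{2}+1=\frac{11\cdot 3^{k+1}-1}{2}=\alpha_{k+1},
\]
and
\[
  3\beta_k+3=\frac{39\cdot 3^k-9}{2}+3=\frac{13\cdot 3^{k+1}-3}{2}=\beta_{k+1}.
\]

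Second, the covering. For any integer interval $[\alpha,\beta]$, every integer $t\in[3\alpha+1,3\beta+3]$ can be written uniquely as $t=3m+i$ with $i\in\{1,2,3\}$ and $m=\lceil t/3\rceil-1$. Then $3m+1\le t\le 3m+3$, and the bounds on $t$ give $\alpha\le m\le\beta$. Conversely, every element $3m+i$ of the union clearly lies in $[3\alpha+1,3\beta+3]$. Hence
\[
  (3I_k+1)\cup(3I_k+2)\cup(3I_k+3)=[\alpha_{k+1},\beta_{k+1}]\cap\mathbb{Z}=I_{k+1}.
\]
As a consistency check, this union is disjoint with $3\cdot 3^k=3^{k+1}$ elements, matching $|I_{k+1}|=3^{k+1}$.

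There is no real obstacle. The only points needing care are the parity check that the endpoints are integers and the correct choice of residues $\{1,2,3\}$ rather than $\{0,1,2\}$, which is what produces the shift $+1$ in $\alpha_{k+1}=3\alpha_k+1$. The purpose of the lemma is presumably to feed Theorem~\ref{thm:AS}: it yields the recursion
\[
  \varepsilon(3n+i)=\varepsilon(n)\quad (i=1,2),\qquad \varepsilon(3n+3)=\varepsilon(n),
\]
for large $n$. This is immediate from the tiling once one also notes that the intervals $I_k$ are separated, since $\beta_k<\alpha_{k+1}$, so no spurious memberships arise.
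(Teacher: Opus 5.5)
Your proof is correct and matches the paper's argument: both rest on the endpoint recursions $\alpha_{k+1}=3\alpha_k+1$ and $\beta_{k+1}=3\beta_k+3$, and then conclude that the three translates tile $[3\alpha_k+1,\,3\beta_k+3]\cap\mathbb{Z}=I_{k+1}$. You also spell out two points the paper leaves implicit, namely that the endpoints are integers and that the union covers the whole interval.
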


\begin{proof}
With $\ell_k=(11\cdot 3^k-1)/2$ and $u_k=(13\cdot 3^k-3)/2$,
one has $\ell_{k+1}=3\ell_k+1$ and $u_{k+1}=3u_k+3$, giving
$I_{k+1}=[3\ell_k+1,3u_k+3]\cap\mathbb{Z}=(3I_k+1)\cup(3I_k+2)\cup(3I_k+3)$.
\end{proof}

\begin{proposition}\label{prop:eps-auto}
The sequence $\varepsilon$ satisfies $\varepsilon(n)=0$ for $n<5$, $\varepsilon(5)=1$,
and for every $m\ge 2$:
\begin{equation}\label{eq:eps-rec}
  \varepsilon(3m)=\varepsilon(m-1),\qquad
  \varepsilon(3m+1)=\varepsilon(m),\qquad
  \varepsilon(3m+2)=\varepsilon(m).
\end{equation}
In particular, $\varepsilon$ is $3$-automatic.
\end{proposition}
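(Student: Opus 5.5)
The plan is to reduce everything to Lemma~\ref{lem:selfsim} together with the observation that the intervals $I_k$ are pairwise disjoint, well separated, and lie in distinct ranges. First the initial values: $I_0=\{5\}$ and $\min I_1=\ell_1=16$. The left endpoints $\ell_k$ increase with $k$, so $\mathcal I\cap[1,15]=\{5\}$. This gives $\varepsilon(n)=0$ for $n<5$ and $\varepsilon(5)=1$.

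Next, the endpoint recursions $\ell_{k+1}=3\ell_k+1$ and $u_{k+1}=3u_k+3$ show that $u_k<\ell_{k+1}-1$. Indeed, $\ell_{k+1}-u_k=(33\cdot3^k-2-13\cdot3^k+3)/2=(20\cdot 3^k+1)/2>1$. Hence the $I_k$ are disjoint and separated by gaps.

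The core step is the claim that, for $m\ge2$ and $t\in\{1,2,3\}$,
\[
  3m'+t\in\mathcal I \iff m'\in\mathcal I, \qquad m'\ge 1 .
\]
Writing $n=3m+i$, the cases are: $i=1,2$ corresponds to $m'=m$ with $t=i$, and $i=0$ corresponds to $m'=m-1$ with $t=3$, since $3m=3(m-1)+3$. These three cases give exactly the three identities in~\eqref{eq:eps-rec}.

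For the direction ($\Leftarrow$): if $m'\in I_k$, then $3m'+t\in I_{k+1}$ by Lemma~\ref{lem:selfsim}.

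For the direction ($\Rightarrow$): suppose $3m'+t\in I_{k}$. If $k\ge1$, Lemma~\ref{lem:selfsim} writes $I_k$ as the disjoint union of $3I_{k-1}+1$, $3I_{k-1}+2$ and $3I_{k-1}+3$. Since $t\in\{1,2,3\}$ and the residues modulo~$3$ distinguish $t=1$, $t=2$ and $t=3$ ($3\equiv0$), the decomposition $3m'+t=3x+s$ with $s\in\{1,2,3\}$ is unique. So $m'=x\in I_{k-1}\subseteq\mathcal I$. The only remaining possibility is $k=0$, i.e.\ $3m'+t=5$, which forces $m'=1$, $t=2$. This is why the hypothesis $m\ge2$ is imposed. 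In the case $i=0$ we have $m'=m-1\ge1$, and $3m=5$ is impossible anyway. In the cases $i=1,2$ we have $m'=m\ge2$, so $3m'+t\ge7>5$. Thus the exceptional $k=0$ case never arises.

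Finally, automaticity follows either from Theorem~\ref{thm:AS} applied to $U=\varepsilon$ with window $U(n-1),U(n)$ and $n_0=2$, or directly by noting that the $3$-kernel is generated by the finitely many shifts $\varepsilon(n)$ and $\varepsilon(n-1)$ up to finitely many initial corrections. The main, though mild, obstacle is the careful handling of the exceptional base interval $I_0$ and of small $m$, which is exactly where the bound $m\ge2$ enters. Everything else is bookkeeping with Lemma~\ref{lem:selfsim}.
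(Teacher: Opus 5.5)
Your proof is correct and follows the paper's route: both reduce the three recurrences to Lemma~\ref{lem:selfsim}, and you also spell out what the paper's one-line proof leaves implicit, namely the converse direction via residues mod $3$, the exceptional component $I_0=\{5\}$ (which is exactly why $m\ge 2$ is needed, since $\varepsilon(5)\ne\varepsilon(1)$), and the initial values. One harmless arithmetic slip: $\ell_{k+1}-u_k=10\cdot 3^k+1$, not $(20\cdot 3^k+1)/2$; in any case this separation of the $I_k$ is never used, because the residue mod $3$ alone identifies the piece $3I_{k-1}+s$.
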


\begin{proof}
From Lemma~\ref{lem:selfsim}: $3m\in 3I_k+3\iff m-1\in I_k$, and
$3m+1\in 3I_k+1$, $3m+2\in 3I_k+2$ both reduce to $m\in I_k$.
Since $\varepsilon$ is binary and obeys the digit-by-digit recurrences
\eqref{eq:eps-rec}, it is generated by a finite automaton reading the base-$3$
expansion of $n$.
\end{proof}

\begin{proposition}\label{prop:eps-dfao}
The sequence $\varepsilon(n)=\mathbf{1}_{\mathcal{I}}(n)$ is generated by the following
DFAO, reading the base-$3$ expansion of $n$ from most significant to least significant digit.
Let $Q=\{q_0,q_1,q_2,c_{=},c_{>},c_{<},e_{=},e_{>},e_{<},r\}$, $\Sigma=\{0,1,2\}$,
initial state $q_0$, output map $\tau(c_{=})=\tau(c_{>})=\tau(e_{<})=1$ and $\tau(q)=0$
otherwise. The transition table is:
\[
\begin{array}{c|ccc}
      & 0 & 1 & 2\\ \hline
q_0   & r & q_1 & q_2\\
q_1   & r & r   & c_{=}\\
q_2   & e_{=} & r & r\\
c_{=} & c_{<} & c_{=} & c_{>}\\
c_{<} & c_{<} & c_{<} & c_{<}\\
c_{>} & c_{>} & c_{>} & c_{>}\\
e_{=} & e_{<} & e_{=} & e_{>}\\
e_{<} & e_{<} & e_{<} & e_{<}\\
e_{>} & e_{>} & e_{>} & e_{>}\\
r     & r & r & r
\end{array}
\]
For every $n\ge 1$, the automaton outputs $\varepsilon(n)=\mathbf{1}_{\mathcal{I}}(n)$.
\end{proposition}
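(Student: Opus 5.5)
The plan is to identify the base-$3$ expansions of the endpoints of $I_k$ explicitly. With that in hand, $\mathcal I$ becomes a simple lexicographic language that the displayed automaton visibly recognises. No use of the recurrences \eqref{eq:eps-rec} should be needed; Proposition~\ref{prop:eps-auto} only serves as a consistency check.

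\textbf{Endpoint expansions.} With $\ell_k=(11\cdot 3^k-1)/2$ and $u_k=(13\cdot 3^k-3)/2$, I claim that
\[
\ell_k=[1\,2\,1^k]_3 \quad (k\ge 0),\qquad u_0=[1\,2]_3,\qquad u_k=[2\,0\,1^{k-1}\,0]_3 \quad (k\ge 1).
\]
This follows by induction from $\ell_{k+1}=3\ell_k+1$ and $u_{k+1}=3u_k+3$ (Lemma~\ref{lem:selfsim}). Appending the digit $1$ gives $3\ell_k+1$. For $u_k$, we have $3u_k+3=3(u_k+1)$, and $u_k+1$ changes a final $0$ into a $1$. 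As base cases, $5=[12]_3$ and $18=[200]_3$. Consequently every element of $I_k$ has exactly $k+2$ base-$3$ digits. This is because $\ell_k\ge 3^{k+1}$ and $u_k<3^{k+2}$, which is clear from the leading digits $1$ and $2$. The intervals $I_k$ are therefore separated by length. For a word $w$ of length $k+2$, we get $[w]_3\in\mathcal I$ iff $12\,1^k\le_{\rm lex} w\le_{\rm lex} u_k$. Here lexicographic comparison coincides with numeric comparison at fixed length.

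\textbf{Splitting by prefix.} Write $w=xy$ with $|x|=2$. If $x\in\{0\ast,10,11\}$, then $w<\ell_k$. If $x=21$ or $x=22$, then $w>u_k$. These are exactly the transitions into the sink $r$, together with the leading $0$ from $q_0$, which never occurs for canonical expansions. If $x=12$, then $w\le u_k$ automatically, and membership reduces to $y\ge_{\rm lex}1^k$. If $x=20$ (which forces $k\ge1$ for membership), then $w\ge\ell_k$ automatically, and membership reduces to $y\le_{\rm lex}1^{k-1}0$. For $|y|=k\ge1$, the condition $y\le_{\rm lex} 1^{k-1}0$ is equivalent to $y<_{\rm lex}1^k$.

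\textbf{Checking the comparison states.} The states $c_{=},c_{<},c_{>}$ track the lexicographic comparison of $y$ with $1^{|y|}$, and $e_{=},e_{<},e_{>}$ do the same after the prefix $20$. Reading the digit $1$ preserves equality, reading $0$ or $2$ decides the comparison, and the decided states are absorbing. The outputs are then as required. After prefix $12$, the output is $1$ iff $y\ge 1^k$, that is, in state $c_{=}$ or $c_{>}$. After prefix $20$, the output is $1$ iff $y<1^k$, that is, in state $e_{<}$. This also correctly handles $k=0$: the word $20$ (the number $6$) ends in $e_{=}$ with output $0$, and $12$ (the number $5$) ends in $c_{=}$ with output $1$. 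Finally, one-digit inputs end in $q_1$, $q_2$ or $r$, all with output $0$, consistent with $\varepsilon(n)=0$ for $n<5$.

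\textbf{Main obstacle.} The only delicate point is the upper endpoint. One must verify the expansion $u_k=[2\,0\,1^{k-1}0]_3$ and then check that ``$\le 1^{k-1}0$'' coincides with ``$<1^k$''; the latter is what makes the single accepting state $e_{<}$ suffice. I would confirm both by the induction above and by spot checks, for instance $57=[2010]_3\in I_2$ and $58=[2011]_3\notin I_2$.
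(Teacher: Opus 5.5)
Your proposal is correct and follows the paper's own argument: write $\ell_k=[1\,2\,1^k]_3$ and $u_k=[2\,0\,1^{k-1}\,0]_3$, split by the two-digit prefix $12$ or $20$, and let the $c$- and $e$-states track the lexicographic comparison of the suffix with $1^k$. You also supply details the paper leaves implicit, and they all check out: the induction via Lemma~\ref{lem:selfsim} giving the endpoint expansions, the separate treatment of $k=0$, and the equivalence $y\le_{\rm lex}1^{k-1}0\iff y<_{\rm lex}1^k$, which is what justifies the single accepting state $e_{<}$.
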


\begin{proof}
The bounds $\ell_k=(11\cdot 3^k-1)/2=(12\underbrace{1\cdots1}_{k})_3$ and
$u_k=(13\cdot 3^k-3)/2=(20\underbrace{1\cdots1}_{k-1}0)_3$ show that
$n\in I_k$ iff either (i)~$\operatorname{rep}_3(n)=12u$ with $|u|=k$ and $u\ge_{\rm lex}1^k$,
or (ii)~$\operatorname{rep}_3(n)=20u$ with $|u|=k$ and $u<_{\rm lex}1^k$.
States $q_1,q_2$ detect prefixes $12,20$; states $c_{=},c_{>},c_{<}$ (resp.\ $e_{=},e_{>},e_{<}$)
track the lexicographic comparison of the suffix with $1^k$.
Accepting states $c_{=},c_{>}$ cover case~(i); $e_{<}$ covers case~(ii).
\end{proof}

\subsection{A strong gap lemma}

\begin{lemma}\label{lem:r3-gap}
For every $k\ge 2$, $\varepsilon(n)=0$ for all $3^k-1\le n\le 5\cdot 3^{k-1}$.
\end{lemma}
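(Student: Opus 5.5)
The plan is to compare the window $[3^k-1,\,5\cdot 3^{k-1}]$ directly with the intervals $I_j$ from \eqref{eq:Ik}. We will show that it lies strictly between two consecutive intervals, namely above $I_{k-2}$ and below $I_{k-1}$. Since $\varepsilon=\mathbf{1}_{\mathcal I}$ and $\mathcal I=\bigcup_j I_j$, this is all that is needed. The first step is to record that the intervals are ordered and pairwise disjoint. Indeed $u_j=(13\cdot 3^j-3)/2<(33\cdot 3^j-1)/2=\ell_{j+1}$. Hence a point strictly between $u_{k-2}$ and $\ell_{k-1}$ lies in no $I_j$: every $I_j$ with $j\le k-2$ ends at or before $u_{k-2}$, and every $I_j$ with $j\ge k-1$ starts at or after $\ell_{k-1}$.

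Next we verify the two endpoint inequalities. For the upper end we need $5\cdot 3^{k-1}<\ell_{k-1}=(11\cdot 3^{k-1}-1)/2$. This is equivalent to $10\cdot 3^{k-1}<11\cdot 3^{k-1}-1$, i.e. $3^{k-1}>1$, which holds for $k\ge 2$. For the lower end we need $3^k-1>u_{k-2}=(13\cdot 3^{k-2}-3)/2$. This is equivalent to $18\cdot 3^{k-2}-2>13\cdot 3^{k-2}-3$, i.e. $5\cdot 3^{k-2}+1>0$, which is always true.

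When $k=2$ the lower comparison involves $I_0=\{5\}$, and the window is $[8,15]$. Since $5<8$ and $\ell_1=16>15$, the window still avoids $\mathcal I$, consistent with the general inequalities. There is no real obstacle in this argument. The only delicate point is to check that the upper bound $5\cdot 3^{k-1}$ stays strictly below $\ell_{k-1}$ rather than touching it, and this is exactly where the hypothesis $k\ge 2$ enters: for $k=1$ we would get $5=\ell_0$, and the claim fails.

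As a cross-check, one could instead read the base-$3$ expansions through the DFAO of Proposition~\ref{prop:eps-dfao}. Every $n$ in the window has leading digits $10$, $11$, $12$ with a short tail, or is exactly $(12 0^{k-1})_3$. In each case the automaton ends in $r$ or in $c_{<}$, both of which output $0$. The interval comparison above is the cleaner argument, however, and is the one we would write up.
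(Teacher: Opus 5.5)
Your argument is correct, but it takes a different route from the paper.

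The paper proves the lemma by induction on $k$. It checks $\varepsilon(8)=\cdots=\varepsilon(15)=0$ by hand. For the inductive step it writes each $n$ in the rank-$(k+1)$ window as $n=3m+r$ and applies the $3$-adic recurrences \eqref{eq:eps-rec}: $\varepsilon(3m+1)=\varepsilon(3m+2)=\varepsilon(m)$ and $\varepsilon(3m)=\varepsilon(m-1)$. It then verifies, residue by residue, that $m$ (respectively $m-1$) falls back into the rank-$k$ window.

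You instead work directly with the explicit endpoints $\ell_j,u_j$. You show the window lies in the gap $(u_{k-2},\ell_{k-1})$. Since $\ell_j$ and $u_j$ are increasing in $j$, that gap meets no $I_j$. This is the same endpoint-comparison technique the paper itself uses later in Lemma~\ref{lem:gaps}.

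What your version buys:
\begin{itemize}
\item It needs no base-case computation and no range bookkeeping for the three residues.
\item It shows exactly where $k\ge 2$ enters.
\item It proves more: $\varepsilon$ vanishes on the whole gap, i.e.\ for $\tfrac{13\cdot 3^{k-2}-1}{2}\le n\le \tfrac{11\cdot 3^{k-1}-3}{2}$, which strictly contains $[3^k-1,\,5\cdot 3^{k-1}]$.
\end{itemize}

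What the paper's induction buys: it uses only the self-similar recurrences and the initial values, never the closed form of $\mathcal{I}$. That is the kind of argument that still works when a correction sequence is known only through its recurrences or automaton, as for $r=4,5$. Here, though, the support is explicit, so your direct comparison is the simpler proof.

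One small slip in your optional DFAO cross-check. The left endpoint $3^k-1=(2\cdots2)_3$ does not begin with the digit $1$, so it is not covered by your list of leading-digit cases. The automaton sends it to the state $r$ after reading the prefix $22$, so the output is still $0$ and the conclusion is unaffected.
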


\begin{proof}
By induction on $k$. The base case $k=2$ is $\varepsilon(8)=\cdots=\varepsilon(15)=0$,
verified directly. Assuming the result at rank $k$, let $3^{k+1}-1\le n\le 5\cdot 3^k$
and write $n=3m+r$.
If $r\in\{1,2\}$: $\varepsilon(n)=\varepsilon(m)$ and $3^k-1\le m\le 5\cdot 3^{k-1}$.
If $r=0$: $\varepsilon(n)=\varepsilon(m-1)$ and $3^k-1\le m-1\le 5\cdot 3^{k-1}-1$.
In all cases the hypothesis gives $\varepsilon=0$.
\end{proof}

\subsection{Explicit formulas on the first two thirds of each block}

\begin{lemma}\label{lem:r3-good-zone}
For every $k\ge 2$ and every $n$ with $3^k-1\le n\le 5\cdot 3^{k-1}$,
the almost Golomb sequence satisfies
\begin{align}
  a(3n)   &= a(n-2)+a(n-1)+a(n)+1, \label{eq:r3-good0}\\
  a(3n+1) &= a(n-1)+a(n)+a(n+1),   \label{eq:r3-good1}\\
  a(3n+2) &= a(n)+a(n+1)+a(n+2)-1. \label{eq:r3-good2}
\end{align}
\end{lemma}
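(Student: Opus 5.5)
The plan is to recognise the three right-hand sides as anchor positions and then use Lemma~\ref{lem:anchor-characterization}. Since $S_m=a(m)+a(m-1)+a(m-2)$ for $r=3$, the claimed identities read
\[
a(3n)=S_n+1,\qquad a(3n+1)=S_{n+1},\qquad a(3n+2)=S_{n+2}-1 .
\]
By Lemma~\ref{lem:anchor-characterization}, $a(K)=m$ is equivalent to $S_m\le K<S_{m+1}$. So each identity becomes a sandwich inequality between iterated anchors. For example, the middle one is equivalent to
\[
S_{S_{n+1}}\le 3n+1<S_{S_{n+1}+1}.
\]

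To evaluate these iterated anchors I will use Lemma~\ref{lem:nested-anchor}: $S_{S_m}=3m-R_m$ with $R_m=2d(S_m-1)+d(S_m-2)$. Since $d(S_m-1)=1$ (Lemma~\ref{lem:local_projection}(ii)), this gives $R_m=2+d(S_m-2)\in\{2,3\}$. I also need $S_{S_m+1}=S_{S_m}+L_{S_m}$ and $S_{S_m\pm1}$. These are expressed through run lengths via Lemma~\ref{lem:runlength-d}, $L_j=d(j)+d(j-1)+d(j-2)\in\{1,2,3\}$. The middle identity then reduces to a local condition. The left inequality $3(n+1)-R_{n+1}\le 3n+1$ always holds. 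The right inequality holds automatically when $R_{n+1}=2$. When $R_{n+1}=3$, i.e.\ $d(S_{n+1}-2)=1$ (the run of $n$ has length~$1$), it requires $L_{S_{n+1}}\ge 2$. The identities for $a(3n)$ and $a(3n+2)$ reduce similarly, using $S_n+1$ and $S_{n+2}-1$, i.e.\ the neighbours of the anchors. These neighbours lie inside or at the edge of the runs of $n-1,n$ and $n+1,n+2$. Each identity thus becomes a finite list of forbidden local patterns in $d$ (equivalently in the run lengths $L$) near positions $S_{n},S_{n+1},S_{n+2}$. By Lemma~\ref{lem:local_projection} these in turn are patterns in $d$ near $n$.

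The main obstacle is proving that the forbidden patterns never occur when $3^k-1\le n\le 5\cdot 3^{k-1}$. The exceptional set $\mathcal I$, where $\varepsilon=1$, is exactly where the paper's general formula carries a correction. So I expect the bad patterns to be precisely the local signature of being near $\mathcal I$. I would argue by induction on $k$, mirroring the proof of Lemma~\ref{lem:r3-gap}.

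First I verify the base case $k=2$ ($8\le n\le 15$) directly from the listed initial terms. Then I assume the three identities and the absence of bad patterns throughout the zone at level $k$. For $n'$ in the zone at level $k+1$, write $n'=3n+i$. As in Lemma~\ref{lem:r3-gap}, $n$ (or $n-1$) lies in the level-$k$ zone. The identities at level $k$ give the values $a(3n+i)$, hence $d$ on $[3n-3,3n+8]$, in terms of $d$ near $n$. This shows that the local pattern of $d$ near $n'$ is one of the images of admissible patterns near $n$, and a finite check shows these images are again admissible. Near the ends of the interval I may need the slightly wider window $[3^k-3,\,5\cdot 3^{k-1}+2]$ in the hypothesis. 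I expect the gap lemma's margin to absorb this, but I would check it first.
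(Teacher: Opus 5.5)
Your route is sound, but it differs from the paper's. The paper gives no independent argument. It reads the lemma off as the $\varepsilon\equiv 0$ case of Theorem~\ref{thm:r3}, whose proof carries the invariant $L_n\le 2+\varepsilon(n-1)$ through a strong induction on $n$, and then uses Lemma~\ref{lem:r3-gap} to kill the corrections on the zone. You instead prove the uncorrected identities directly by induction on the scale $k$, never introducing $\mathcal I$ or $\varepsilon$.

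Your ``finite check'' becomes very short once the forbidden patterns are named: they are exactly runs of length~$3$.
\begin{itemize}
\item The condition $L_{S_{n+1}}\ge 2$ you found for the middle identity is automatic. If $L_n=1$ then $a(S_{n+1}-2)=a(S_n-1)=n-1$, while $a(S_{n+1}+1)\ge n+1$.
\item The identity for $a(3n)$ holds iff $L_n\le 2$, and the one for $a(3n+2)$ holds iff $L_{n+1}\le 2$.
\item Taking differences of the identities at $n-1$ and $n$ gives $(d(3n-2),d(3n-1),d(3n))=(L_n-1,\,2-L_n,\,L_n-1)$. Hence $L_{3n-1}=L_{n-1}$, $L_{3n}=L_n$ and $L_{3n+1}=L_{n+1}$.
\end{itemize}
So the invariant ``$L_m\le 2$ for $3^k-1\le m\le 5\cdot 3^{k-1}+1$'' passes from level $k$ to level $k+1$ with no loss at either end. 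It uses exactly the identities on the level-$k$ zone, and the base case needs only $a(6),\dots,a(17)$.

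Do not widen the window as you suggest. At $k=2$ one has $L_6=L_{17}=3$, so the widened hypothesis is false there.

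Your approach buys independence from Theorem~\ref{thm:r3} and the whole $\varepsilon$-machinery, so Proposition~\ref{prop:r3-block} could be proved first. It also avoids needing $\varepsilon(3^k-2)=0$: the paper's deduction uses this at $n=3^k-1$, but it lies just outside the range of Lemma~\ref{lem:r3-gap}. The paper's approach buys brevity: since the full corrected denesting is needed anyway, the lemma costs one line.
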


\begin{proof}
This is the specialisation of Theorem~\ref{thm:r3}
(whose proof does not depend on the present lemma) to the zone
where $\varepsilon\equiv 0$: Lemma~\ref{lem:r3-gap} gives
$\varepsilon(n-1)=\varepsilon(n)=0$ for $3^k-1\le n\le 5\cdot 3^{k-1}$,
so \eqref{eq:r3a}--\eqref{eq:r3c} reduce to
\eqref{eq:r3-good0}--\eqref{eq:r3-good2}.
\end{proof}

\begin{proposition}\label{prop:r3-block}
For every $k\ge 2$, set $A_k:=a(3^k)$. Then $A_2=6$, $A_{k+1}=3A_k-1$, and
\[
  A_k=\frac{11\cdot 3^{k-2}+1}{2}.
\]
Moreover $a(3^k-2)=a(3^k-1)=A_k-1$, and for $0\le j\le 2\cdot 3^{k-1}$:
\begin{equation}\label{eq:r3-block}
a(3^k+j)=
\begin{cases}
  A_k+\left\lfloor \dfrac{j}{3}\right\rfloor,
    & 0\le j\le 3^{k-1}-1,\\[2ex]
  A_k+3^{k-2}+\left\lfloor \dfrac{j-3^{k-1}}{2}\right\rfloor,
    & 3^{k-1}\le j\le 2\cdot 3^{k-1}.
\end{cases}
\end{equation}
\end{proposition}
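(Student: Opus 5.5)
We argue by induction on $k\ge 2$, using Lemma~\ref{lem:r3-good-zone} as the engine. The statement carries three pieces of data at rank $k$: the value $A_k$, the two values $a(3^k-2)=a(3^k-1)=A_k-1$, and the explicit block formula~\eqref{eq:r3-block} on $0\le j\le 2\cdot 3^{k-1}$. The closed form $A_k=(11\cdot 3^{k-2}+1)/2$ follows from $A_2=6$ and $A_{k+1}=3A_k-1$ by a one-line check, so the real content is propagating the block data from rank $k$ to rank $k+1$. The base case $k=2$ (that is, $A_2=6$, $a(7)=a(8)=5$, and the formula for $9\le n\le 15$) is read off the listed initial terms; where convenient, $k=3$ is also checked by hand.

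For the inductive step we use the good zone $3^k-1\le n\le 5\cdot 3^{k-1}$, i.e.\ $n=3^k+j$ with $-1\le j\le 2\cdot 3^{k-1}$. There Lemma~\ref{lem:r3-good-zone} expresses $a(3n)$, $a(3n+1)$, $a(3n+2)$ through the window $a(n-2),\dots,a(n+2)$. The arguments $3n,3n+1,3n+2$ then sweep $[3^{k+1}-3,\,5\cdot 3^k+2]$, which contains $3^{k+1}-2$, $3^{k+1}-1$ and all of $[3^{k+1},\,3^{k+1}+2\cdot 3^k]$, exactly the rank-$(k+1)$ targets. For the window values we use the rank-$k$ formula, together with $a(3^k-2)=a(3^k-1)=A_k-1$ for the left edge. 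When $n=3^k-1$ we also need $a(3^k-3)$, which the recurrence $a(3n)$ requires. We obtain it either by observing that it equals $A_k-1$ or $A_k-2$ and tracking it as an extra inductive datum, or by shifting the range so that $n=3^k-1$ contributes only $3n+1$ and $3n+2$ to the targets. The second option is cleaner: $3^{k+1}-2$ and $3^{k+1}-1$ are $3n+1$ and $3n+2$ for $n=3^k-1$, and the formula~\eqref{eq:r3-good1} for $3n+1$ uses only $a(n-1),a(n),a(n+1)$, which are known.

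The computation then splits into the two regimes of~\eqref{eq:r3-block}. In the first third, $0\le j\le 3^{k-1}-1$, we have $a(3^k+j)=A_k+\lfloor j/3\rfloor$. Summing three or five consecutive such values gives $3A_k+j+c$ with a small offset $c$ depending on $j\bmod 3$. For example, $\lfloor (j-1)/3\rfloor+\lfloor j/3\rfloor+\lfloor (j+1)/3\rfloor=j$ for all $j$, which yields $a(3n+1)=3A_k+j$. With $\pm1$ adjustments this becomes $A_{k+1}+\lfloor J/3\rfloor$ for $J=3j+i$ in the first third at rank $k+1$, and also reaches slightly into the second third. In the second regime, $a(3^k+j)=A_k+3^{k-2}+\lfloor (j-3^{k-1})/2\rfloor$, and the sums of three consecutive half-floors are $\lfloor 3t/2\rfloor$-type expressions. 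These must be matched against $A_{k+1}+3^{k-1}+\lfloor (J-3^k)/2\rfloor$, using $3(A_k+3^{k-2})=A_{k+1}+3^{k-1}+1$.

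The main obstacle is bookkeeping at the seams: the transition $j=3^{k-1}-1\to 3^{k-1}$ inside the window of width five, the edges $j\in\{-1,0\}$, and the right end $j=2\cdot 3^{k-1}$, where the top target $3^{k+1}+2\cdot 3^k$ is reached. In the first third, each $j$ produces three outputs, but the rank-$(k+1)$ first third has length $3^k$ while the inputs with $j\le 3^{k-1}-1$ produce exactly $J\le 3^k-1$. So the regimes align, and only windows straddling $j=3^{k-1}$ mix the formulas. There we would verify the identities by direct substitution, splitting by the parities and residues of $j$: $j\bmod 6$ suffices for the half-floor sums, and $j\bmod 3$ for the third-floor sums. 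We expect every case to reduce to the identity $3A_k-1=A_{k+1}$ plus elementary floor identities.
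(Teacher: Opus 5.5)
Your route is the paper's: induction on $k$ driven by Lemma~\ref{lem:r3-good-zone}, with the window values read from the rank-$k$ block formula and checked by floor-sum identities. Your handling of the left edge is correct: at $n=3^k-1$ use only \eqref{eq:r3-good1}--\eqref{eq:r3-good2}, so $a(3^k-3)$ is never needed.

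There is, however, a genuine gap at the \emph{right} edge, and it cannot be sidestepped the same way. The target $a(3^{k+1}+2\cdot 3^k-1)=a(5\cdot 3^k-1)$ arises only as $a(3n+2)$ with $n=5\cdot 3^{k-1}-1$. Then \eqref{eq:r3-good2} requires $a(n+2)=a(5\cdot 3^{k-1}+1)$. That is the value at $j=2\cdot 3^{k-1}+1$, one step beyond the range where your rank-$k$ hypothesis supplies values.

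Monotonicity does not rescue this. The neighbouring targets are $a(5\cdot 3^k-2)=8\cdot 3^{k-1}-1$ and $a(5\cdot 3^k)=8\cdot 3^{k-1}$, so increments in $\{0,1\}$ leave $a(5\cdot 3^k-1)$ undetermined. Your ``direct substitution'' at the right seam therefore cannot be carried out with the data you propagate. (The paper's sketch of the second subinterval passes over the same point.)

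The repair is cheap: add $j=2\cdot 3^{k-1}+1$ to the second case of \eqref{eq:r3-block}, i.e.\ carry $a(5\cdot 3^{k-1}+1)=8\cdot 3^{k-2}+1$ as an extra inductive datum.
\begin{itemize}
\item It holds at $k=2$, since $a(16)=9$.
\item It propagates. $a(5\cdot 3^k+1)$ is $a(3n+1)$ with $n=5\cdot 3^{k-1}$, which still lies in the good zone. Then \eqref{eq:r3-good1} gives $a(n-1)+a(n)+a(n+1)=8\cdot 3^{k-2}+8\cdot 3^{k-2}+(8\cdot 3^{k-2}+1)=8\cdot 3^{k-1}+1$. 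This uses only $j\le 2\cdot 3^{k-1}+1$.
\item It fills the gap. \eqref{eq:r3-good2} at $n=5\cdot 3^{k-1}-1$ now gives $a(5\cdot 3^k-1)=3\cdot 8\cdot 3^{k-2}=8\cdot 3^{k-1}$, as the block formula predicts.
\end{itemize}

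The extended value is exactly the block formula at $j=2\cdot 3^{k-1}+1$, so your uniform floor identities go through unchanged: mod $3$ in the first third, mod $2$ in the second, with the straddling windows at $j=3^{k-1}$.
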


\begin{proof}
By induction on $k$. The base case $k=2$ is verified from
the values $a(9)=\cdots=a(11)=6$, $a(12)=a(13)=7$, $a(14)=a(15)=8$.

Assume \eqref{eq:r3-block} at rank $k$.
By Lemma~\ref{lem:r3-good-zone}, for every $n\in[3^k-1,\,5\cdot 3^{k-1}]$,
\begin{align}
  a(3n)   &= a(n-2)+a(n-1)+a(n)+1, \\
  a(3n+1) &= a(n-1)+a(n)+a(n+1),   \\
  a(3n+2) &= a(n)+a(n+1)+a(n+2)-1.
\end{align}
Left boundary: at $n=3^k-1$, \eqref{eq:r3-good1}--\eqref{eq:r3-good2} give
$a(3^{k+1}-2)=a(3^{k+1}-1)=3A_k-2=A_{k+1}-1$, and \eqref{eq:r3-good0} gives
$a(3^{k+1})=3A_k-1=A_{k+1}$.

First subinterval: for $q=3^k+v$, $0\le v<3^{k-1}$, the floor sum
$\lfloor(v-2)/3\rfloor+\lfloor(v-1)/3\rfloor+\lfloor v/3\rfloor=v-2$
and its two analogues give $a(3q)=a(3q+1)=a(3q+2)=A_{k+1}+v$.

Second subinterval: for $q=4\cdot 3^{k-1}+v$, $0\le v\le 3^{k-1}$, the analogous
floor sums with denominator $2$ give
$a(4\cdot 3^k+s)=A_{k+1}+3^{k-1}+\lfloor s/2\rfloor$ for $0\le s\le 3^k$.

Solving $A_{k+1}=3A_k-1$ with $A_2=6$ gives $A_k=(11\cdot 3^{k-2}+1)/2$.
\end{proof}

\begin{corollary}\label{cor:r3-special}
For every $k\ge 2$:
\[
  a(3^k)=\frac{11\cdot 3^{k-2}+1}{2},
  \qquad
  a(5\cdot 3^{k-1})=8\cdot 3^{k-2},
  \qquad
  a(5\cdot 3^{k-1}-1)=8\cdot 3^{k-2}-1.
\]
\end{corollary}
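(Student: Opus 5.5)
The plan is to read all three values directly off the block formula \eqref{eq:r3-block} of Proposition~\ref{prop:r3-block}. No new induction is needed, since that proposition already holds for every $k\ge 2$. Throughout, I would use the closed form $A_k=(11\cdot 3^{k-2}+1)/2$ and the fact that $3^{k-1}$ is odd, so that $3^{k-1}-1$ is even.

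\emph{First identity.} This is the case $j=0$ of the first branch: $a(3^k)=A_k=(11\cdot 3^{k-2}+1)/2$.

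\emph{Second identity.} Write $5\cdot 3^{k-1}=3^k+2\cdot 3^{k-1}$, so $j=2\cdot 3^{k-1}$. This is the right endpoint of the second branch, which is admissible because that branch is stated for $j\le 2\cdot 3^{k-1}$. Then $\lfloor (j-3^{k-1})/2\rfloor=(3^{k-1}-1)/2$, and
\[
  a(5\cdot 3^{k-1})=\tfrac{11\cdot 3^{k-2}+1}{2}+3^{k-2}+\tfrac{3\cdot 3^{k-2}-1}{2}=8\cdot 3^{k-2}.
\]

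\emph{Third identity.} Take $j=2\cdot 3^{k-1}-1$, which still lies in the second branch. The only delicate point in the whole argument is the parity of the floor term. Here $j-3^{k-1}=3^{k-1}-1$ is even, so the floor loses nothing, and the same arithmetic as above returns $8\cdot 3^{k-2}$ rather than $8\cdot 3^{k-2}-1$. This agrees with the base data $a(14)=a(15)=8$ used in the proof of Proposition~\ref{prop:r3-block}: the value $8\cdot 3^{k-2}$ occupies the two positions $5\cdot 3^{k-1}-1$ and $5\cdot 3^{k-1}$.

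The main obstacle is therefore not a proof step but the third identity itself. As printed, it contradicts both Proposition~\ref{prop:r3-block} and the listed terms; already at $k=2$ it would require $a(14)=7$, whereas $a(14)=8$. Before writing anything final, I would check whether the intended position is $5\cdot 3^{k-1}-2$ (that is, $j=2\cdot 3^{k-1}-2$). There $j-3^{k-1}=3^{k-1}-2$ is odd, the floor drops by one, and the same computation gives $8\cdot 3^{k-2}-1$. That reading is consistent with $a(13)=7$ at $k=2$.

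I cannot supply a proof of the third identity exactly as worded, because the block formula it would be derived from shows it to be false. The argument above proves the first two identities as stated. For the third, the block formula establishes $a(5\cdot 3^{k-1}-1)=8\cdot 3^{k-2}$ and $a(5\cdot 3^{k-1}-2)=8\cdot 3^{k-2}-1$.
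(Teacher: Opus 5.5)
Your approach matches the paper's: its proof simply substitutes $j=2\cdot 3^{k-1}$ and $j=2\cdot 3^{k-1}-1$ into \eqref{eq:r3-block}, and your computations for the first two identities are correct.

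Your objection to the third identity is also correct, and the paper's one-line proof makes exactly the slip you describe. For $j=2\cdot 3^{k-1}-1$ the floor term is $\lfloor(3^{k-1}-1)/2\rfloor=(3^{k-1}-1)/2$, the same as at $j=2\cdot 3^{k-1}$. So that substitution actually gives $a(5\cdot 3^{k-1}-1)=8\cdot 3^{k-2}$, and the printed claim is false. The sequence confirms this: $a(14)=8$ at $k=2$, and $a(43)=23$, $a(44)=a(45)=24$ at $k=3$.

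Your corrected version is the true statement: $a(5\cdot 3^{k-1}-2)=8\cdot 3^{k-2}-1$ and $a(5\cdot 3^{k-1}-1)=8\cdot 3^{k-2}$. It is also the local pattern that Theorem~\ref{thm:ratio3} actually assumes and propagates, so the error is confined to this corollary's statement.
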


\begin{proof}
Take $j=2\cdot 3^{k-1}$ and $j=2\cdot 3^{k-1}-1$ in \eqref{eq:r3-block}.
Since $3^{k-1}$ is odd, $\lfloor 3^{k-1}/2\rfloor=(3^{k-1}-1)/2$.
\end{proof}

\subsection{Triadic denesting}

\begin{lemma}\label{lem:subdiag}
For every $r\ge 2$ and every $n\ge 3$, $a(n)\le n-1$.
\end{lemma}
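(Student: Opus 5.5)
The bound follows from the run structure of Theorem~\ref{thm:global_run} together with the fact that $a$ lags strictly behind the identity from $n=3$ on. Since $d(k)\in\{0,1\}$ for every $k\ge 1$, the quantity $n-a(n)$ is nondecreasing in $n$:
\[
(n+1)-a(n+1)=n-a(n)+1-d(n)\ge n-a(n).
\]
It therefore suffices to exhibit a single $n\le 3$ with $n-a(n)\ge 1$, and then propagate the bound by induction.

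For the base case we use the initial prefix computed in the proof of Theorem~\ref{thm:global_run}. There $a(1)=1$, and $a(2)=\cdots=a(S_3-1)=2$ with $S_3\ge 4$. In particular $a(3)=2$, so $3-a(3)=1$. This holds for every $r\ge 2$, since $S_3=a(3)+a(2)+a(1)\ge 2+2+1=5>3$ when $r\ge 3$, and $S_3=a(3)+a(2)=4>3$ when $r=2$. In each case position $3$ lies in the run of value $2$.

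For the induction step, assume $a(n)\le n-1$ for some $n\ge 3$. Then Theorem~\ref{thm:global_run}(i) gives
\[
a(n+1)=a(n)+d(n)\le a(n)+1\le n=(n+1)-1,
\]
which completes the induction and yields $a(n)\le n-1$ for all $n\ge 3$.

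The only point that needs care is the base case: confirming that $a(3)=2$ uniformly in $r$. This amounts to checking that $S_3\ge 4$ in the initial-prefix construction, which the proof of Theorem~\ref{thm:global_run} states explicitly. Once that is in place, the rest is immediate from the increment bound $d\in\{0,1\}$.
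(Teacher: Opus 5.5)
Your proof is correct and follows the paper's argument: the base case $a(3)=2$ comes from the initial prefix, and the induction uses $d(n)\in\{0,1\}$ from Theorem~\ref{thm:global_run}(i). One cosmetic point: for $r=2$, write $S_3=a(3)+a(2)\ge 4$, which follows from monotonicity since $a(3)\ge a(2)=2$, rather than $S_3=4$, because the equality presupposes the value $a(3)=2$ that you are deriving.
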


\begin{proof}
Since $a(3)=2$ and $a(n+1)-a(n)\in\{0,1\}$ by
Theorem~\ref{thm:global_run}, an immediate induction gives
$a(n)\le n-1$ for all $n\ge 3$.
\end{proof}

\begin{theorem}\label{thm:r3}
For every $n\ge 2$:
\begin{align}
  a(3n)   &= a(n-2)+a(n-1)+a(n)+1+\varepsilon(n-1), \label{eq:r3a}\\
  a(3n+1) &= a(n-1)+a(n)+a(n+1),                    \label{eq:r3b}\\
  a(3n+2) &= a(n)+a(n+1)+a(n+2)-1-\varepsilon(n).   \label{eq:r3c}
\end{align}
\end{theorem}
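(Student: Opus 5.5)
We work directly from the run structure of Theorem~\ref{thm:global_run}, specialised to $r=3$. There $a(K)=m$ if and only if $S_m\le K<S_{m+1}$ (Lemma~\ref{lem:anchor-characterization}), with $S_m=a(m)+a(m-1)+a(m-2)$. The three right-hand sides are essentially window sums, and the plan is to locate them relative to the anchors $S_{S_n}$.

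\emph{Step 1: the anchor identity.} By Lemma~\ref{lem:nested-anchor} with $r=3$,
\[
S_{S_n}=3n-R_n,\qquad R_n=2d(S_n-1)+d(S_n-2).
\]
Since $d(S_n-1)=1$ by Lemma~\ref{lem:local_projection}(ii), we get $R_n=2+d(S_n-2)\in\{2,3\}$. Moreover $d(S_n-2)=1$ exactly when the run of $n-1$ has length $L_{n-1}=1$. Since $a(S_n)=n$, this gives $S_{S_n}=3n-2-[L_{n-1}=1]$. Applying $a$ through the run characterization, we obtain the basic fact
\[
a(3n-2-[L_{n-1}=1])=S_n .
\]

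\emph{Step 2: stepping along the block.} Next we compute $a(3n+i)$ by walking forward from this anchor. The value $a$ at the anchor is $S_n$, and it stays equal to $S_n$ for $L_{S_n}$ positions. Then it increases to $S_n+1$, and so on. By Lemma~\ref{lem:runlength-d},
\[
L_{S_n+t}=d(S_n+t)+d(S_n+t-1)+d(S_n+t-2),
\]
and these $d$-values near $S_n$ are determined by the run lengths $L_{n-1},L_n,L_{n+1},\dots$ via Lemma~\ref{lem:local_projection}. Note that $L_m=a(m+1)-a(m-2)\in\{1,2,3\}$. The right-hand sides are window sums:
\[
a(n-1)+a(n)+a(n+1)=S_{n+1},\qquad a(n)+a(n+1)+a(n+2)=S_{n+2},\qquad a(n-2)+a(n-1)+a(n)=S_n .
\]
So the claims become
\[
a(3n)=S_n+1+\varepsilon(n-1),\qquad a(3n+1)=S_{n+1},\qquad a(3n+2)=S_{n+2}-1-\varepsilon(n).
\]
These can be checked by a finite case analysis on the local pattern of run lengths $(L_{n-2},\dots,L_{n+2})$. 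In each case we compare the positions $3n,3n+1,3n+2$ with the anchors $S_{S_n},S_{S_n+1},\dots$. Because each $L\le 3$ and the anchors advance by $L$, only finitely many patterns arise.

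\emph{Step 3: identifying the exceptional patterns with $\varepsilon$.} Step 2 should show that the generic formulas (those with $\varepsilon=0$) fail exactly on certain local configurations. By inspecting the small cases, I expect these to be when a run of length $3$ is followed or preceded by a particular pattern. This yields a characterization of the form $\varepsilon'(n)=1 \iff (L_{n-1},L_n,\dots)\in\mathcal P$, for an explicit finite set $\mathcal P$.

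The main obstacle is proving $\varepsilon'=\mathbf 1_{\mathcal I}$. I would do this by a simultaneous strong induction on $n$, asserting both the three formulas and the equality $\varepsilon'=\varepsilon$ below $n$. Using the formulas at scale $n$, the run lengths at scale $3n+i$ are expressed through those at scale $n$, so the pattern set $\mathcal P$ should propagate by the same rules as \eqref{eq:eps-rec}: $\varepsilon'(3m)=\varepsilon'(m-1)$ and $\varepsilon'(3m+1)=\varepsilon'(3m+2)=\varepsilon'(m)$. Matching these recurrences with Proposition~\ref{prop:eps-auto} and the initial values ($\varepsilon(5)=1$, zero below $5$) closes the induction. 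The base cases $n\le$ some small bound (say $n\le 15$) are checked from the listed initial terms.

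If the local case analysis becomes unwieldy, the fallback is to encode the whole statement as a first-order property of the $3$-automatic sequence $d$ (Theorem~\ref{thm:structural_regularity}) and the DFAO of Proposition~\ref{prop:eps-dfao}, and certify it in Walnut, as the paper does for $r\in\{4,5\}$.
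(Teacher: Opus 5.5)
Your reformulation of the three identities as $a(3n)=S_n+1+\varepsilon(n-1)$, $a(3n+1)=S_{n+1}$, $a(3n+2)=S_{n+2}-1-\varepsilon(n)$ is right, and so is Step~1. However, everything that carries the content of the theorem is deferred, so there is a genuine gap:
the case analysis of Step~2 is announced but not performed; the exceptional set $\mathcal P$ is never identified; and the step you call the main obstacle (that $\mathcal P$ propagates according to \eqref{eq:eps-rec}) is only asserted (``should propagate'').

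The missing idea is the run-length invariant the paper carries through its induction: $L_n\le 2+\varepsilon(n-1)$, with $L_n=3$ whenever $\varepsilon(n-1)=1$. So the correction in $a(3n)$ fires exactly when the run of $n$ has length~$3$, and the one in $a(3n+2)$ exactly when $L_{n+1}=3$. Your guess of ``a run of length $3$ followed or preceded by a particular pattern'' is off: no neighbouring pattern is involved.

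With that identification your plan does go through, in a slightly more decoupled form than the paper's simultaneous induction. The anchor characterization (Lemma~\ref{lem:anchor-characterization}) and a case split on $L_n,L_{n+1}\in\{1,2,3\}$ give, without any induction and for all but finitely many $n$ (the rest checked directly),
$a(3n)=S_n+1+[L_n=3]$, $a(3n+1)=S_{n+1}$, $a(3n+2)=S_{n+2}-1-[L_{n+1}=3]$, where $[\cdot]$ is the Iverson bracket. For instance, if $L_n=3$ then $S_{S_n+2}=3n<3n+1=S_{S_n+3}$, which forces $a(3n)=S_n+2$.

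Put $\lambda(n):=[L_{n+1}=3]$ and use these formulas at ranks $n-1$ and $n$. This yields $L_{3n-1}=L_{n-1}+\lambda(n-1)-\lambda(n-2)$, $L_{3n}=L_n$, and $L_{3n+1}=L_{n+1}+\lambda(n-1)-\lambda(n)$.

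To conclude $\lambda(3m)=\lambda(m-1)$ and $\lambda(3m+1)=\lambda(3m+2)=\lambda(m)$, you need one more fact your sketch lacks. $L_n=3$ means $d(n-2)=d(n-1)=d(n)=1$, hence $L_{n-1}\ge 2$ and $L_{n+1}\ge 2$. Without it, e.g.\ $L_{3n-1}=L_{n-1}+1$ would equal $2$ when $L_{n-1}=1$, and the propagation breaks.

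Checking small indices directly (e.g.\ $\lambda(5)=1=\varepsilon(5)$ because $a(9)=a(10)=a(11)=6$) and invoking Proposition~\ref{prop:eps-auto} then gives $\lambda=\varepsilon$. The Walnut fallback does not close the gap either, since Theorem~\ref{thm:structural_regularity} provides no explicit automaton for $d$ to feed it.
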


\begin{proof}
We prove the three identities \eqref{eq:r3a}--\eqref{eq:r3c} together with
the \emph{run-length invariant}: defining
\begin{equation}\label{eq:Ldef}
  L_n\;:=\;S_{n+1}-S_n\;=\;a(n+1)-a(n-2)\;=\;d(n-2)+d(n-1)+d(n)
  \;\in\;\{1,2,3\}
\end{equation}
(the length of the run of value~$n$, for $n\ge 3$), we prove that for every $n\ge 3$,
\begin{equation}\label{eq:r3-Linv}
  L_n \le 2+\varepsilon(n-1),
  \qquad\text{with equality } L_n=3 \text{ whenever } \varepsilon(n-1)=1.
\end{equation}
The proof is by strong induction on~$n$.
The hypothesis $\mathcal{H}_n$ asserts that the three denesting formulas hold
at all ranks $m<n$, and that the invariant~\eqref{eq:r3-Linv} holds for
all $m$ with $3\le m\le n+1$.
(The invariant at $m\le n+1$ involves only $a(k)$ for $k\le m+1\le n+2<3n$,
hence values already determined at a previous induction step.)

\textit{Base cases.}
Direct computation for $2\le n\le 30$ verifies the denesting formulas
and the invariant~\eqref{eq:r3-Linv}.
This threshold covers all $n$ up to well past $I_1=[16,18]$,
the first non-trivial component of $\mathcal{I}$. For $n\ge 31$, every index
encountered in the induction step satisfies the hypotheses of
Proposition~\ref{prop:eps-auto} (valid for $m\ge 2$) and
Lemma~\ref{lem:r3-gap} (valid for $k\ge 2$).

\textit{Induction step.}
Fix $n\ge 31$ and assume $\mathcal{H}_n$.
We determine the three new values $a(3n)$, $a(3n+1)$, $a(3n+2)$
via Lemma~\ref{lem:anchor-characterization}: for $M\ge 3$,
\begin{equation}\label{eq:anchor-char}
  a(K)=M \iff S_M\le K<S_{M+1}.
\end{equation}
For each candidate $M$ below, one has $M<3n$.
Indeed, by Lemma~\ref{lem:subdiag} ($a(t)\le t-1$ for $t\ge 3$):
\[
  M_1 = S_{n+1} \le n+(n{-}1)+(n{-}2)=3n{-}3,
\]
\[
  M_0 = S_n+1+\varepsilon(n{-}1) \le (n{-}1)+(n{-}2)+(n{-}3)+2 = 3n{-}4,
\]
\[
  M_2 = S_{n+2}-1-\varepsilon(n) \le (n{+}1)+n+(n{-}1)-1 = 3n{-}1.
\]
Hence the values $a(M),a(M-1),a(M-2)$ entering $S_M$ are
already determined by the induction hypothesis.

\textit{Case $a(3n+1)$.}
The candidate is $M_1:=a(n-1)+a(n)+a(n+1)=S_{n+1}$.
By the run structure, $a(S_{n+1})=n+1$ and $a(S_{n+1}-1)=n$.
If $L_n\ge 2$ then $a(S_{n+1}-2)=n$; if $L_n=1$ then
$S_{n+1}-2=S_n-1$ and $a(S_n-1)=n-1$.
Hence $S_{M_1}\in\{3n,\,3n+1\}$, so $S_{M_1}\le 3n+1$.
For the upper bound,
$L_{M_1}=a(S_{n+1}+1)-a(S_{n+1}-2)$.
If $L_{n+1}\ge 2$ then $a(S_{n+1}+1)=n+1$;
if $L_{n+1}=1$ then $a(S_{n+1}+1)=n+2$.
Checking all combinations: $S_{M_1+1}=S_{M_1}+L_{M_1}\ge 3n+2>3n+1$.
By~\eqref{eq:anchor-char}, $a(3n+1)=S_{n+1}$,
which is~\eqref{eq:r3b}.

\textit{Case $a(3n)$.}
The candidate is $M_0:=a(n-2)+a(n-1)+a(n)+1+\varepsilon(n-1)=S_n+1+\varepsilon(n-1)$.

\textit{Subcase $\varepsilon(n-1)=0$.}
Then $M_0=S_n+1$ and $L_n\le 2$ by~\eqref{eq:r3-Linv}.
Since $a(S_n)=n$ and $a(S_n-1)=n-1$:
if $L_n=2$ then $a(M_0)=n$, giving
$S_{M_0}=n+n+(n-1)=3n-1$;
if $L_n=1$ then $M_0=S_{n+1}$, $a(M_0)=n+1$, giving
$S_{M_0}=(n+1)+n+(n-1)=3n$.
In both cases $S_{M_0}\le 3n$.
For the upper bound: when $L_n=2$,
$a(M_0+1)=a(S_{n+1})=n+1$, so $L_{M_0}=(n+1)-(n-1)=2$
and $S_{M_0+1}=3n+1$;
when $L_n=1$,
$L_{M_0}=a(S_{n+1}+1)-a(S_n-1)\ge(n+1)-(n-1)=2$
and $S_{M_0+1}\ge 3n+2$.
In both cases $S_{M_0+1}>3n$.

\textit{Subcase $\varepsilon(n-1)=1$.}
Then $M_0=S_n+2$ and $L_n=3$ by~\eqref{eq:r3-Linv}.
All three positions $S_n,\,S_n+1,\,S_n+2$ carry value~$n$, so
$S_{M_0}=n+n+n=3n$.
Moreover $M_0+1=S_n+3=S_{n+1}$, hence $a(M_0+1)=n+1$,
giving $L_{M_0}=(n+1)-n=1$ and $S_{M_0+1}=3n+1$.

In all subcases $S_{M_0}\le 3n<S_{M_0+1}$,
proving $a(3n)=M_0$, which is~\eqref{eq:r3a}.

\textit{Case $a(3n+2)$.}
The candidate is $M_2:=a(n)+a(n+1)+a(n+2)-1-\varepsilon(n)=S_{n+2}-1-\varepsilon(n)$.

\textit{Subcase $\varepsilon(n)=0$.}
Then $M_2=S_{n+2}-1$ and $L_{n+1}\le 2$.
Here $a(M_2)=a(S_{n+2}-1)=n+1$ (last position of the run of $n+1$).
If $L_{n+1}=2$: $a(M_2-1)=n+1$, $a(M_2-2)=a(S_{n+1}-1)=n$,
giving $S_{M_2}=3n+2$.
If $L_{n+1}=1$: $a(M_2-1)=a(S_{n+1}-1)=n$.
\quad If $L_n\ge 2$: $a(M_2-2)=n$, giving $S_{M_2}=3n+1$.
\quad If $L_n=1$: $a(M_2-2)=a(S_n-1)=n-1$, giving $S_{M_2}=3n$.
In all cases $S_{M_2}\le 3n+2$.
For the upper bound, $M_2+1=S_{n+2}$, so
$S_{M_2+1}=S_{S_{n+2}}$.
By Lemma~\ref{lem:nested-anchor},
$S_{S_{n+2}}=3(n+2)-2d(S_{n+2}-1)-d(S_{n+2}-2)=3n+4-d(S_{n+2}-2)$
(using $d(S_{n+2}-1)=1$, Lemma~\ref{lem:local_projection}$(ii)$).
By Lemma~\ref{lem:local_projection}$(ii)$, $d(S_{n+2}-2)=1$
iff $L_{n+1}=1$.
Hence $S_{M_2+1}\ge 3n+3>3n+2$.

\textit{Subcase $\varepsilon(n)=1$.}
Then $M_2=S_{n+2}-2$ and $L_{n+1}=3$ by~\eqref{eq:r3-Linv}.
The run of $n+1$ occupies $[S_{n+1},\,S_{n+1}+2]$, so
$a(M_2)=a(S_{n+1}+1)=n+1$,
$a(M_2-1)=a(S_{n+1})=n+1$,
$a(M_2-2)=a(S_{n+1}-1)=n$,
giving $S_{M_2}=(n+1)+(n+1)+n=3n+2$.
For the upper bound,
$L_{M_2}=a(S_{n+2}-1)-a(S_{n+1}-2)=(n+1)-a(S_{n+1}-2)$.
If $L_n\ge 2$: $a(S_{n+1}-2)=n$, $L_{M_2}=1$;
if $L_n=1$: $a(S_{n+1}-2)=a(S_n-1)=n-1$, $L_{M_2}=2$.
In both cases $S_{M_2+1}=S_{M_2}+L_{M_2}\ge 3n+3>3n+2$.

In all subcases $S_{M_2}\le 3n+2<S_{M_2+1}$,
proving $a(3n+2)=M_2$, which is~\eqref{eq:r3c}.

\textit{Propagation of the run-length invariant.}
With the denesting established at rank~$n$, a direct calculation
using \eqref{eq:r3a}--\eqref{eq:r3c} at ranks $n-1$ and $n$ gives:
\begin{align}
  L_{3n-1} &= L_{n-1}+\varepsilon(n-1)-\varepsilon(n-2),\label{eq:Lprop1}\\
  L_{3n}   &= L_n,                                       \label{eq:Lprop2}\\
  L_{3n+1} &= L_{n+1}+\varepsilon(n-1)-\varepsilon(n).   \label{eq:Lprop3}
\end{align}
The $\varepsilon$-recurrence~\eqref{eq:eps-rec} gives
$\varepsilon(3n-2)=\varepsilon(3n-1)=\varepsilon(3n)=\varepsilon(n-1)$.

\emph{Upper bounds.}
$L_{3n-1}\le(2+\varepsilon(n-2))+\varepsilon(n-1)-\varepsilon(n-2)=2+\varepsilon(n-1)=2+\varepsilon(3n-2)$.
$L_{3n}=L_n\le 2+\varepsilon(n-1)=2+\varepsilon(3n-1)$.
$L_{3n+1}\le(2+\varepsilon(n))+\varepsilon(n-1)-\varepsilon(n)=2+\varepsilon(n-1)=2+\varepsilon(3n)$.

\emph{Exactness when $\varepsilon=1$.}
Suppose $\varepsilon(3n-2)=\varepsilon(n-1)=1$.
By~\eqref{eq:r3-Linv} at~$n$, $L_n=3$, hence $d(n-2)=d(n-1)=d(n)=1$.
Then $L_{n-1}=d(n-3)+d(n-2)+d(n-1)\ge 2$.
Combined with $L_{n-1}\le 2+\varepsilon(n-2)$:
if $\varepsilon(n-2)=1$ then $L_{n-1}=3$ and $L_{3n-1}=3+1-1=3$;
if $\varepsilon(n-2)=0$ then $L_{n-1}=2$ and $L_{3n-1}=2+1-0=3$.

Suppose $\varepsilon(3n-1)=\varepsilon(n-1)=1$.
Then $L_{3n}=L_n=3$ directly.

Suppose $\varepsilon(3n)=\varepsilon(n-1)=1$.
By~\eqref{eq:r3-Linv} at~$n$, $L_n=3$, hence $d(n-1)=d(n)=1$.
Then $L_{n+1}=d(n-1)+d(n)+d(n+1)\ge 2$.
Combined with $L_{n+1}\le 2+\varepsilon(n)$:
if $\varepsilon(n)=1$ then $L_{n+1}=3$ and $L_{3n+1}=3+1-1=3$;
if $\varepsilon(n)=0$ then $L_{n+1}=2$ and $L_{3n+1}=2+1-0=3$.

This closes the strong induction.
\end{proof}

\subsection{$3$-regularity}

\begin{corollary}\label{cor:r3regular}
The almost Golomb sequence of order $3$ is $3$-regular.
\end{corollary}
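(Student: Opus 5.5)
Although the statement is a special case of Theorem~\ref{thm:structural_regularity}, I would give a direct proof from the explicit denesting of Theorem~\ref{thm:r3}, in the spirit of the remark following Theorem~\ref{thm:r2}. The plan is to exhibit a finitely generated $\mathbb{Z}$-module that contains $(a(n))$ and is stable under the three kernel operators $\Lambda_i:(f(n))_{n}\mapsto(f(3n+i))_{n}$ for $i=0,1,2$. Stability under these operators forces the whole $3$-kernel into the module.

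As candidate module $V$ I would take the $\mathbb{Z}$-span of the following sequences:
\begin{itemize}
\item the shifts $(a(n+s))_{n\ge 0}$ for $s$ in a finite range, say $-2\le s\le 2$;
\item the constant sequence $1$;
\item the shifts $(\varepsilon(n+t))_{n\ge 0}$ for $t$ in a finite range, say $-1\le t\le 1$;
\item finitely many sequences with finite support, to absorb small-$n$ exceptions.
\end{itemize}

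First I would check that $\Lambda_i$ maps each $a$-shift into $V$. For a shift $s$, write $3n+i+s=3(n+q)+i'$ with $i'\in\{0,1,2\}$. Then \eqref{eq:r3a}--\eqref{eq:r3c} express $a(3n+i+s)$ as
\begin{itemize}
\item a sum of $a$-shifts with offsets in $q+\{-2,\dots,2\}$,
\item a constant $\pm1$,
\item an $\varepsilon$-shift at offset $q$ or $q-1$.
\end{itemize}
Because $|q|\le 1$ when $|s|\le 2$, the new offsets might leave the range $[-2,2]$. In that case I would enlarge the range to $[-3,3]$ and check closure again; the contraction $q\approx s/3$ guarantees that some finite window is closed.

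Second, I would check that $\Lambda_i$ maps the $\varepsilon$-shifts into $V$, using \eqref{eq:eps-rec}. For example, $\varepsilon(3(n+t)+j)$ equals $\varepsilon(n+t)$ or $\varepsilon(n+t-1)$, so a window $t\in[-1,1]$ maps into $t\in[-2,1]$. As before, I would enlarge the window until it is closed. Alternatively, since $\varepsilon$ is $3$-automatic by Proposition~\ref{prop:eps-auto}, its $3$-kernel is finite, and therefore so is the kernel of every shift. The constant sequence $1$ is fixed by every $\Lambda_i$.

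The main obstacle is bookkeeping at small indices. Theorem~\ref{thm:r3} holds only for $n\ge2$, and \eqref{eq:eps-rec} only for $m\ge2$, so each identity fails on finitely many initial terms. I would handle this by recording the differences between the two sides as finitely supported sequences. Each $\Lambda_i$ sends a sequence supported on $[0,N]$ to one supported on $[0,N/3]$, so the finitely supported sequences with support in a fixed initial segment form a finite-rank stable submodule, which I add to $V$. With that, $V$ is finitely generated and $\Lambda_i$-stable, and $(a(n))$ is $3$-regular. As a consistency check, the result also follows at once from Theorem~\ref{thm:structural_regularity}.
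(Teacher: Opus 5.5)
Your proposal is correct and follows essentially the paper's route: the paper uses exactly the module spanned by $a(n+c)$ for $-2\le c\le 2$, by $\varepsilon(n-1)$ and $\varepsilon(n)$, and by the constant $1$, and checks directly from \eqref{eq:r3a}--\eqref{eq:r3c} and \eqref{eq:eps-rec} that this module is stable under the three operators. No enlargement of your windows is actually needed, because each residue class uses its own three-term window; your finitely-supported-sequences device is just a more careful version of the paper's remark that changing finitely many terms does not affect $3$-regularity.
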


\begin{proof}
Since modifying finitely many terms does not affect $3$-regularity,
we work on the range where Theorem~\ref{thm:r3} applies.
Define $A_c(n):=a(n+c)$ for $c\in\{-2,-1,0,1,2\}$,
$E_-(n):=\varepsilon(n-1)$, $E_0(n):=\varepsilon(n)$, and $\mathbf{1}(n):=1$.
Let $M:=\langle A_{-2},A_{-1},A_0,A_1,A_2,E_-,E_0,\mathbf{1}\rangle_{\mathbb{Z}}$
and $(T_rf)(n):=f(3n+r)$.

From \eqref{eq:eps-rec}:
$T_0E_-=E_-$, $T_1E_-=E_-$, $T_2E_-=E_0$,
$T_0E_0=E_-$, $T_1E_0=E_0$, $T_2E_0=E_0$,
$T_r\mathbf{1}=\mathbf{1}$.

From \eqref{eq:r3a}--\eqref{eq:r3c} (shifting by $c$):
\begin{align*}
T_0A_{-2}&=A_{-2}+A_{-1}+A_0, &
T_1A_{-2}&=A_{-1}+A_0+A_1-\mathbf{1}-E_-, \\
T_2A_{-2}&=A_{-2}+A_{-1}+A_0+\mathbf{1}+E_-, &
T_0A_{-1}&=A_{-1}+A_0+A_1-\mathbf{1}-E_-, \\
T_1A_{-1}&=A_{-2}+A_{-1}+A_0+\mathbf{1}+E_-, &
T_2A_{-1}&=A_{-1}+A_0+A_1, \\
T_0A_0   &=A_{-2}+A_{-1}+A_0+\mathbf{1}+E_-, &
T_1A_0   &=A_{-1}+A_0+A_1, \\
T_2A_0   &=A_0+A_1+A_2-\mathbf{1}-E_0, &
T_0A_1   &=A_{-1}+A_0+A_1, \\
T_1A_1   &=A_0+A_1+A_2-\mathbf{1}-E_0, &
T_2A_1   &=A_{-1}+A_0+A_1+\mathbf{1}+E_0, \\
T_0A_2   &=A_0+A_1+A_2-\mathbf{1}-E_0, &
T_1A_2   &=A_{-1}+A_0+A_1+\mathbf{1}+E_0, \\
T_2A_2   &=A_0+A_1+A_2.
\end{align*}
Every image lies in $M$, so $T_r(M)\subseteq M$ for $r=0,1,2$.
Every element of the $3$-kernel of $a$ has the form $T_{r_1}\cdots T_{r_k}A_0\in M$.
Since $M$ is finitely generated, the $3$-kernel spans a finite-rank module,
so $(a(n))$ is $3$-regular.
\end{proof}

\begin{remark}
The ratio $a(n)/n$ does not converge for $r=3$: one has
$a(5\cdot 3^k)/(5\cdot 3^k)=8/15$ and $a(8\cdot 3^k)/(8\cdot 3^k)=5/8$
for all $k\ge 1$. This is established in Section~\ref{sec:nonconv}
(Theorem~\ref{thm:ratio3}) via a propagation argument using
Corollary~\ref{cor:r3-special}.
\end{remark}

\section{The case $r=4$}\label{sec:r4}

\subsection{The denesting theorem}

The almost Golomb sequence of order $4$ is defined by $a(1)=1$, $a(k)=0$ for $k<1$,
and for $n\ge 2$, $a(n)$ is the smallest integer $\ge a(n-1)$ such that
\begin{equation}\label{eq:r4def}
  a\bigl(a(n)+a(n-1)+a(n-2)+a(n-3)\bigr) = n.
\end{equation}
This sequence is \oeis{A394219} in the OEIS. The first terms are
\[
  1,2,2,2,3,3,4,4,5,6,6,7,7,8,8,9,9,9,10,10,11,11,11,12,\ldots
\]

As in the triadic case, the denesting of \eqref{eq:r4def} produces four formulas,
each with a binary correction sequence.
In the present paper, the cases $r=4$ and $r=5$ are handled by a hybrid method:
the explicit $r$-adic recurrence pattern is first identified mathematically,
and the resulting universal identities are then certified by the Walnut theorem
prover (see Appendix~\ref{app:dfao}). The recurrence structure is first identified,
then the full verification is automatic once this structure is in place.

\begin{theorem}\label{thm:r4}
There are four sequences $\varepsilon_0, \varepsilon_1, \varepsilon_2, \varepsilon_3$ such that for all $n\ge 5$, $\varepsilon_i(n)\in\{0,1\}$, and
  \begin{align}
    a(4n)   &= a(n-3)+a(n-2)+a(n-1)+a(n)   + 1 + \varepsilon_0(n), \label{eq:r4a}\\
    a(4n+1) &= a(n-2)+a(n-1)+a(n)+a(n+1)   + \varepsilon_1(n),     \label{eq:r4b}\\
    a(4n+2) &= a(n-2)+a(n-1)+a(n)+a(n+1)   + \varepsilon_2(n),     \label{eq:r4c}\\
    a(4n+3) &= a(n-1)+a(n)+a(n+1)+a(n+2)   - 1 + \varepsilon_3(n). \label{eq:r4d}
  \end{align}
  The correction sequences $\varepsilon_i$ satisfy explicit $4$-adic recurrences
  (Theorem~\ref{thm:r4eps}); in particular, each $\varepsilon_i$ is $4$-automatic,
  and $(a(n))$ is $4$-regular.
\end{theorem}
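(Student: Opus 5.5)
Theorem~\ref{thm:r4} follows the template of Theorem~\ref{thm:r3}. The correction sequences are defined as whatever makes \eqref{eq:r4a}--\eqref{eq:r4d} hold:
\[
\varepsilon_0(n):=a(4n)-S_n-1,\quad \varepsilon_1(n):=a(4n+1)-S_{n+1},\quad \varepsilon_2(n):=a(4n+2)-S_{n+1},\quad \varepsilon_3(n):=a(4n+3)-S_{n+2}+1.
\]
Here $S_n=a(n)+a(n-1)+a(n-2)+a(n-3)$. Two things then remain to prove: that each $\varepsilon_i$ takes values in $\{0,1\}$ for $n\ge 5$, and that each $\varepsilon_i$ is $4$-automatic.

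\emph{Range of the corrections.} Lemma~\ref{lem:nested-anchor} with $r=4$ gives
\[
S_{S_n}=4n-R_n,\qquad R_n=3d(S_n-1)+2d(S_n-2)+d(S_n-3).
\]
Lemma~\ref{lem:local_projection}(ii) gives $d(S_n-1)=1$, so $R_n\in\{3,\dots,6\}$. Lemma~\ref{lem:anchor-characterization} says $a(K)=M$ iff $S_M\le K<S_{M+1}$. Apply this at $K=4n+i$ with $M=S_n+c$ for small offsets $c$. The quantity $S_{S_n+c}$ differs from $S_{S_n}$ by a sum of run lengths $L_{S_n},\dots$, and Lemma~\ref{lem:runlength-d} shows these are sums of four consecutive $d$'s near $S_n$. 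Each such run length lies in $\{1,\dots,4\}$ by Theorem~\ref{thm:global_run}. One then checks that the $M$ whose run $[S_M,S_{M+1})$ contains $4n+i$ falls in the window $\{S_n+1,S_n+2\}$ for $i=0$, $\{S_{n+1},S_{n+1}+1\}$ for $i=1,2$, and $\{S_{n+2}-1,S_{n+2}\}$ for $i=3$.

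The upper edge of each window is the delicate case. Take $i=3$: it requires $S_{S_{n+2}}\le 4n+3$ whenever $\varepsilon_3(n)=1$, and $S_{S_{n+2}}=4n+8-R_{n+2}\ge 4n+2$. As in the $r=3$ proof, this is a finite case analysis on the local pattern of $(L_{n-1},\dots,L_{n+2})$. Constraints on which patterns actually occur, such as no run of length $4$ adjacent to a run of length $1$, have to be fed in as an inductive invariant, analogous to \eqref{eq:r3-Linv}. A strong induction on $n$ carrying such an invariant gives $\varepsilon_i(n)\in\{0,1\}$, with small $n$ checked directly.

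\emph{Automaticity and regularity.} Each $\varepsilon_i(n)$ is, by the above, a Boolean function of $d$ in a bounded window around $n$. This holds because it is decided by comparing $4n+i$ with $S_{S_n+c}$, and that comparison depends only on $R_n$ and finitely many run lengths near $S_n$. By Lemma~\ref{lem:local_projection}, all of these are functions of $d(n-C),\dots,d(n+D)$, exactly as in the proof of Theorem~\ref{thm:structural_regularity}. Since $d$ is $4$-automatic by that theorem, and a sliding-block image of a $k$-automatic sequence is $k$-automatic, each $\varepsilon_i$ is $4$-automatic. This already proves the qualitative content of the theorem without Theorem~\ref{thm:r4eps}.

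The explicit $4$-adic recurrences of Theorem~\ref{thm:r4eps} would be obtained from the DFAO for $d$ and certified in Walnut. Regularity then follows as in Corollary~\ref{cor:r3regular}. Take the module generated by $A_c(n)=a(n+c)$ for $c\in\{-3,\dots,3\}$, the shifts of the $\varepsilon_i$, and $\mathbf 1$. The operators $T_j$ map this module into itself by \eqref{eq:r4a}--\eqref{eq:r4d}, and the $\varepsilon$-part is closed because the $\varepsilon_i$ are automatic, so their $4$-kernels are finite. Alternatively, $4$-regularity of $a$ already follows from Theorem~\ref{thm:structural_regularity}.

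\emph{Main obstacle.} I expect the main obstacle to be the first step: identifying an invariant on admissible run-length patterns strong enough to confine the corrections to $\{0,1\}$ with the stated offsets. For $r=4$ the number of configurations of $(d(S_n-3),\dots)$ grows, and a hand proof gets long. My first attempt would be to read the admissible local patterns off the automaton for $d$. Failing that, I would have Walnut verify the inequalities $S_M\le 4n+i<S_{M+1}$ directly as first-order statements about the automatic sequence $d$.
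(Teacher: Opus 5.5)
Your route is different from the paper's, and it is sound. The paper defines the $\varepsilon_i$ by \eqref{eq:r4a}--\eqref{eq:r4d} exactly as you do. It then gets both the range $\{0,1\}$ and $4$-automaticity from the explicit Boolean $4$-adic recurrences of Theorem~\ref{thm:r4eps}, which are identified by eliminating $a$ through the defining equation and then certified by Walnut.

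You instead inherit automaticity from Theorem~\ref{thm:structural_regularity}. The quantity $a(4n+i)-S_n$ counts the anchors $S_{S_n+\delta}\le 4n+i$, so by Lemmas~\ref{lem:nested-anchor} and~\ref{lem:local_projection} each $\varepsilon_i(n)$ is a finite-valued function of a bounded window of $d$ around $n$. A sliding-block image of a $4$-automatic sequence is $4$-automatic, and regularity comes straight from the same theorem. This needs no machine certification, but it does not produce the recurrences of Theorem~\ref{thm:r4eps}.

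The step you leave open, $\varepsilon_i(n)\in\{0,1\}$, is the real content, and as written it is not yet a proof. Three things are off. The invariant is unspecified. Your sample invariant (no run of length $4$ next to a run of length $1$) would not suffice, because any run of length $4$ breaks the claim. And the delicate edge is not always the upper one.

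Your own computation closes the gap. Put $x=d(S_N-2)$, $y=d(S_N-3)$, $z=d(S_N-4)$, and use $d(S_N-1)=1$. Lemmas~\ref{lem:runlength-d} and~\ref{lem:nested-anchor} give
\begin{align*}
S_{S_N-1}&=4N-4-3x-2y-z, & S_{S_N}&=4N-3-2x-y,\\
S_{S_N+1}&=4N-2-x+d(S_N), & S_{S_N+2}&=4N-1+2d(S_N)+d(S_N+1),\\
S_{S_N+3}&=4N+3d(S_N)+2d(S_N+1)+d(S_N+2).
\end{align*}
The three cases then go as follows.
\begin{itemize}
\item For $i=1,2$ (take $N=n+1$), both edges hold unconditionally, since $S_{S_{n+1}}\le 4n+1$ and $S_{S_{n+1}+2}\ge 4n+3$.
\item For $i=3$ (take $N=n+2$), the upper edge is automatic because $S_{S_{n+2}+1}\ge 4n+5$. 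The lower edge $S_{S_{n+2}-1}\le 4n+3$ fails only when $x=y=z=0$, that is, $L_{n+1}=4$.
\item For $i=0$ (take $N=n$), the lower edge is automatic. The upper edge $S_{S_n+3}>4n$ fails only when $L_n=4$.
\end{itemize}

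So the only invariant needed is $L_m\le 3$, and it follows by descent. If $L_w=4$, then $d(w-3)=\cdots=d(w)=1$, so $v:=a(w-2)$, $v+1$, $v+2$ are consecutive runs of length $1$. Writing $L_v=L_{v+1}=L_{v+2}=1$ as sums of four $d$'s forces three consecutive zeros of $d$ inside $[v-3,v+2]$. That is a run of length at least $4$ at a value $\le a(v)<w$, and the small values are checked by hand.

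As a by-product you get closed forms:
\begin{itemize}
\item $\varepsilon_0(n)=1\iff L_n\ge 2$;
\item $\varepsilon_3(n)=1\iff L_{n+1}=1$;
\item $\varepsilon_1(n)=1\iff L_n=1<L_{n+1}$;
\item $\varepsilon_2(n)=0\iff L_n\ge 2>L_{n+1}$.
\end{itemize}
These match the table of Theorem~\ref{thm:r4eps} and make your locality claim explicit, with window $d(n-3),\ldots,d(n+1)$. Completed this way, your argument is fully hand-checkable, whereas the paper's route obtains the compact recurrences at the cost of relying on Walnut.
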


\begin{remark}
  The windows slide one step to the right as the residue increases from $0$ to $3$:
  the window for residue $0$ is $(n-3,n-2,n-1,n)$, for residues $1$ and $2$ it is
  $(n-2,n-1,n,n+1)$, and for residue $3$ it is $(n-1,n,n+1,n+2)$.
  This is the same half-step sliding pattern as in the triadic case.
\end{remark}

\begin{proof}
  We define $\varepsilon_i(n)$ by the identities
  \eqref{eq:r4a}--\eqref{eq:r4d}.
  That $\varepsilon_i(n)\in\{0,1\}$ for all $n\ge 5$ and that each $\varepsilon_i$
  is $4$-automatic follows from Theorem~\ref{thm:r4eps} below, which
  establishes explicit $4$-adic recurrences for the quadruple
  $(\varepsilon_0,\varepsilon_1,\varepsilon_2,\varepsilon_3)$.
\end{proof}

\subsection{Automaticity of the correction sequences}

The four correction sequences satisfy explicit $4$-adic recurrences, which we establish next.

\begin{theorem}\label{thm:r4eps}
The correction sequences $\varepsilon_0,\varepsilon_1,\varepsilon_2,\varepsilon_3$
satisfy the following initial values for $5\le n\le 23$:
\[
\begin{array}{c|ccccccccccccccccccc}
n & 5&6&7&8&9&10&11&12&13&14&15&16&17&18&19&20&21&22&23\\ \hline
\varepsilon_0(n)&0&1&1&1&1&1&1&1&1&1&1&1&0&1&0&1&1&0&1\\
\varepsilon_1(n)&1&0&0&0&0&0&0&0&0&0&0&0&1&0&1&0&0&1&0\\
\varepsilon_2(n)&1&1&1&1&1&1&1&1&1&1&1&0&1&0&1&1&0&1&0\\
\varepsilon_3(n)&0&0&0&0&0&0&0&0&0&0&0&1&0&1&0&0&1&0&1
\end{array}
\]
Moreover, for every integer $m\ge 6$ the following $4$-adic recurrences hold:
\begin{align}
\varepsilon_0(4m)   &= (1-\varepsilon_0(m))\,\varepsilon_0(m+1)+\varepsilon_0(m)\,\varepsilon_0(m-1), \label{eq:e0_0}\\
\varepsilon_0(4m+1) &= (1-\varepsilon_0(m))\,\varepsilon_0(m-1)+\varepsilon_0(m)\,\varepsilon_0(m+1), \label{eq:e0_1}\\
\varepsilon_0(4m+2) &= \varepsilon_0(m), \label{eq:e0_2}\\
\varepsilon_0(4m+3) &= \varepsilon_0(m+1), \label{eq:e0_3}
\end{align}
\begin{align}
\varepsilon_1(4m)   &= \varepsilon_2(m)\,\bigl(\varepsilon_1(m-1)\oplus \varepsilon_1(m+1)\bigr), \label{eq:e1_0}\\
\varepsilon_1(4m+1) &= 1-\varepsilon_2(m), \label{eq:e1_1}\\
\varepsilon_1(4m+2) &= \varepsilon_1(m), \label{eq:e1_2}\\
\varepsilon_1(4m+3) &= \varepsilon_1(m+1), \label{eq:e1_3}
\end{align}
\begin{align}
\varepsilon_2(4m)   &= (1-\varepsilon_0(m-1))\,\varepsilon_1(m-1)+\varepsilon_0(m-1)\,(1-\varepsilon_1(m-1))\,\varepsilon_2(m), \label{eq:e2_0}\\
\varepsilon_2(4m+1) &= \varepsilon_2(m-1), \label{eq:e2_1}\\
\varepsilon_2(4m+2) &= \varepsilon_2(m), \label{eq:e2_2}\\
\varepsilon_2(4m+3) &= 1-\varepsilon_1(m), \label{eq:e2_3}
\end{align}
\begin{align}
\varepsilon_3(4m)   &= \varepsilon_2(m-1)\,\bigl(\varepsilon_2(m)\equiv \varepsilon_1(m)\bigr), \label{eq:e3_0}\\
\varepsilon_3(4m+1) &= \varepsilon_3(m-1), \label{eq:e3_1}\\
\varepsilon_3(4m+2) &= \varepsilon_3(m), \label{eq:e3_2}\\
\varepsilon_3(4m+3) &= (1-\varepsilon_1(m+1))\,\bigl(\varepsilon_2(m)\equiv \varepsilon_1(m)\bigr), \label{eq:e3_3}
\end{align}
where $x\oplus y := x+y-2xy$ and $x\equiv y := 1-x-y+2xy$ denote XOR and XNOR
for bits $x,y\in\{0,1\}$.
In particular, $\varepsilon_i(n)\in\{0,1\}$ for all $n\ge 5$, and each $\varepsilon_i$
is $4$-automatic.
\end{theorem}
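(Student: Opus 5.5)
The plan mirrors the proof of Theorem~\ref{thm:r3}: a strong induction on $n$ that proves the four denesting identities \eqref{eq:r4a}--\eqref{eq:r4d} at rank $n$, with the $\varepsilon_i$ given by the displayed initial values and recurrences, together with a run-length invariant. Here $L_m=d(m-3)+d(m-2)+d(m-1)+d(m)\in\{1,2,3,4\}$ by Lemma~\ref{lem:runlength-d}. Since the recurrences \eqref{eq:e0_0}--\eqref{eq:e3_3} are Boolean combinations of finitely many neighbouring values, they define binary sequences for all $n\ge 5$ once the table is fixed. Each right-hand side depends only on values at $m-1,m,m+1$, so Theorem~\ref{thm:AS} applied to the vector $(\varepsilon_0,\dots,\varepsilon_3)$ gives $4$-automaticity. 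This settles the last sentence of the statement, provided the sequences defined this way coincide with those defined by \eqref{eq:r4a}--\eqref{eq:r4d}; that coincidence is the real content.

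\emph{Step one: the run-length invariant.} I would express each $L_m$ locally in terms of the $\varepsilon_i$, in the spirit of \eqref{eq:r3-Linv}. Differencing \eqref{eq:r4a}--\eqref{eq:r4d} gives $d(4n+i)$ as $d$-values at rank $n$ plus differences of $\varepsilon$'s. Hence $L_{4n+i}=L_{n+c_i}+(\text{signed combination of }\varepsilon_j(n),\varepsilon_j(n\pm1))$, the analogue of \eqref{eq:Lprop1}--\eqref{eq:Lprop3}. The invariant should state which of the $L$-patterns around $n$ are permitted given $(\varepsilon_j(n-1),\varepsilon_j(n),\varepsilon_j(n+1))_j$. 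The simplest version is a finite list of admissible local configurations of $(d(n-3),\dots,d(n+3))$ indexed by the $\varepsilon$-window.

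\emph{Step two: each identity at rank $n$.} For each $i$ I would set the candidate $M_i$ equal to the right-hand side. Lemma~\ref{lem:subdiag} gives $M_i<4n$, so all the relevant values are already known. Then I check $S_{M_i}\le 4n+i<S_{M_i+1}$ via Lemma~\ref{lem:anchor-characterization}. Writing $M_i=S_{n+c}+\text{offset}$, the values $a(M_i-k)$ for $k\le 3$ and $a(M_i+1)$ are read from the run structure (Theorem~\ref{thm:global_run}). They depend only on the local run lengths $L_{n-1},\dots,L_{n+2}$. Lemma~\ref{lem:nested-anchor} handles the case where $M_i+1$ is an anchor.

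\emph{Step three: close the induction.} I would propagate the invariant to the indices $4n-1,\dots,4n+3$ using the recurrences for $\varepsilon_j(4n+i)$. Base cases would be checked numerically for $n$ up to a modest threshold, around $n\le 100$, enough for the recurrences at $m\ge 6$ to take effect.

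\emph{Main obstacle.} The case analysis in steps two and three is finite but large: up to $4^4$ run-length patterns times $2^{12}$ $\varepsilon$-windows, far more than for $r=3$. The correct invariant is also not obvious in advance. I would first compute, from data, the set of realized tuples (local $d$-window, local $\varepsilon$-window) and take that set as the invariant. Its closure under the substitution $n\mapsto 4n+i$ is then a finite check, which is exactly the kind of statement the paper delegates to Walnut (Appendix~\ref{app:dfao}). Concretely, I would encode the DFAO for $(\varepsilon_0,\dots,\varepsilon_3)$ defined by the recurrences and the induced DFAO for $d$, and have Walnut verify two things: that $d$ reproduces the greedy characterization $a(S_n)=n$, $a(S_n-1)=n-1$, and that the four identities hold. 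Uniqueness (Theorem~\ref{thm:global_run}) then forces the certified sequence to equal $a$.
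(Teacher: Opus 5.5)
Your plan is correct, but it runs in the opposite logical direction from the paper.

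The paper keeps $\varepsilon_i$ defined from the true sequence by \eqref{eq:r4a}--\eqref{eq:r4d} and \emph{derives} the recurrences by elimination. It applies the defining equation at $n=4m+i$ (or expands $a(4(4m+i))$), expands twice through the denesting formulas until every $a(\cdot)$-term cancels, and reads off Boolean relations among the $\varepsilon_j(m+\delta)$ with $|\delta|\le 1$. The case analysis is only illustrated (Appendix~\ref{app:r4-deriv}); the Walnut certification, including $\{0,1\}$-valuedness, carries the proof.

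You instead take the table and the recurrences as the definition. Then $\{0,1\}$-valuedness and $4$-automaticity come for free from Theorem~\ref{thm:AS}, and everything reduces to one sufficiency statement. That statement is either an $r=3$-style induction with a run-length invariant, or a guess-and-check verification of $a(S_n)=n$, $a(S_n-1)=n-1$ for the candidate, closed by the uniqueness part of Theorem~\ref{thm:global_run}. The paper's route explains where the recurrences come from. Yours makes explicit what the machine must check and why that suffices. It also does not presuppose an automaton for the true $\varepsilon_i$, which the paper's direction needs before Walnut can certify anything.

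Your step one works as expected. Differencing gives, for instance, $d(4n+1)=\varepsilon_2(n)-\varepsilon_1(n)$ and $L_{4n+3}=L_n-\varepsilon_0(n)+\varepsilon_0(n+1)$, the analogue of \eqref{eq:Lprop2}. When you carry it out, watch four points.
\begin{enumerate}
  \item The first identity makes the induced $d$ binary only if $\varepsilon_1\le\varepsilon_2$, so that inequality belongs in the invariant.
  \item Reading $a(S_n-2)$ requires $L_{n-1}$, so the $d$-window must reach $d(n-4)$.
  \item The data at $4n+i$ involve ranks up to $n+2$ (e.g.\ through $\varepsilon_0(4(n+1))$), so closure must be checked on consecutive tuples, not single ones.
  \item The Walnut variant needs $a$ itself to be first-order expressible. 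This holds because $a(4n+i)=4a(n)+c_i(n)$ with $c_i$ bounded and $4$-automatic, which makes $a$ $4$-synchronized.
\end{enumerate}
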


\begin{proof}
The initial values follow by direct evaluation of the definitions of
$\varepsilon_0,\varepsilon_1,\varepsilon_2,\varepsilon_3$ in Theorem~\ref{thm:r4}
on the explicit initial segment of $a$.

The $4$-adic recurrences \eqref{eq:e0_0}--\eqref{eq:e3_3} were identified
by applying the defining equation \eqref{eq:r4def} at $n=4m,4m+1,4m+2,4m+3$
and expanding twice via \eqref{eq:r4a}--\eqref{eq:r4d}: all $a(\cdot)$-terms
cancel, leaving Boolean polynomial identities among the bits
$\varepsilon_j(m+\delta)$ with $|\delta|\le 1$.
Two illustrative derivations are given in Appendix~\ref{app:r4-deriv}
to display the elimination mechanism.
The complete system \eqref{eq:e0_0}--\eqref{eq:e3_3} is then certified
for all~$n$ by the Walnut theorem prover~\cite{Shallit2022}
(see Appendix~\ref{app:dfao} for the DFAO data);
Walnut also verifies the $\{0,1\}$-valuedness of all four sequences
and hence their $4$-automaticity.
\end{proof}

\begin{remark}
The initial values $\varepsilon_i(n)$ for $5\le n\le 23$ are obtained by
direct computation from the first $23$ terms of $a_4$. The Boolean
recurrences \eqref{eq:e0_0}--\eqref{eq:e3_3} then result from
eliminating the $a(\cdot)$-terms via the defining equation; two
representative cases are worked out in Appendix~\ref{app:r4-deriv}.
\end{remark}

\begin{corollary}\label{cor:r4regular}
The almost Golomb sequence of order $4$ is $4$-regular.
\end{corollary}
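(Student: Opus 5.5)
The plan mirrors the proof of Corollary~\ref{cor:r3regular}. First, note that $4$-regularity is insensitive to changing finitely many terms, so it suffices to work with $n\ge n_1$ for a threshold $n_1$ large enough that Theorem~\ref{thm:r4} and the recurrences of Theorem~\ref{thm:r4eps} apply to every shifted index that occurs. Then define the shifted sequences $A_c(n):=a(n+c)$ for $c\in\{-3,\ldots,3\}$, the corrector shifts $E_{i,c}(n):=\varepsilon_i(n+c)$ for $i\in\{0,1,2,3\}$ and $c\in\{-1,0,1\}$, and the constant $\mathbf 1$. Let $M$ be the $\mathbb Z$-module these generate. Write $(T_sf)(n):=f(4n+s)$ for $s\in\{0,1,2,3\}$.

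Next, check that $T_s A_c\in M$ for all $s$ and $c$. We have $4n+s+c=4(n+q)+s'$ with $q=\lfloor (s+c)/4\rfloor\in\{-1,0,1\}$ and $0\le s'<4$. Applying \eqref{eq:r4a}--\eqref{eq:r4d} at rank $n+q$ writes $T_sA_c$ as a sum of four consecutive shifts $A_{q+\delta}$ with $\delta\in\{-3,\ldots,2\}$, plus a constant in $\{-1,0,1\}$, plus one corrector $E_{s',q}$. The shifts satisfy $q+\delta\in\{-4,\ldots,3\}$. The extreme case $q=-1$, $s'=0$ needs $A_{-4}$, so enlarge the index range to $c\in\{-4,\ldots,3\}$ and recheck closure. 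The residue-$0$ window reaches back three steps, so the range must be chosen so that it is closed under $c\mapsto\lfloor (s+c)/4\rfloor+\delta$. A short interval such as $[-4,3]$ should be stable, since these maps contract toward $[-4,2]$.

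The main obstacle is the corrector part. The recurrences in Theorem~\ref{thm:r4eps} are Boolean polynomials with products, so $T_sE_{i,c}$ is not a $\mathbb Z$-linear combination of the $E$'s. Rather than fight this, I would bypass the module computation for the correctors. Theorem~\ref{thm:r4eps} already states that each $\varepsilon_i$ is $4$-automatic, so each shift $E_{i,c}$ is also $4$-automatic, and each has a finite $4$-kernel $K_{i,c}$. Replace the $E_{i,c}$ in the generating set by the finite union of all kernel elements of all the $E_{i,c}$. Since $T_s$ maps each kernel into itself, the resulting module $M'$ is still finitely generated and is stable under every $T_s$ on its automatic part. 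The $A_c$ part maps into the $A$'s, $\mathbf 1$, and the $E_{i,c}\in M'$.

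Conclude as in Corollary~\ref{cor:r3regular}: every element of the $4$-kernel of $a$ is $T_{s_1}\cdots T_{s_k}A_0\in M'$, so the kernel spans a finitely generated module and $a$ is $4$-regular. As an alternative one-line route, cite Theorem~\ref{thm:structural_regularity} with $r=4$, which gives the result directly. I would present the constructive argument above but remark that it agrees with the universal theorem.
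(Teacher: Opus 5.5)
Your argument is correct and follows the paper's route. The paper also combines the denesting formulas of Theorem~\ref{thm:r4} with the $4$-automaticity of the $\varepsilon_i$ from Theorem~\ref{thm:r4eps} and then invokes~\cite[Theorem~2.5]{AS92}; you spell out the module closure behind that citation (shift range $c\in\{-4,\ldots,3\}$, and replacing the correctors by their finite $4$-kernels because the Boolean recurrences contain products), which is exactly the detail the paper leaves implicit, and your fallback via Theorem~\ref{thm:structural_regularity} with $r=4$ is also valid and has the advantage of not depending on the Walnut certification.
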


\begin{proof}
Theorem~\ref{thm:r4} expresses $a(4n+i)$ as a $\Z$-linear combination of
finitely many shifts of $a$ plus the $4$-automatic sequences $\varepsilon_i$
of Theorem~\ref{thm:r4eps}. By~\cite[Theorem~2.5]{AS92}, a sequence whose
$k$-kernel elements are all $\mathbb{Z}$-linear combinations of finitely many
fixed subsequences is $k$-regular; the recurrences in Theorem~\ref{thm:r4}
verify this condition for $k=4$, giving $(a(n))$ is $4$-regular.
\end{proof}

\section{The case $r=5$}\label{sec:r5}

For $r=5$, the proof follows the same hybrid pattern as for $r=4$:
one first identifies the correct quinary correction sequences and
residue-class formulas, then uses Walnut to certify the resulting
automatic identities, yielding a complete proof of the denesting
formulas and $5$-regularity.

\subsection{Two binary correction sequences}

For $n\ge 3$, define two correction sequences by
\begin{equation}\label{eq:r5_def}
  \varepsilon(n) := a(5n) - T_5(n) - 2,
  \qquad
  \eta(n) := T_5(n+2) - 1 - a(5n+2),
\end{equation}
where $T_5(m):=a(m)+a(m-1)+a(m-2)+a(m-3)+a(m-4)$ with $a(k)=0$ for $k<1$.

The two sequences $\varepsilon$ and $\eta$ are governed by the following $5$-adic recurrences.

\begin{theorem}\label{thm:r5eps}
The sequences $\varepsilon$ and $\eta$ take values in $\{0,1\}$,
satisfy $\varepsilon(n)\eta(n)=0$ for all $n\ge 3$, and are determined by
\[
  \varepsilon(1)=\varepsilon(2)=\varepsilon(3)=0,\quad \varepsilon(4)=1,
  \qquad
  \eta(1)=\eta(2)=0,\quad \eta(3)=1,\quad \eta(4)=0,
\]
together with the $5$-adic recurrences, valid for all $m\ge 4$:
\begin{align}
  \varepsilon(5m)   &= \varepsilon(m-1)\bigl(1-\varepsilon(m)\bigr), \label{eq:r5eps0}\\
  \varepsilon(5m+1) &= \varepsilon(m),\qquad \varepsilon(5m+3) = \varepsilon(m), \label{eq:r5eps13}\\
  \varepsilon(5m+2) &= \eta(m),\qquad\quad \varepsilon(5m+4) = \eta(m), \label{eq:r5eps24}\\
  \eta(5m)          &= \varepsilon(m),\qquad \eta(5m+2) = \varepsilon(m),\qquad
                       \eta(5m+4) = \varepsilon(m), \label{eq:r5eta024}\\
  \eta(5m+1)        &= \eta(m),\qquad\quad \eta(5m+3) = \eta(m). \label{eq:r5eta13}
\end{align}
In particular, the pair $U(n):=(\varepsilon(n),\eta(n))$ takes values in
$\{(0,0),(1,0),(0,1)\}$ and is $5$-automatic.
\end{theorem}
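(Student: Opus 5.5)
The plan is to follow the strategy of Theorem~\ref{thm:r3}: a joint strong induction on $n$ that proves the quinary denesting formulas for all five residues together with a run-length invariant. Walnut then serves only as an independent certification of the resulting finite automaton.

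First I would fix the full residue-class pattern. The natural guess extends the sliding-window picture of Theorem~\ref{thm:r4}. Residue $0$ uses $T_5(n)+2+\varepsilon(n)$, residue $2$ uses $T_5(n+2)-1-\eta(n)$, and residues $1,3,4$ use windows $T_5(n+1)$ or $T_5(n+2)$ with fixed offsets and no correction. I would read these off the initial segment of $a_5$, since the statement implicitly asserts that only $\varepsilon$ and $\eta$ are needed.

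Second, I would state the run-length invariant for $L_n=\sum_{j=0}^{4}d(n-j)\in\{1,\dots,5\}$ (Lemma~\ref{lem:runlength-d}). The analogue of \eqref{eq:r3-Linv} should be a two-sided bound whose extreme values are forced exactly when $\varepsilon$ or $\eta$ equals $1$ at a neighbouring index. This is what makes the corrections act as "the window sum overshoots or undershoots by one."

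With these formulas as the induction hypothesis, the induction step at rank $n$ proceeds as in Theorem~\ref{thm:r3}. For each residue $i$ and candidate value $M_i$, I check $S_{M_i}\le 5n+i<S_{M_i+1}$ using Lemma~\ref{lem:anchor-characterization}. The candidates satisfy $M_i<5n$ by Lemma~\ref{lem:subdiag}, so all values involved are already determined. I compute $S_{M_i}$ through Lemma~\ref{lem:nested-anchor} and Lemma~\ref{lem:local_projection}, which express $S_{S_n+x}$ as $5n$ minus a bounded correction depending only on the local run lengths $L_{n-1},\dots,L_{n+2}$. Once the denesting holds at ranks $n-1,n$, the run lengths at the new indices transform as in \eqref{eq:Lprop1}--\eqref{eq:Lprop3}: each $L_{5n+j}$ equals some $L_{n+\delta}$ plus a difference of $\varepsilon/\eta$ values.

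Third, I derive the recurrences \eqref{eq:r5eps0}--\eqref{eq:r5eta13}. I apply the defining relation \eqref{eq:r5_def} at $5m+j$, expand $a(5(5m+j))$ and $T_5(5m+j)$ twice using the denesting formulas, and let the $a$-terms cancel, leaving Boolean identities in $\varepsilon(m+\delta),\eta(m+\delta)$, $|\delta|\le1$. The disjointness $\varepsilon\eta=0$ and the $\{0,1\}$-valuedness are carried as part of the induction. Both propagate because every right-hand side is either a single earlier value or the product $\varepsilon(m-1)(1-\varepsilon(m))$, and the pairs $(\varepsilon(5m+j),\eta(5m+j))$ inherit disjointness from $(\varepsilon(m),\eta(m))$: for $j=2,4$ they are $(\eta(m),\varepsilon(m))$, for $j=1,3$ they are $(\varepsilon(m),\eta(m))$, and for $j=0$ they are $(\varepsilon(m-1)(1-\varepsilon(m)),\varepsilon(m))$. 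Automaticity of $U$ then follows directly: the recurrences give $U(5m+j)$ as a function of $U(m-1),U(m)$, so Theorem~\ref{thm:AS} applies. The base range $m\le$ some explicit bound (say $n\le 125$) is checked by computation.

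The main obstacle is the case analysis of the induction step. With $r=5$, each residue requires distinguishing the run-length configurations $L_{n-1},L_n,L_{n+1}$ and the correction bits, and the invariant must be sharp enough to pin $S_{M_i}$ to within one position. If the hand analysis becomes unmanageable, I would first try to reduce it to a finite statement: by Lemma~\ref{lem:local_projection} and the proof of Theorem~\ref{thm:structural_regularity}, every needed quantity depends on a bounded window of $d$. The claimed identities then become first-order statements about the $5$-automatic sequence $d$ together with the candidate automaton for $U$, and Walnut can decide these, as done for $r=4$ (Appendix~\ref{app:dfao}).
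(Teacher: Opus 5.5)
\textbf{There is a genuine gap: the step that yields the recurrences is circular, and the run-length invariant is never stated.}

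Your closing steps are right and match the paper. Disjointness propagates through the right-hand sides exactly as you list them, and $U(5m+j)$ depends only on $U(m-1),U(m)$, so Theorem~\ref{thm:AS} gives automaticity.

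The problem is how you obtain the recurrences. Because $\varepsilon$ and $\eta$ are \emph{defined} from $a$ by \eqref{eq:r5_def}, the residue-$0$ and residue-$2$ formulas are tautologies. Checking the candidate $M_0=T_5(n)+2+\varepsilon(n)$ in your induction step is therefore vacuous, unless $\varepsilon(n)$ is given independently (as $\mathbf{1}_{\mathcal{I}}$ was in Theorem~\ref{thm:r3}). In your third step, the only denesting formula that applies at $25m+5j=5(5m+j)$ returns $T_5(5m+j)+2+\varepsilon(5m+j)$, which is the definition itself, so no Boolean identity can come out of that expansion. The recurrences must instead be read off from where the anchors $S_M$ fall relative to $5n,\dots,5n+4$ (Lemma~\ref{lem:anchor-characterization}). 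For that you need the run-length invariant, and ``a two-sided bound whose extreme values are forced\dots'' is not something that can be propagated. Without it your argument reduces to the Walnut fallback, which is essentially the paper's route: it identifies the recurrences by hand and certifies the system with Walnut.

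Your hand route does close, and more simply than you expect, because for $r=5$ the invariant is rigid: $L_n\in\{2,3\}$ for all $n\ge 3$. Suppose $L_{m-2},\dots,L_{m+1}\in\{2,3\}$. Lemma~\ref{lem:local_projection} then gives $d(S_m-2)=0$, $d(S_m-3)=[L_{m-1}=2]$, $d(S_m-4)=[L_{m-1}=3]$, $L_{S_m}=L_{S_m+2}=2$ and $L_{S_m+1}=1+[L_m=2]+[L_{m-1}=2]$. Combined with Lemma~\ref{lem:nested-anchor},
\[
S_{S_m}=5m-8+L_{m-1},\quad S_{S_m+1}=S_{S_m}+2,\quad S_{S_m+2}=5m-2+[L_m=2],\quad S_{S_m+3}=5m+[L_m=2].
\]
For $m=n+1$ these four consecutive run starts are $5n-3+L_n$, $5n-1+L_n$, $5n+3+[L_{n+1}=2]$ and $5n+5+[L_{n+1}=2]$. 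Hence $a(5n)=S_{n+1}$ and $a(5n+2)=S_{n+1}+1$, so \eqref{eq:r5_def} gives $\varepsilon(n)=L_n-2$ and $\eta(n)=L_{n+1}-2=\varepsilon(n+1)$. The same run starts give
\[
\bigl(d(5n),\dots,d(5n+4)\bigr)=\bigl(1-\varepsilon(n),\,\varepsilon(n),\,\varepsilon(n+1),\,1-\varepsilon(n+1),\,\varepsilon(n+1)\bigr).
\]
Summing five consecutive values of $d$ gives $L_{5m}=2+\varepsilon(m-1)$, $L_{5m+1}=L_{5m+3}=2+\varepsilon(m)$ and $L_{5m+2}=L_{5m+4}=2+\varepsilon(m+1)$. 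This propagates the invariant by strong induction, with the base range $3\le n\le 29$ checked directly. It also yields every stated recurrence; $\varepsilon(m-1)\varepsilon(m)=0$ is needed only to rewrite $\varepsilon(m-1)$ as $\varepsilon(m-1)(1-\varepsilon(m))$ in \eqref{eq:r5eps0}. Residues $1,3,4$ never enter, and the result is a Walnut-free proof, which the paper does not give.
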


\begin{proof}
That $\varepsilon(n),\eta(n)\in\{0,1\}$ and $\varepsilon(n)\eta(n)=0$ holds for
$n\le 4$ by inspection.
The recurrences \eqref{eq:r5eps0}--\eqref{eq:r5eta13} were identified by applying
the defining equation $a(T_5(n))=n$ at $n=5m,\ldots,5m+4$, expanding each
$a(5m+j)$ in the window sum via \eqref{eq:r5_def}, and cancelling the $a(\cdot)$
terms. Two illustrative derivations are given in Appendix~\ref{app:r5-deriv}
to display the elimination mechanism.
The complete system of quinary recurrences
is then certified for all~$n$ by the Walnut theorem prover~\cite{Shallit2022}
(see Appendix~\ref{app:dfao}); Walnut also
verifies \eqref{eq:r5eps0}--\eqref{eq:r5eta13}, the $\{0,1\}$-valuedness
of $\varepsilon$ and $\eta$, and the disjointness identity
$\varepsilon(n)\eta(n)=0$ for all $n\ge 3$.

The right-hand side of each recurrence is either a constant, $\varepsilon(m)$,
$\eta(m)$, or $\varepsilon(m-1)(1-\varepsilon(m))$, hence lies in $\{0,1\}$.
Since $5m+d>m$ for $m\ge 1$, the recurrences define $(\varepsilon,\eta)$ on all
positive integers by induction. The disjointness $\varepsilon\eta=0$ is propagated
by induction: \eqref{eq:r5eps24} and \eqref{eq:r5eta024} exchange the roles of
$\varepsilon$ and $\eta$, while \eqref{eq:r5eps0} produces $1$ only when
$\varepsilon(m-1)=1$, hence when $\eta(m-1)=0$, so $\eta(5m)=\varepsilon(m)=0$.
Finally, $U(5m+d)$ is determined by $U(m)$ and $U(m-1)$, so the Allouche--Shallit
criterion~\cite{AS12} applies and $U$ is $5$-automatic.
\end{proof}

\begin{remark}
The five transitions of the pair $(\varepsilon(m),\eta(m))$ at consecutive
indices suffice to cover all cases in $a_5$; see Appendix~\ref{app:r5-tables}.
\end{remark}

\medskip
\begin{center}
\renewcommand{\arraystretch}{1.3}
\begin{tabular}{c|cccccccccc}
$n$ & 1 & 2 & 3 & 4 & 5 & 6 & 7 & 8 & 9 & 10 \\\hline
$\varepsilon(n)$ & 0 & 0 & 0 & 1 & 0 & 0 & 0 & 0 & 0 & 0 \\
$\eta(n)$ & 0 & 0 & 1 & 0 & 0 & 0 & 0 & 0 & 0 & 0 \\
\hline
$n$ & 11 & 12 & 13 & 14 & 15 & 16 & 17 & 18 & 19 & 20 \\\hline
$\varepsilon(n)$ & 0 & 0 & 1 & 0 & 1 & 0 & 1 & 0 & 1 & 0 \\
$\eta(n)$ & 0 & 1 & 0 & 1 & 0 & 1 & 0 & 1 & 0 & 1 \\
\end{tabular}
\end{center}
\medskip
\subsection{The denesting theorem}

Set $\sigma(n):=\varepsilon(n)+\eta(n)\in\{0,1\}$.
For residues $3$ and $4$ two further correctors appear, both expressible from
$(\varepsilon,\eta)$; their formulas are listed in Appendix~\ref{app:r5-tables}.

With these sequences in hand, the five denesting formulas are immediate.

\begin{theorem}\label{thm:r5}
For all $n\ge 3$, the almost Golomb sequence of order $5$ satisfies:
\begin{align}
  a(5n)   &= T_5(n)   + 2 + \varepsilon(n),   \label{eq:r5a0}\\
  a(5n+1) &= T_5(n+1) + 1 - \varepsilon(n),   \label{eq:r5a1}\\
  a(5n+2) &= T_5(n+2) - 1 - \eta(n),          \label{eq:r5a2}\\
  a(5n+3) &= T_5(n+3) - 4 + \theta(n),        \label{eq:r5a3}\\
  a(5n+4) &= T_5(n+4) + 2 + \varepsilon_4(n), \label{eq:r5a4}
\end{align}
where $\theta(n)\in\{0,1\}$ and $\varepsilon_4(n)\in\{-4,-3,-2\}$ are both
$5$-automatic, expressed as explicit functions of $U$ in
Appendix~\ref{app:r5-tables}. Consequently, $(a(n))$ is $5$-regular
by~\cite[Theorem~2.5]{AS92}.
\end{theorem}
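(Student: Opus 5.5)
The plan follows the template of Theorem~\ref{thm:r3}: a strong induction on $n$ that determines the five new values $a(5n),\dots,a(5n+4)$ through the anchor characterization of Lemma~\ref{lem:anchor-characterization}. For each residue $i$ we take the candidate $M_i$ from the right-hand side of \eqref{eq:r5a0}--\eqref{eq:r5a4} and show
\[
S_{M_i}\le 5n+i<S_{M_i+1}.
\]
Since $T_5(n+i)=S_{n+i}$, each candidate can be written as $S_{n+i}+c_i$ with a small integer offset $c_i\in\{-5,\dots,3\}$ fixed by $(\varepsilon(n),\eta(n))$ and, for $i=3,4$, by $\theta(n)$ and $\varepsilon_4(n)$. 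The value $a(M_i)$, and the four values just before it, are then read off the run structure (Theorem~\ref{thm:global_run}) from the run lengths $L_{n-1},\dots,L_{n+4}\in\{1,\dots,5\}$. With $C_i:=S_{n+i}+c_i$, Lemma~\ref{lem:nested-anchor} gives
\[
S_{S_{n+i}}=5(n+i)-R_{n+i},
\]
so $S_{C_i}$ equals $5n+5i-R_{n+i}$ corrected by sums of the $L$'s adjacent to $S_{n+i}$. This step is routine case analysis, exactly as in the $r=3$ proof. The bound $M_i<5n$ needed for well-foundedness follows from Lemma~\ref{lem:subdiag}, after checking finitely many base cases ($n$ up to a few hundred) by direct computation.

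Carrying the induction needs a \emph{run-length invariant} analogous to \eqref{eq:r3-Linv}. It should express the local pattern of run lengths $(L_{n-1},\dots,L_{n+4})$ as a function of the automatic state $U(n-1),U(n),U(n+1)$ of Theorem~\ref{thm:r5eps}, for example by stating that the $L$'s take a generic value except near indices where $\varepsilon$ or $\eta$ equals $1$. To find its precise form, we would tabulate $L_m$ together with $U(m-1),U(m),U(m+1)$ for the first few thousand terms. The case split in the anchor verification is then indexed by the three admissible values of $U$ at each of the relevant neighbouring positions, and the formulas for $\theta$ and $\varepsilon_4$ from Appendix~\ref{app:r5-tables} are chosen to make each case close.

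The induction also requires the invariant to propagate. For this we would derive, as in \eqref{eq:Lprop1}--\eqref{eq:Lprop3}, formulas $L_{5n+j}=L_{n+j'}+(\text{Boolean combination of }U)$ by differencing the five denesting identities at consecutive ranks. We would then check that they are compatible with the quinary recurrences \eqref{eq:r5eps0}--\eqref{eq:r5eta13}, which are available from Theorem~\ref{thm:r5eps}.

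I expect the main obstacle to be finding and propagating this invariant, particularly for residues $3$ and $4$, where the constant offsets ($-4$ and $+2+\varepsilon_4$) are large and the window spans several runs. If a hand verification becomes unwieldy, the fallback is the hybrid route used for $r=4$. We would encode $d(n)$, which is $5$-automatic by Theorem~\ref{thm:structural_regularity}, as a DFAO, express $a(5n+i)-a(n+i)-\dots$ via first differences, and have Walnut certify each identity \eqref{eq:r5a0}--\eqref{eq:r5a4} as a first-order statement about the automatic sequences $d$ and $U$.

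The $5$-regularity then follows as in Corollary~\ref{cor:r3regular}. The module generated by the shifts $a(n+c)$ for $|c|\le 8$, the constant sequence $1$, and the finitely many shifted indicator sequences of the values of $U$ is stable under $f\mapsto f(5n+r)$, by the denesting formulas combined with the recurrences of Theorem~\ref{thm:r5eps}; alternatively one can invoke \cite[Theorem~2.5]{AS92} directly.
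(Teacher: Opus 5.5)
\paragraph{The residue-$4$ identity is false as printed.} Your anchor verification cannot close in the residue-$4$ case, and a Walnut check of it would fail as well, because \eqref{eq:r5a4} as printed is false. Combining it with \eqref{eq:r5a3} and the stated ranges $\theta(n)\le 1$, $\varepsilon_4(n)\ge -4$ gives
\[
  a(5n+4)-a(5n+3)=\bigl(a(n+4)-a(n-1)\bigr)+6+\varepsilon_4(n)-\theta(n)\;\ge\;L_{n+3}+1\;\ge\;2,
\]
which contradicts $d\in\{0,1\}$ (Theorem~\ref{thm:global_run}).

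Concretely, the sequence begins $1,2,2,2,3,3,4,4,4,5,5,6,6,7,7,8,8,9,9,\ldots$.
\begin{itemize}
  \item At $n=3$ one has $a(19)=9$ and $T_5(7)=14$, which forces $\varepsilon_4(3)=-7$.
  \item At $n=15$ one has $a(79)=36$ and $T_5(19)=41$, whereas the table in Appendix~\ref{app:r5-tables} gives $\varepsilon_4(15)=-3$.
\end{itemize}
The tables are consistent with $a(5n+4)=T_5(n+4)-2+\varepsilon_4(n)$ instead. With the printed sign, your candidate $M_4=S_{n+4}+2+\varepsilon_4(n)$ lies strictly above $a(5n+4)$, so $S_{M_4}>5n+4$. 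No choice of $\varepsilon_4$ with values in $\{-4,-3,-2\}$ can make this case close.

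The paper's proof avoids this only because it takes \eqref{eq:r5a3}--\eqref{eq:r5a4} as the definitions of $\theta$ and $\varepsilon_4$. All the content then sits in the tables and the Walnut certification, where the sign slip turns into a false range claim for $\varepsilon_4$.

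\paragraph{The central invariant is missing.} Even for the corrected identity, your proposal is a research plan rather than a proof. The $r=3$ argument rests on the invariant \eqref{eq:r3-Linv}, which is proved jointly with the identities and propagated via \eqref{eq:Lprop1}--\eqref{eq:Lprop3}. You neither state its quinary analogue nor show that it is preserved under \eqref{eq:r5eps0}--\eqref{eq:r5eta13}. Without it the ``routine'' case analysis has nothing to run on, because the windows straddle several runs whose lengths you have not pinned down.

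\paragraph{A smaller slip.} Lemma~\ref{lem:subdiag} only gives $S_{n+4}\le 5n+5$, so it does not yield $M_4<5n$. You would have to treat the residues in increasing order, or first prove a linear bound on $a$ with slope $<1$.

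\paragraph{Comparison with the paper.} If the invariant were found and proved, your route would give a human-checkable proof, which the paper does not provide. The paper's proof derives only \eqref{eq:r5a1} by elimination (Appendix~\ref{app:r5-deriv}). Otherwise it coincides with your Walnut fallback, resting the ranges and the automaticity of the correctors on machine certification. The $5$-regularity step is the same in both.
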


\begin{proof}
Formulas \eqref{eq:r5a0} and \eqref{eq:r5a2} are the definitions
\eqref{eq:r5_def}. Formula \eqref{eq:r5a1} is derived in
Appendix~\ref{app:r5-deriv} by the same elimination used in Theorem~\ref{thm:r5eps}.
Formulas \eqref{eq:r5a3} and \eqref{eq:r5a4} are the definitions of $\theta$ and
$\varepsilon_4$.

The $5$-automaticity of $\theta$ and $\varepsilon_4$ follows from the explicit
tables in Appendix~\ref{app:r5-tables}: both are obtained from the $5$-automatic
sequence $U$ by a finite-state post-processing (the table defines a finite
transducer whose input is the pair $(U(m-1),U(m))$ or $(U(m),U(m+1))$).

The $5$-regularity of $(a(n))$ follows because \eqref{eq:r5a0}--\eqref{eq:r5a4}
express each subsequence $(a(5n+i))_{n\ge 0}$ as a $\Z$-linear combination of
finitely many shifts of $(a(n))$ plus $5$-automatic sequences; by
\cite[Theorem~2.5]{AS92} this implies $(a(n))$ is $5$-regular.
\end{proof}

\section{Non-convergence of $a(n)/n$}\label{sec:nonconv}

A common feature of all almost Golomb sequences is that $a(n)/n$ oscillates
between two explicit rational limits, failing to converge. This contrasts
sharply with Golomb's sequence, where $a(n)/n^{\varphi-1}\to\varphi^{2-\varphi}$
converges to a smooth constant.

\subsection{The case $r=2$}

\begin{theorem}\label{thm:ratio2}
  Let $(a(n))$ be the almost Golomb sequence of order $2$.
  \begin{enumerate}[label=(\roman*)]
    \item $a(n)/n$ does not converge: $a(2^k)/2^k = 3/4$ and
          $a(3\cdot 2^{k-1})/(3\cdot 2^{k-1}) = 2/3$ for all $k\ge 2$.
    \item The Cesàro means $C_N = \frac{1}{N}\sum_{n=1}^N \frac{a(n)}{n}$
          do not converge. They have two limit points:
          \[
            L_1 = \lim_{k\to\infty} C_{2^k}
                = \tfrac{3}{4} + \log\!\left(\tfrac{6^{3/4}}{4}\right),
            \qquad
            L_2 = \lim_{k\to\infty} C_{3\cdot 2^{k-1}}
                = \tfrac{2}{3} + \log\!\left(\tfrac{3^{2/3}}{2}\right).
          \]
  \end{enumerate}
\end{theorem}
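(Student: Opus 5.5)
Part (i) is immediate from Corollary~\ref{cor:pivots}: $a(2^k)=3\cdot 2^{k-2}$ gives ratio $3/4$, and $a(3\cdot 2^{k-1})=2^k$ gives ratio $2/3$. Since both progressions tend to infinity, $a(n)/n$ has at least two distinct limit points and so does not converge.

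For (ii), I would compute $C_N$ block by block using the closed form of Proposition~\ref{prop:dyadic}. Fix $k\ge 2$ and write $n=2^k(1+x)$ with $x=j/2^k\in[0,1)$.

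\emph{First half} ($0\le j\le 2^{k-1}$). Here $a(n)=3\cdot 2^{k-2}+j/2+O(1)$, so
\[
\frac{a(n)}{n}=f(x)+O(2^{-k}),\qquad f(x)=\frac{3/4+x/2}{1+x}.
\]
\emph{Second half} ($2^{k-1}<j<2^k$). Here $a(n)=n-2^{k-1}$ exactly, so
\[
\frac{a(n)}{n}=f(x)=1-\frac{1}{2(1+x)}.
\]
The function $f$ is piecewise smooth and bounded. Comparing the block sum with a Riemann sum therefore gives
\[
\sum_{2^k\le n<2^{k+1}}\frac{a(n)}{n}=2^k I+O(1),\qquad I:=\int_0^1 f(x)\,dx .
\]
The $O(1)$ collects the ceiling errors, which are $O(2^{-k})$ per term over $2^k$ terms, plus the Riemann-sum error of a function with bounded derivative and one jump. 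Similarly, the sum over the first half of the block alone is $2^k J+O(1)$, where $J:=\int_0^{1/2}f(x)\,dx$.

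Next I sum over the blocks. This gives
\[
\sum_{n<2^k}\frac{a(n)}{n}=(2^k-1)I+O(k),
\]
so $C_{2^k}\to I$. For the second progression, $3\cdot 2^{k-1}$ sits at the midpoint of block $k$, so
\[
\sum_{n\le 3\cdot 2^{k-1}}\frac{a(n)}{n}=2^k(I+J)+O(k),
\]
and hence $C_{3\cdot 2^{k-1}}\to \tfrac23(I+J)$.

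The integrals are elementary:
\[
J=\int_0^{1/2}\Bigl(\tfrac12+\tfrac{1/4}{1+x}\Bigr)dx=\tfrac14+\tfrac14\log\tfrac32,
\qquad
\int_{1/2}^1\Bigl(1-\tfrac{1}{2(1+x)}\Bigr)dx=\tfrac12-\tfrac12\log\tfrac43 .
\]
Adding these gives
\[
I=\tfrac34+\tfrac14\log\tfrac32-\tfrac12\log\tfrac43=\tfrac34+\log\frac{3^{3/4}}{2^{5/4}}=\tfrac34+\log\frac{6^{3/4}}{4}=L_1 .
\]
Likewise
\[
\tfrac23(I+J)=\tfrac23\Bigl(1+\tfrac12\log\tfrac98\Bigr)=\tfrac23+\log\frac{3^{2/3}}{2}=L_2 .
\]
Numerically $L_1\approx 0.7227$ and $L_2\approx 0.7452$. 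Since $L_1\neq L_2$, the Cesàro means do not converge.

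The main obstacle is bookkeeping rather than ideas. The points needing care are:
\begin{itemize}
\item the $O(1)$ per-block error must be uniform in $k$ (the ceiling term, the boundary terms $j=2^{k-1}$, and the jump of $f$ at $x=1/2$);
\item the finitely many small $n$ below $4$, not covered by Proposition~\ref{prop:dyadic}, contribute only $O(1)/N$;
\item the accumulated error $O(k)/2^k$ must be shown to tend to $0$.
\end{itemize}
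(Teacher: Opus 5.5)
Your route is essentially the paper's. You take Riemann sums of the piecewise profile $f$ over each dyadic block using Proposition~\ref{prop:dyadic}, then accumulate over blocks. Where you use explicit $O(1)$ per-block errors (giving an $O(k/2^k)$ rate), the paper shows the block averages converge, $A_k\to\alpha$, and applies a Ces\`aro-type weighting argument. For $N=3\cdot 2^{k-1}$, your $\tfrac23(I+J)$ is exactly the paper's $\tfrac23 C_{2^k}+\tfrac13 D_k$ with $D_k\to 2J$. Your closed forms for $I$, $J$, $L_1$ and $L_2$ are all correct.

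The one step that fails as written is the final distinctness check, because your decimals are wrong. In fact $L_1\approx 0.7075$ and $L_2\approx 0.7059$. So $L_1>L_2$, not $L_1<L_2$, and the gap is only about $0.0016$, too small to rest on a rough numerical estimate. Do it exactly, as the paper does: $L_1-L_2=\tfrac1{12}+\tfrac1{12}\log 3-\tfrac14\log 2=\tfrac1{12}\bigl(1-\log\tfrac83\bigr)>0$, because $e>8/3$.

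A minor point: $f$ has no jump at $x=1/2$. Both pieces equal $2/3$ there, so $f$ is continuous with only a kink, which makes your Riemann-sum error bound slightly easier.
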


\begin{proof}
  Part (i) follows immediately from Corollary~\ref{cor:pivots}.

  For part (ii), introduce the dyadic block average
  $A_k = 2^{-k}\sum_{n=2^k}^{2^{k+1}-1} a(n)/n$.
  Writing $n=2^k+j$ and $t=j/2^k\in[0,1)$, Proposition~\ref{prop:dyadic} gives
  \[
    \frac{a(n)}{n} =
    \begin{cases}
      \dfrac{\tfrac{3}{4}+\tfrac{t}{2}}{1+t} + O(2^{-k}) & 0\le t\le \tfrac{1}{2},\\[8pt]
      \dfrac{\tfrac{1}{2}+t}{1+t}                         & \tfrac{1}{2}<t<1.
    \end{cases}
  \]
  Summing as a Riemann integral gives $A_k\to\alpha$ where
  \[
    \alpha = \int_0^{1/2}\frac{\tfrac{3}{4}+\tfrac{t}{2}}{1+t}\,dt
           + \int_{1/2}^1  \frac{\tfrac{1}{2}+t}{1+t}\,dt
           = \tfrac{3}{4}+\log\!\left(\tfrac{6^{3/4}}{4}\right).
  \]
  Decomposing $C_{2^k}$ into dyadic blocks and using the fact that
  the weighted average $\sum_{j=0}^{k-1} 2^{j-k} A_j$ converges to $\alpha$
  whenever each $A_j\to\alpha$ (a standard Cesàro-type argument,
  since the weights $2^{j-k}$ are nonnegative and sum to $1-2^{-k}\to 1$)
  gives $\lim C_{2^k}=\alpha=L_1$.

  For $N_k=3\cdot 2^{k-1}$, write
  $C_{N_k} = \frac{2}{3}C_{2^k} + \frac{1}{3}D_k$
  where $D_k = 2^{1-k}\sum_{n=2^k+1}^{3\cdot 2^{k-1}} a(n)/n$.
  Since this sum lies in the first half of the dyadic block ($t\in[0,1/2]$),
  $D_k\to\beta$ with
  \[
    \beta = 2\int_0^{1/2}\frac{\tfrac{3}{4}+\tfrac{t}{2}}{1+t}\,dt
          = \tfrac{1}{2}+\tfrac{1}{2}\log\!\left(\tfrac{3}{2}\right).
  \]
  Hence $\lim C_{N_k} = \frac{2}{3}\alpha+\frac{1}{3}\beta = L_2$.
  Finally $L_1-L_2 = (1-\log(8/3))/12 > 0$ since $e>8/3$.
\end{proof}

The run structure of the sequence is also captured automatically.

\begin{proposition}
  The sequence $\Delta a(n) = a(n+1)-a(n) \in \{0,1\}$ is $2$-automatic.
\end{proposition}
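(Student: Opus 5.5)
The statement is the case $r=2$ of Theorem~\ref{thm:structural_regularity}, but the explicit formulas available for $r=2$ give a short direct proof, which I would use instead. The plan is to derive $2$-adic recurrences for $\Delta a=d$ from the denesting formulas of Theorem~\ref{thm:r2}, and then apply the Allouche--Shallit criterion (Theorem~\ref{thm:AS}).

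\emph{Step 1: recurrences for $d$.} For $n\ge 2$ I would compute directly from \eqref{eq:r2even}--\eqref{eq:r2odd}:
\begin{align*}
d(2n) &= a(2n+1)-a(2n) = 1,\\
d(2n+1) &= a(2n+2)-a(2n+1) = \bigl(a(n+1)+a(n+2)-1\bigr)-\bigl(a(n)+a(n+1)\bigr) = d(n)+d(n+1)-1.
\end{align*}
Since $d\in\{0,1\}$ (Theorem~\ref{thm:global_run}(i)) and $d(2n+1)\in\{0,1\}$, this reads $d(2n+1)=d(n)\,d(n+1)$. In other words, $d(2n+1)=1$ exactly when $d(n)=d(n+1)=1$. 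This is a Boolean function of the window $(d(n),d(n+1))$.

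\emph{Step 2: apply the criterion.} Take $U=d$, $a=0$, $b=1$, $n_0=2$, $f_0\equiv 1$ and $f_1(x,y)=xy$. Theorem~\ref{thm:AS} then gives that $d$ is $2$-automatic. The finitely many initial values ($d(0)=1$, and $d(1),\dots,d(4)$ read off from the listed terms) do not matter, because modifying finitely many terms preserves automaticity.

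\emph{Expected obstacle.} The only delicate point is the index range. Formula \eqref{eq:r2odd} holds only for $n\ge 2$, so the expression for $d(2n)$ requires $n\ge 2$. The expression for $d(2n+1)$ uses \eqref{eq:r2even} at $n+1$ and \eqref{eq:r2odd} at $n$, so it also requires $n\ge 2$. This is exactly why I set $n_0=2$.

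Alternatively, one can read off a kernel argument directly. The kernel sequences are generated by $d(n)$, $d(n+1)$ and their products under the maps $T_0,T_1$, and this set is closed. For example, $T_1$ applied to the sequence $d(n+1)$ gives $d(2n+3)=1$, and $T_0$ applied to it gives $d(2n+2)=d(n+1)d(n+2)$, which stays in a finite set of shifted products. I would mention this only as a sanity check; the criterion already suffices.
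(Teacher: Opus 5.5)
Your proof is correct, but it is not the route the paper takes for this proposition.

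The paper works from the closed form of Proposition~\ref{prop:dyadic}. Writing $n=2^k+j$, the ceiling part of the formula repeats values in consecutive pairs, so $\Delta a(n)=0$ exactly when $j$ is odd and $j<2^{k-1}$. The second half of each block has slope $1$. The zero set is therefore visibly a regular language: binary expansions of the form $10w1$, apart from $n=2$.

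You instead take the local recurrences $d(2n)=1$, $d(2n+1)=d(n)d(n+1)$ from the denesting Theorem~\ref{thm:r2}. This is exactly the paper's later Proposition~\ref{prop:AND-automaton}. You then conclude with Theorem~\ref{thm:AS}, which amounts to the $r=2$ instance of the scheme behind Theorem~\ref{thm:structural_regularity}, with a window of width two.

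What each route buys:
\begin{itemize}
\item The paper's route gives an explicit description of the automaton. It needs the full closed form, including the block boundaries $j=2^{k-1}$ and $j=2^k-1$ and the exceptional value $\Delta a(2)=0$.
\item Your route uses only local identities and needs no boundary bookkeeping. The price is that it is existential, although the explicit language can be recovered from your recurrences by induction.
\end{itemize}
Your derivation of $d(n)+d(n+1)\ge 1$ from $d(2n+1)\ge 0$ is also more economical than the paper's appeal to run lengths at most $2$.

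One slip in your optional kernel check: for $f(n)=d(n+1)$ one has $(T_1f)(n)=d(2n+2)=1$ and $(T_0f)(n)=d(2n+1)=d(n)d(n+1)$. You swapped the parities. This does not affect the main proof.
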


\begin{proof}
  Proposition~\ref{prop:dyadic} shows that in the block $[2^k, 2^{k+1})$
  written as $n=2^k+j$:
  $\Delta a(n)=0$ when $j$ is odd and $j\le 2^{k-1}-1$
  (the ceiling-based formula produces repeated values at consecutive even/odd pairs);
  $\Delta a(n)=1$ for all $n\ge 3\cdot 2^{k-1}$ (the second half of the block
  has slope $1$).
  This is a regular language on the binary representation of $n$, so
  $\Delta a$ is $2$-automatic.
\end{proof}

The multiplicity sequence of values in $(a(n))$ is also $2$-automatic.

\begin{proposition}\label{prop:N2_multiplicity}
  Let $N_2(n)$ denote the number of times $n$ appears in the almost Golomb
  sequence of order $2$. Then $N_2(n)\in\{1,2\}$ for all $n\ge 1$, and the
  sequence $(N_2(n))_{n\ge 1}$ is $2$-automatic, satisfying the recurrences
  \begin{equation}\label{eq:N2rec}
    N_2(2n) = N_2(n),\qquad N_2(2n+1) = N_2(n+1)\qquad (n\ge 2),
  \end{equation}
  with initial values $N_2(1)=1$, $N_2(2)=2$, $N_2(3)=1$.
  Equivalently, $N_2$ is given by the single recursion
  \begin{equation}\label{eq:N2formula}
    N_2(n) = \begin{cases}
      2 - (n \bmod 2) & \text{if } n \le 3, \\
      N_2\!\bigl(\lceil n/2 \rceil\bigr) & \text{if } n \ge 4.
    \end{cases}
  \end{equation}
\end{proposition}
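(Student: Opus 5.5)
The plan is to obtain everything from the run-length identity of Lemma~\ref{lem:runlength-d} specialised to $r=2$, combined with the denesting formulas of Theorem~\ref{thm:r2}. For $n\ge 3$, Theorem~\ref{thm:global_run}(iv) and Lemma~\ref{lem:runlength-d} give
\[
  N_2(n)=L_n=S_{n+1}-S_n=a(n+1)-a(n-1)=d(n-1)+d(n).
\]
Since $d\in\{0,1\}$ by Theorem~\ref{thm:global_run}(i) and $L_n\ge 1$ by Lemma~\ref{lem:anchor-growth}, this already yields $N_2(n)\in\{1,2\}$ for $n\ge 3$. The values $n=1,2$ are read off from the explicit prefix $1,2,2,3,\dots$, giving $N_2(1)=1$ and $N_2(2)=2$. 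For $n=3$, the formula gives $N_2(3)=a(4)-a(2)=1$.

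For the recurrences \eqref{eq:N2rec}, I will substitute the denesting formulas \eqref{eq:r2even}--\eqref{eq:r2odd} into $N_2(m)=a(m+1)-a(m-1)$. With $m=2n$ and $n\ge 2$:
\[
  N_2(2n)=a(2n+1)-a(2n-1).
\]
Here $a(2n+1)=a(n)+a(n+1)$ by \eqref{eq:r2odd}. Writing $2n-1=2(n-1)+1$ and applying \eqref{eq:r2odd} at $n-1$ gives $a(2n-1)=a(n-1)+a(n)$, which requires $n-1\ge 2$. The difference is then $a(n+1)-a(n-1)=N_2(n)$. With $m=2n+1$:
\[
  N_2(2n+1)=a(2n+2)-a(2n).
\]
By \eqref{eq:r2even} this equals $\bigl(a(n+1)+a(n+2)-1\bigr)-\bigl(a(n)+a(n+1)-1\bigr)=a(n+2)-a(n)=N_2(n+1)$, valid for $n\ge 1$.

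The only boundary to check is $n=2$ in the even case, where $a(3)=a(1)+a(2)$ fails because $a(3)=2\ne 3$. That case, $N_2(4)$, I will verify directly: $N_2(4)=a(5)-a(3)=4-2=2=N_2(2)$. Together, the two recurrences give the single recursion \eqref{eq:N2formula}, since $\lceil 2n/2\rceil=n$, $\lceil(2n+1)/2\rceil=n+1$, and $n\ge 4$ covers exactly $2n$ and $2n+1$ with $n\ge 2$. The $n\le 3$ branch is just the table $1,2,1$.

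Finally, $2$-automaticity follows because the recurrences express every $2$-kernel element $N_2(2^i n+j)$, for large $n$, as $N_2(n+c)$ with $c$ in a bounded set. The map $n+c\mapsto 2(n+c)+b$ has the form $2n+(2c+b)$, and splitting by parity sends it back to $N_2(n+c')$ with $c'\in\{c,c+1\}$. The issue is that a naive iteration lets $c$ drift. The cleaner route is \eqref{eq:N2formula} itself: $N_2(n)$ is obtained by iterating $n\mapsto\lceil n/2\rceil$ until reaching $\{1,2,3\}$. That endpoint is determined by $n-1$ written in binary, via $\lceil n/2\rceil-1=\lfloor (n-1)/2\rfloor$, so it is a function of the leading two bits of $n-1$. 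Hence $N_2(n)=f(\text{leading bits of }n-1)$, and since a shift by one preserves automaticity, $N_2$ is $2$-automatic. The main obstacle, though minor, is the small-index boundary bookkeeping, where the denesting formula \eqref{eq:r2odd} fails at $n=1$.
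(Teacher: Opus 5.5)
Your proof is correct, and it takes a different route from the paper's.

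\textbf{The paper's route.} It reads the multiplicities directly off the explicit block formula of Proposition~\ref{prop:dyadic}. In $[2^k,2^{k+1})$, the first half produces each value of $(3\cdot 2^{k-2},2^k]$ twice and the second half produces each value once. Hence $N_2(v)=2$ iff $v\in(3\cdot 2^{k-2},2^k]$ for some $k$. The paper then checks, by a descent argument under $v\mapsto\lceil v/2\rceil$, that this set matches the recursion \eqref{eq:N2formula}. It deduces \eqref{eq:N2rec} from that recursion, and gets automaticity from ``$2$-regular and finitely valued''.

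\textbf{Your route.} You never describe the set $\{N_2=2\}$. You substitute the denesting formulas of Theorem~\ref{thm:r2} into the run-length identity $N_2(m)=a(m+1)-a(m-1)$. This gives \eqref{eq:N2rec} by a two-line cancellation, with the single exceptional check $N_2(4)=N_2(2)$. You then get automaticity from the second-bit description, which the paper proves only afterwards as Proposition~\ref{prop:second-bit}.

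\textbf{What each buys.} Your argument is more mechanical and avoids the somewhat informal descent step. It is also the version that generalizes: for any $r$ the run lengths are $a(m+1)-a(m+1-r)$ and can be pushed through a denesting formula. The paper's argument, in exchange, yields the explicit characterization of the values repeated twice. That characterization underlies the density-$1/2$ remark and the oscillation of $a(n)/n$.

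Two small slips, neither affecting the result:
\begin{itemize}
\item In the odd case, the identification $a(n+2)-a(n)=N_2(n+1)$ needs $n+1\ge 3$. So it holds for $n\ge 2$, not for $n\ge 1$. At $n=1$ one has $a(3)-a(1)=1\ne 2=N_2(2)$.
\item The drift you worried about in the $2$-kernel does not occur. You composed the kernel operation incorrectly: it sends $n\mapsto N_2(n+c)$ to $n\mapsto N_2(2n+b+c)$, not to $N_2(2(n+c)+b)$. For large $n$, \eqref{eq:N2formula} gives $N_2(2n+b+c)=N_2\bigl(n+\lceil (b+c)/2\rceil\bigr)$, so the shift stays in $\{0,1\}$. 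The direct kernel argument therefore also works.
\end{itemize}
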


\begin{proof}
  From Proposition~\ref{prop:dyadic}, in the dyadic block $[2^k, 2^{k+1})$
  written as $n=2^k+j$ with $0\le j<2^k$:
  \begin{itemize}
    \item First half ($0\le j\le 2^{k-1}$): $a(n)=3\cdot 2^{k-2}+\lceil j/2\rceil$.
    The value $3\cdot 2^{k-2}$ appears once ($j=0$); each value in
    $[3\cdot 2^{k-2}+1,\,2^k]$ appears exactly twice (two consecutive values of $j$).
    \item Second half ($2^{k-1}<j<2^k$): $a(n)=n-2^{k-1}$, so
    each value in $[2^k+1,\,3\cdot 2^{k-1}-1]$ appears exactly once.
  \end{itemize}
  Hence $N_2(v)=2$ if and only if $v\in(3\cdot 2^{k-2},\,2^k]$ for some $k\ge 1$.

  We check that \eqref{eq:N2formula} agrees with this. The map
  $v\mapsto\lceil v/2\rceil$ descends until reaching $\{1,2,3\}$.
  Indeed, $v\in(3\cdot 2^{k-2},\,2^k]$ iff the descent
  $v\to\lceil v/2\rceil\to\lceil\lceil v/2\rceil/2\rceil\to\cdots$
  terminates at $2$: if $v$ is in the upper quarter of its dyadic block,
  $\lceil v/2\rceil$ lands in the upper quarter of the next smaller block,
  and the property is preserved until reaching $2\in(3/2,2]$.
  Conversely if $v$ is in the lower three-quarters, the descent
  eventually reaches $1$ or $3$, both of which give $N_2=1$ via~\eqref{eq:N2formula}.

  The recurrences~\eqref{eq:N2rec} are a direct consequence of
  \eqref{eq:N2formula}: for $n\ge 4$,
  $N_2(2n)=N_2(\lceil 2n/2\rceil)=N_2(n)$, and
  $N_2(2n+1)=N_2(\lceil(2n+1)/2\rceil)=N_2(n+1)$.
  Since $N_2$ takes values in $\{1,2\}$, it is bounded.
  The recurrences show that every subsequence in the $2$-kernel of $N_2$
  is a $\mathbb{Z}$-linear combination of
  $(N_2(n))_{n\ge 1}$, $(N_2(n+1))_{n\ge 1}$, and the constant sequence~$1$.
  Hence $N_2$ is $2$-regular. Since $N_2$ takes values in the finite
  set $\{1,2\}$, $2$-regularity implies $2$-automaticity~\cite{AS92}.
\end{proof}

\subsection{The case $r=3$}

To identify the two pivotal subsequences, we first locate them outside the correction set.

\begin{lemma}\label{lem:gaps}
  For $m\in\{5,8\}$ and all $k\ge 1$:\; $\varepsilon(m\cdot 3^k)=\varepsilon(m\cdot 3^k-1)=0$.
\end{lemma}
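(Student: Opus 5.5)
The plan is a pure descent argument using the triadic recurrences \eqref{eq:eps-rec} of Proposition~\ref{prop:eps-auto}. Both members of each pair, $m\cdot 3^k$ and $m\cdot 3^k-1$, lie in one residue pattern modulo $3$, so one step of \eqref{eq:eps-rec} sends each of them to the single index $m\cdot 3^{k-1}-1$. That index is again of the form ``$m\cdot 3^{j}-1$'', so the process iterates down to a small index where $\varepsilon$ is read off from the definition \eqref{eq:Ik}.

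\emph{Step 1 (one reduction step).} Fix $m\in\{5,8\}$ and $k\ge 1$, and put $m':=m\cdot 3^{k-1}$.
\begin{itemize}[label=\textbullet]
\item By the first rule of \eqref{eq:eps-rec}, $\varepsilon(3m')=\varepsilon(m'-1)$, that is, $\varepsilon(m\cdot 3^k)=\varepsilon(m\cdot 3^{k-1}-1)$.
\item Write $m\cdot 3^k-1=3(m'-1)+2$. The third rule then gives $\varepsilon(m\cdot 3^k-1)=\varepsilon(m'-1)=\varepsilon(m\cdot 3^{k-1}-1)$.
\end{itemize}
Both rules require their argument to be at least $2$. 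Here $m'\ge 5$ and $m'-1\ge 4$, so this holds.

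\emph{Step 2 (iteration).} Define $x_j:=\varepsilon(m\cdot 3^{j}-1)$ for $j\ge 0$. For $j\ge 1$,
\[
m\cdot 3^j-1=3(m\cdot 3^{j-1}-1)+2,
\]
and $m\cdot 3^{j-1}-1\ge 4\ge 2$. The third rule of \eqref{eq:eps-rec} therefore gives $x_j=x_{j-1}$. By induction $x_j=x_0=\varepsilon(m-1)$ for every $j$. Combined with Step~1, this yields
\[
\varepsilon(m\cdot 3^k)=\varepsilon(m\cdot 3^k-1)=\varepsilon(m-1)\qquad\text{for all } k\ge 1.
\]

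\emph{Step 3 (base values).} The index $m-1$ is $4$ for $m=5$ and $7$ for $m=8$. From \eqref{eq:Ik}, $I_0=\{5\}$, $I_1=[16,18]$, and every later $I_k$ lies above $18$. Hence $4\notin\mathcal I$ and $7\notin\mathcal I$, so $\varepsilon(4)=\varepsilon(7)=0$, which concludes the proof.

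As a cross-check for $m=5$, Lemma~\ref{lem:r3-gap} applied at rank $k+1\ge 2$ already gives the result directly. Indeed, $3^{k+1}-1\le 5\cdot 3^k-1<5\cdot 3^k$, so both indices lie in the interval $[3^{k+1}-1,\,5\cdot 3^{k}]$ on which that lemma says $\varepsilon$ vanishes.

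There is no real obstacle here. The only points needing care are that the hypothesis ``$m\ge 2$'' of \eqref{eq:eps-rec} holds at every step, and that the residue of $m\cdot 3^k-1$ modulo $3$ is $2$, so that the correct rule is applied.
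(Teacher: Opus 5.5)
Your proof is correct, but it takes a different route from the paper. The paper never uses the recurrences \eqref{eq:eps-rec}. It works directly from the closed form \eqref{eq:Ik}. It first checks $u_{k-1}<\ell_k$, so the $I_k$ are pairwise disjoint and ordered. It then verifies by explicit inequalities that $5\cdot 3^k-1$ and $5\cdot 3^k$ lie strictly in the gap between $I_{k-1}$ and $I_k$, and that $8\cdot 3^k-1$ and $8\cdot 3^k$ lie strictly in the gap between $I_k$ and $I_{k+1}$.

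You instead reduce both indices in one step to $m\cdot 3^{k-1}-1$. Its base-$3$ expansion is that of $m-1$ followed by $k-1$ twos, and you strip those twos with $\varepsilon(3x+2)=\varepsilon(x)$. Your check that the hypothesis $x\ge 2$ holds at every step is exactly the care needed, since the rule genuinely fails at $x=1$ ($\varepsilon(5)=1\ne\varepsilon(1)$).

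What your argument buys is brevity, no inequalities at all, and a more general statement. It proves $\varepsilon(m\cdot 3^k)=\varepsilon(m\cdot 3^k-1)=\varepsilon(m-1)$ for every $m\ge 3$ and $k\ge 1$, which is constant rather than merely ultimately periodic behaviour along these families. The choice $m\in\{5,8\}$ then enters only through $4,7\notin\mathcal{I}$; for instance, $m=6$ gives $\varepsilon(6\cdot 3^k)=\varepsilon(5)=1$.

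What the paper's argument buys is independence from Lemma~\ref{lem:selfsim} and Proposition~\ref{prop:eps-auto}: it needs only the definition of $\mathcal{I}$. It also gives the finer geometric picture of where these pivotal indices sit relative to the support of $\varepsilon$.
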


\begin{proof}
  Recall that $I_0=\{5\}$ and $I_k=[\ell_k,u_k]\cap\mathbb{Z}$ with
  $\ell_k=\lceil(11\cdot 3^k-1)/2\rceil$ and $u_k=\lfloor(13\cdot 3^k-3)/2\rfloor$
  for $k\ge 1$.
  The intervals are ordered by scale: $u_{k-1}<\ell_k$ for all $k\ge 1$, since
  $u_{k-1}=(13\cdot 3^{k-1}-3)/2$ and $\ell_k=(11\cdot 3^k-1)/2$ give
  $\ell_k - u_{k-1} = (11\cdot 3^k - 13\cdot 3^{k-1} + 2)/2 = (20\cdot 3^{k-1}+2)/2 > 0$.
  Hence the $I_k$ are pairwise disjoint, and $n\in\mathcal{I}$
  iff $n\in I_k$ for a unique $k$.

  For $m=5$ and $k\ge 1$, we show $5\cdot 3^k$ lies strictly between $I_{k-1}$ and $I_k$.
  On the left: $5\cdot 3^k > u_{k-1} = (13\cdot 3^{k-1}-3)/2$
  since $10\cdot 3^k = 30\cdot 3^{k-1} > 13\cdot 3^{k-1} - 3$.
  On the right: $5\cdot 3^k < \ell_k = (11\cdot 3^k-1)/2$
  since $10\cdot 3^k < 11\cdot 3^k - 1$, i.e.\ $1 < 3^k$, which holds for $k\ge 1$.
  Hence $5\cdot 3^k\notin\mathcal{I}$, and since $5\cdot 3^k - 1 < 5\cdot 3^k < \ell_k$
  and $5\cdot 3^k - 1 > u_{k-1}$, we get $5\cdot 3^k - 1\notin\mathcal{I}$ as well.

  For $m=8$ and $k\ge 1$, we show $8\cdot 3^k$ lies strictly between $I_k$ and $I_{k+1}$.
  On the left: $8\cdot 3^k > u_k = (13\cdot 3^k-3)/2$
  since $16\cdot 3^k > 13\cdot 3^k - 3$.
  On the right: $8\cdot 3^k < \ell_{k+1} = (11\cdot 3^{k+1}-1)/2$
  since $16\cdot 3^k < 33\cdot 3^k - 1$.
  Hence $8\cdot 3^k\notin\mathcal{I}$, and since $u_k < 8\cdot 3^k - 1 < 8\cdot 3^k < \ell_{k+1}$,
  we get $8\cdot 3^k - 1\notin\mathcal{I}$.
  Therefore $\varepsilon(5\cdot 3^k)=\varepsilon(5\cdot 3^k-1)
  =\varepsilon(8\cdot 3^k)=\varepsilon(8\cdot 3^k-1)=0$.
\end{proof}

\begin{theorem}\label{thm:ratio3}
  Let $(a(n))$ be the almost Golomb sequence of order $3$.
  For all $k\ge 1$:
  \[
    a(5\cdot 3^k) = 8\cdot 3^{k-1}, \quad
    \frac{a(5\cdot 3^k)}{5\cdot 3^k} = \frac{8}{15};
    \qquad
    a(8\cdot 3^k) = 15\cdot 3^{k-1}, \quad
    \frac{a(8\cdot 3^k)}{8\cdot 3^k} = \frac{5}{8}.
  \]
  Thus $a(n)/n$ does not converge.
\end{theorem}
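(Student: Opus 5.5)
The first identity needs no new work. Corollary~\ref{cor:r3-special} gives $a(5\cdot 3^{k-1})=8\cdot 3^{k-2}$ for every $k\ge 2$. Reindexing ($k\mapsto k+1$) yields $a(5\cdot 3^k)=8\cdot 3^{k-1}$ for all $k\ge 1$. Dividing gives the ratio $8/15$.

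For the second identity I will prove by induction on $k\ge 1$ a strengthened statement about the four consecutive values around $n_k:=8\cdot 3^k$:
\[
a(n_k-2)=15\cdot 3^{k-1}-1,\quad a(n_k-1)=a(n_k)=15\cdot 3^{k-1},\quad a(n_k+1)=15\cdot 3^{k-1}+1.
\]
The base case $k=1$ asks for $a(22)=14$, $a(23)=a(24)=15$, $a(25)=16$. The first three can be read off the listed initial terms. The value $a(25)=16$ is checked by direct computation; alternatively, apply \eqref{eq:r3b} at $n=8$, which gives $a(8)+a(7)+a(9)=5+5+6=16$.

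For the induction step, write $n=n_k$, so that $3n=n_{k+1}$. I will apply Theorem~\ref{thm:r3} at the indices $3n-2=3(n-1)+1$, $3n-1=3(n-1)+2$, $3n$ and $3n+1$. The only corrections that enter are $\varepsilon(n-1)$, in \eqref{eq:r3a} at rank $n$ and in \eqref{eq:r3c} at rank $n-1$. Lemma~\ref{lem:gaps} with $m=8$ gives $\varepsilon(8\cdot 3^k-1)=0$, so both vanish. Substituting the hypothesis then gives:
\begin{align*}
a(3n-2)&=a(n-2)+a(n-1)+a(n)=45\cdot 3^{k-1}-1=15\cdot 3^{k}-1,\\
a(3n-1)&=a(n-1)+a(n)+a(n+1)-1=15\cdot 3^{k},\\
a(3n)&=a(n-2)+a(n-1)+a(n)+1=15\cdot 3^{k},\\
a(3n+1)&=a(n-1)+a(n)+a(n+1)=15\cdot 3^{k}+1.
\end{align*}
These are exactly the four claimed values at rank $k+1$.

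The main obstacle is choosing the induction hypothesis. Tracking $a(n_k)$ alone does not close the induction: the formula for $a(3n-1)$ needs $a(n+1)$, and without knowing $d(n_k)=1$ it would give the wrong value. That is why I carry the four values $a(n_k-2),\dots,a(n_k+1)$ together. One must also check that the correction term needed is only $\varepsilon(n-1)$ and not $\varepsilon(n)$. This holds because \eqref{eq:r3c} is used only at rank $n-1$, and \eqref{eq:r3b} carries no correction.

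Finally, $a(n_k)/n_k=15\cdot 3^{k-1}/(8\cdot 3^k)=5/8$. Since $8/15\ne 5/8$, the ratio $a(n)/n$ has two distinct limit points and therefore does not converge.
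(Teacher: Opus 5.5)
Your argument for the $8\cdot 3^k$ family is the paper's argument: same four-term pattern, same base values, same applications of Theorem~\ref{thm:r3} at ranks $n-1$ and $n$. Your observation that only $\varepsilon(n-1)$ enters is correct and a little sharper than the paper, which quotes $\varepsilon(n)=\varepsilon(n-1)=0$.

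Where you differ is the $5\cdot 3^k$ family. The paper does not cite Corollary~\ref{cor:r3-special}. It runs the same four-term induction around $5\cdot 3^k$, propagating $a(5\cdot 3^k-2),\dots,a(5\cdot 3^k+1)=8\cdot 3^{k-1}-1,\ 8\cdot 3^{k-1},\ 8\cdot 3^{k-1},\ 8\cdot 3^{k-1}+1$ from the base values $a(13),\dots,a(16)=7,8,8,9$, with $\varepsilon(5\cdot 3^k-1)=0$ from Lemma~\ref{lem:gaps}.

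Your shortcut is a legitimate citation and saves half the work. However, it makes the result depend on the longer chain through Proposition~\ref{prop:r3-block}, whose inductive step is not airtight at the right edge. That step computes $a(5\cdot 3^k-1)$ from \eqref{eq:r3-good2} at $n=5\cdot 3^{k-1}-1$, which requires $a(5\cdot 3^{k-1}+1)$. That value lies outside the range covered by the proposition's hypothesis, and $a(5\cdot 3^{k+1})$ in turn depends on $a(5\cdot 3^k-1)$. The paper's four-term pattern carries exactly the missing value $a(5\cdot 3^k+1)$. So its route is uniform and self-contained, resting only on Theorem~\ref{thm:r3} and Lemma~\ref{lem:gaps}.

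Note also that the third identity in Corollary~\ref{cor:r3-special} is misstated: for instance $a(14)=8$, not $7$. You rely only on the middle identity, which is correct. Still, running your own induction verbatim for the $5\cdot 3^k$ family would avoid leaning on that corollary at all.
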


\begin{proof}
  We propagate the local pattern around $5\cdot 3^k$ by induction on $k$.
  The base case $k=1$ is verified directly (pattern around $15$).
  Assuming for some $k\ge 1$:
  \[
    a(5\cdot 3^k-2)=8\cdot 3^{k-1}-1,\;\;
    a(5\cdot 3^k-1)=8\cdot 3^{k-1},\;\;
    a(5\cdot 3^k)  =8\cdot 3^{k-1},\;\;
    a(5\cdot 3^k+1)=8\cdot 3^{k-1}+1,
  \]
  set $n=5\cdot 3^k$. By Lemma~\ref{lem:gaps}, $\varepsilon(n)=\varepsilon(n-1)=0$.
  Applying \eqref{eq:r3a}--\eqref{eq:r3c} (and the analogues with $n-1$):
  \begin{align*}
    a(3n-2) &= a(n-2)+a(n-1)+a(n)   = 8\cdot 3^k-1,\\
    a(3n-1) &= a(n-1)+a(n)+a(n+1)-1 = 8\cdot 3^k,\\
    a(3n)   &= a(n-2)+a(n-1)+a(n)+1 = 8\cdot 3^k,\\
    a(3n+1) &= a(n-1)+a(n)+a(n+1)   = 8\cdot 3^k+1,
  \end{align*}
  which is the same pattern at rank $k+1$ around $3n=5\cdot 3^{k+1}$.

  For the family $a(8\cdot 3^k)=15\cdot 3^{k-1}$, we propagate the local pattern
  \[
    a(8\cdot 3^k-2)=15\cdot 3^{k-1}-1,\quad
    a(8\cdot 3^k-1)=a(8\cdot 3^k)=15\cdot 3^{k-1},\quad
    a(8\cdot 3^k+1)=15\cdot 3^{k-1}+1.
  \]
  The base case $k=1$ is read off directly: $a(22)=14$, $a(23)=a(24)=15$, $a(25)=16$.
  Assuming the pattern at rank $k$, set $n=8\cdot 3^k$.
  By Lemma~\ref{lem:gaps}, $\varepsilon(n)=\varepsilon(n-1)=0$.
  Applying \eqref{eq:r3a}--\eqref{eq:r3c} at $n-1$ and $n$:
  \begin{align*}
    a(3n-2) &= a(n-2)+a(n-1)+a(n)   = 15\cdot 3^k-1,\\
    a(3n-1) &= a(n-1)+a(n)+a(n+1)-1 = 15\cdot 3^k,\\
    a(3n)   &= a(n-2)+a(n-1)+a(n)+1 = 15\cdot 3^k,\\
    a(3n+1) &= a(n-1)+a(n)+a(n+1)   = 15\cdot 3^k+1,
  \end{align*}
  which is the same pattern at rank $k+1$ around $3n=8\cdot 3^{k+1}$.
  Since $8/15\ne 5/8$, the ratio $a(n)/n$ does not converge.
\end{proof}

\subsection{The case $r=4$}

The quaternary case exhibits the same oscillatory behavior as the triadic case,
but the two pivotal subsequences are now identified via the affine recurrence
$u_{k+1}=4u_k - 2$.

\begin{theorem}\label{thm:ratio4}
  Let $(a(n))$ be the almost Golomb sequence of order $4$.
  The sequence $a(n)/n$ does not converge. Setting $A_k=a(4^k)$,
  the recurrence
  \begin{equation}\label{eq:r4-rec}
    A_{k+1} = 4A_k - 2
  \end{equation}
  holds for all $k\ge 3$, with general solution $A_k = C\cdot 4^k + \tfrac{2}{3}$.
  The two families
  \[
    \frac{a(4^k)}{4^k} \to \frac{25}{48}, \qquad
    \frac{a(7\cdot 4^k)}{7\cdot 4^k} \to \frac{10}{21}
  \]
  have distinct limits, so $a(n)/n$ has at least two limit points.
\end{theorem}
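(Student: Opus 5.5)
We plan to imitate the proof of Theorem~\ref{thm:ratio3}: propagate a small local pattern of values of $a$ around $4^k$ (and around $7\cdot 4^k$) from rank $k$ to rank $k+1$ using the denesting formulas \eqref{eq:r4a}--\eqref{eq:r4d}. The new ingredient compared with $r=3$ is that the correction bits $\varepsilon_i$ are no longer known in closed form. We therefore first evaluate them along the relevant geometric progressions.

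The first step is to apply Lemma~\ref{lem:auto-geometric} to the $4$-automatic sequences $\varepsilon_0,\dots,\varepsilon_3$ of Theorem~\ref{thm:r4eps}. We use the words $P=1$ (respectively $P=13$, since $7=(13)_4$) and suffixes $Q$ encoding the small offsets $-3,\dots,+2$. Positions such as $4^k-1=(3^k)_4$ are not literally of the form $P0^kQ$. For those we use instead the recurrences \eqref{eq:e0_0}--\eqref{eq:e3_3} directly. For example, \eqref{eq:e0_3} and \eqref{eq:e1_3} relate $\varepsilon_i(4m+3)$ to $\varepsilon_i(m+1)$, and with $m=4^{k-1}-1$ this gives $\varepsilon_i(4^k-1)=\varepsilon_i(4^{k-1})$. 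Similarly, \eqref{eq:e0_2}, \eqref{eq:e1_2}, \eqref{eq:e2_2}, \eqref{eq:e3_2} give self-similarity at residue $2$. Running these recurrences on the finitely many bits $\varepsilon_i(4^k+\delta)$ with $|\delta|\le 3$, we expect them to become eventually constant (period $1$) for $k\ge 3$. The initial table in Theorem~\ref{thm:r4eps}, extended slightly, fixes the constants.

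The second step substitutes these stabilized bits into \eqref{eq:r4a}--\eqref{eq:r4d} at $n=4^k$ and nearby indices. This produces a closed local pattern. Concretely, we expect values $a(4^k+\delta)=A_k+c_\delta$ for a fixed small offset vector $(c_\delta)$ that is reproduced at rank $k+1$. Reading off $a(4\cdot 4^k)$ from \eqref{eq:r4a} gives
\[
A_{k+1}=\sum_{\delta=-3}^{0}(A_k+c_\delta)+1+\varepsilon_0(4^k),
\]
which must collapse to $4A_k-2$, i.e.\ $\sum c_\delta+1+\varepsilon_0=-2$. The base case $k=3$ is checked numerically from the initial segment of $a$. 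The affine recurrence \eqref{eq:r4-rec} then gives $A_k=C\,4^k+2/3$, and the known value $A_3=a(64)$ determines $C=25/48$. Hence $a(4^k)/4^k\to 25/48$.

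The third step repeats the argument for $n=7\cdot 4^k$ with prefix $13$. This yields $B_{k+1}=4B_k+c$ for $B_k=a(7\cdot 4^k)$, with a constant $c$ read off the same way. So $B_k/(7\cdot 4^k)$ converges to a limit fixed by one initial value, which should be $10/21$. Since $25/48\ne 10/21$, non-convergence follows.

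The main obstacle is the first step. We must verify that the correction bits at all offsets needed, including the window $n-3,\dots,n+2$ and the shifted indices used when propagating the neighbouring values $a(4n+i)$, genuinely stabilize rather than having period $2$ or longer. We must also confirm that the offset pattern is self-reproducing. If the recurrences mix $\varepsilon_i(m\pm1)$ in a way that is hard to track by hand, we would extract the periodic pattern from the DFAO of Appendix~\ref{app:dfao} via Lemma~\ref{lem:auto-geometric}. A period-$p$ pattern would still give a recurrence $A_{k+p}=4^pA_k+\text{const}$ and the same limit argument.
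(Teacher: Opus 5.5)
Your proposal is correct and is essentially the paper's own argument. Both use ultimate periodicity of the correction bits along $4^k$ and $7\cdot 4^k$ (via Lemma~\ref{lem:auto-geometric}, or by iterating the recurrences of Theorem~\ref{thm:r4eps} directly), a self-reproducing local pattern of $a$, and substitution into \eqref{eq:r4a} to get the affine recurrence, solved with $A_3=a(64)=34$. Your hand iteration does close with period $1$: the vectors $(\varepsilon_0,\varepsilon_1,\varepsilon_2,\varepsilon_3)$ at $(4^k-1,\,4^k)$ are $(1,0,1,0),(1,0,0,1)$ for all $k\ge 2$, and at $(7\cdot 4^k-1,\,7\cdot 4^k)$ both equal $(1,0,1,0)$ with $\varepsilon_1(7\cdot 4^k+1)=0$, which yields $B_{k+1}=4B_k-2$ and $B_k=(10\cdot 4^k+2)/3$.
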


\begin{proof}
  Since $4^k=[1\,0^k]_4$ and $7\cdot 4^k=[13\,0^k]_4$ in base~$4$,
  Lemma~\ref{lem:auto-geometric} implies that the correction patterns along
  these two families are ultimately periodic.
  Machine evaluation of the certified recurrence system
  of Theorem~\ref{thm:r4eps} yields, from $k=3$ onward,
  \[
    (\varepsilon_0(4^k),\,\varepsilon_1(4^k),\,\varepsilon_2(4^k),\,\varepsilon_3(4^k))
    =(1,0,0,1),
  \]
  together with the fixed local neighbourhood pattern
  \[
    a(4^k+j) = A_k + \delta(j) \qquad (-4\le j\le 4),
  \]
  where
  \[
    \delta(j) = \begin{cases}
      -2 & j\in\{-4,-3\},\\
      -1 & j\in\{-2,-1\},\\
      \phantom{-}0 & j\in\{0,1,2\},\\
      +1 & j\in\{3,4\}.
    \end{cases}
  \]
  Substituting into \eqref{eq:r4a} with $n=4^k$ gives
  $A_{k+1} = 4A_k - 2$.
  With $A_3 = a(64) = 34$, the solution is
  $A_k = \tfrac{25}{48}\cdot 4^k + \tfrac{2}{3}$.
  The same evaluation along $7\cdot 4^k$, with $B_k:=a(7\cdot 4^k)$, gives
  $B_k/(7\cdot 4^k)\to 10/21$.
  Since $25/48 \ne 10/21$, the ratio $a(n)/n$ does not converge.
  (The local neighbourhood patterns can be verified by
  iterating the certified recurrences of Theorem~\ref{thm:r4eps}.)
\end{proof}

\begin{remark}
  The affine recurrence $u_{k+1}=4u_k-2$ (with fixed point $2/3$) contrasts with
  the purely multiplicative recurrences of orders $2$ and $3$.
  Consequently the two limiting ratios $25/48$ and $10/21$ do not have the
  ``$p/q$ and $q/p$'' symmetry seen in the triadic case ($8/15$ and $5/8$).
\end{remark}

\subsection{The case $r=5$}

\begin{theorem}\label{thm:ratio5}
The ratio $a(n)/n$ for the almost Golomb sequence of order $5$ does not converge.
Setting $A_k=a(5^k)$ and $B_k=a(2\cdot 5^k)$, one has for all $k\ge 2$:
\begin{equation}\label{eq:r5rec}
  A_{k+1} =
  \begin{cases}
    5A_k - 1, & k\ \text{even},\\
    5A_k - 4, & k\ \text{odd},
  \end{cases}
  \qquad
  B_{k+1} = 5B_k - 1.
\end{equation}
Consequently
\[
  \lim_{\substack{k\to\infty\\k\text{ even}}} \frac{a(5^k)}{5^k} = \frac{93}{200},
  \qquad
  \lim_{k\to\infty}\frac{a(2\cdot 5^k)}{2\cdot 5^k} = \frac{87}{200},
\]
and these two limits are distinct.
\end{theorem}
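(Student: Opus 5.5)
The approach mirrors the proof of Theorem~\ref{thm:ratio4}. The denesting formula \eqref{eq:r5a0} of Theorem~\ref{thm:r5} is evaluated at $n=5^k$ and at $n=2\cdot 5^k$. This gives
\[
A_{k+1}=a(5^k)+a(5^k-1)+\cdots+a(5^k-4)+2+\varepsilon(5^k),
\]
and similarly for $B_{k+1}$. To turn these into one-step affine recurrences, I need two ingredients: the value of the corrector $\varepsilon$ along each family, and a fixed local neighbourhood pattern $a(5^k-j)=A_k-\delta_A(j)$ for $0\le j\le 4$ (resp.\ $a(2\cdot5^k-j)=B_k-\delta_B(j)$). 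Then $A_{k+1}=5A_k-\sum_j\delta_A(j)+2+\varepsilon(5^k)$, and the constant must come out to $-1$ or $-4$.

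The correctors come first. Since $5^k=[1\,0^k]_5$ and $2\cdot 5^k=[2\,0^k]_5$, Lemma~\ref{lem:auto-geometric} shows that $U$ and its shifts $U(5^k-j)$ are ultimately periodic in $k$. These are read as $[0\,4^k]$-type words, which can be handled by the same finite-state argument with suffix $4^k$, or more directly by the recurrences of Theorem~\ref{thm:r5eps}. Concretely, \eqref{eq:r5eps0} and \eqref{eq:r5eta024} give $\varepsilon(5m)=\varepsilon(m-1)(1-\varepsilon(m))$ and $\eta(5m)=\varepsilon(m)$. Applied to $m=5^k$ and $m=5^k-1$ (using $5(5^k-1)+4=5^{k+1}-1$, so $\varepsilon(5^{k+1}-1)=\eta(5^k-1)$ and $\eta(5^{k+1}-1)=\varepsilon(5^k-1)$), these yield a small closed dynamical system on the pairs $(U(5^k),U(5^k-1))$. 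I expect the $\eta/\varepsilon$ swap at residue $4$ to produce period $2$ in $k$, which would explain the parity dependence in the $A$ recurrence. For $2\cdot5^k$ the analogous system should be eventually constant, giving the single recurrence $B_{k+1}=5B_k-1$.

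Next, the local neighbourhood patterns. I propagate a pattern window of $a$ around $5^k$ (say $-8\le j\le 4$) by induction on $k$, exactly as in Theorem~\ref{thm:ratio3}. The values $a(5^{k+1}+j)$ for small $|j|$ are expressed through \eqref{eq:r5a0}--\eqref{eq:r5a4} at $n=5^k-2,5^k-1,5^k$, in terms of the window at rank $k$ together with the correctors $\varepsilon,\eta,\theta,\varepsilon_4$ there. Because the correctors along the family have period dividing $2$, I run the induction on a two-step pattern: one even-rank and one odd-rank pattern, each mapped to the other. The base cases $k=2,3$ are checked from the computed sequence.

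Finally, I solve the recurrences. For $B$, we get $B_k=c\,5^k+\tfrac14$, so $B_k/(2\cdot5^k)$ converges, with the constant fixed by $B_2=a(50)$ to give $87/200$. For $A$, composing two steps gives $A_{k+2}=25A_k-9$ or $25A_k-21$ depending on parity, so along even $k$ the ratio converges to a constant fixed by $A_2=a(25)$, claimed to be $93/200$. Since $93\ne87$, $a(n)/n$ has two distinct limit points. \textbf{The main obstacle} is the second step: guaranteeing that the local window closes under the induction, including the corrector values $\theta,\varepsilon_4$ at shifted arguments. I would first try to do this with Walnut-certified DFAO evaluation as in Theorem~\ref{thm:ratio4}, and fall back on hand propagation only if needed.
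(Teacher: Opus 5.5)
Your plan is correct and is essentially the paper's proof. Both substitute the corrector values and the parity-dependent local pattern of $a$ just below $5^k$ and $2\cdot 5^k$ into \eqref{eq:r5a0}, then solve the affine recurrences from $A_2=a(25)=12$ and $B_2=a(50)=22$. Your closed recursion for $(U(5^k),U(5^k-1))$ gives $U(5^k)$ alternating between $(1,0)$ and $(0,1)$, and $U(2\cdot 5^k)=(0,0)$; it is a hand-checkable replacement for the paper's machine evaluation of the correctors.

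One caution applies if you propagate the local window by hand rather than with Walnut. Formula \eqref{eq:r5a4} as printed has a sign slip and should read $a(5n+4)=T_5(n+4)-2+\varepsilon_4(n)$; for example, $a(124)=58=T_5(28)-2-3$. With the printed version, the residue-$4$ term $a(5^{k+1}-1)$ would exceed $a(5^{k+1})$, and your induction would not close.
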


\begin{proof}
Since $5^k=[1\,0^k]_5$ and $2\cdot 5^k=[2\,0^k]_5$ in base~$5$,
Lemma~\ref{lem:auto-geometric} implies that the correction patterns along
these families are ultimately periodic.
Machine evaluation of the certified recurrence system of
Theorem~\ref{thm:r5eps} yields, for $k\ge 2$, the alternating pattern
$U(5^k)\in\{(1,0),(0,1)\}$ with period~$2$, and a constant correction pattern
along $2\cdot 5^k$.
The same evaluation gives the corresponding local neighbourhood patterns of $a$,
from which the affine recurrences~\eqref{eq:r5rec} follow by substitution into
\eqref{eq:r5a0}--\eqref{eq:r5a4}.
For even $k$, $A_k/5^k \to 93/200$.
For $B_k$, $B_k/(2\cdot 5^k)\to 87/200$.
Since $93/200\ne 87/200$, the ratio $a(n)/n$
has at least two limit points.
(The local patterns follows by iterating the certified recurrences of Theorem~\ref{thm:r5eps}.)
\end{proof}


\subsection{Summary}

\medskip
\begin{center}
\renewcommand{\arraystretch}{1.6}
\begin{tabular}{c|l|c|l|c}
$r$ & Pivotal family 1 & Limit & Pivotal family 2 & Limit \\\hline
$2$ & $2^k$ & $\dfrac{3}{4}$ & $3\cdot 2^{k-1}$ & $\dfrac{2}{3}$ \\[4pt]
$3$ & $5\cdot 3^k$ & $\dfrac{8}{15}$ & $8\cdot 3^k$ & $\dfrac{5}{8}$ \\[4pt]
$4$ & $4^k$ & $\dfrac{25}{48}$ & $7\cdot 4^k$ & $\dfrac{10}{21}$ \\[4pt]
$5$ & $5^k$\;($k$ even) & $\dfrac{93}{200}$ & $2\cdot 5^k$ & $\dfrac{87}{200}$ \\
\end{tabular}
\end{center}
\medskip

In each case both limits lie in $(0,1)$, confirming linear growth $a(n)\asymp n$
(meaning $C_1 n \le a(n) \le C_2 n$ for positive constants $C_1,C_2$)
with an oscillating prefactor. For $r=2$, the Ces\`aro means also fail to converge
(Theorem~\ref{thm:ratio2}); the behavior for $r\ge 3$ is open
(see Section~\ref{sec:open}).

\section{Combinatorial interpretations}\label{sec:combinatorial}

The denesting formulas of Sections~\ref{sec:r2} and~\ref{sec:r3}
encode algebraic identities. In this section we extract from them
three combinatorial properties that reveal the
structure of the difference sequence $d(n)=a(n+1)-a(n)\in\{0,1\}$
in the cases $r=2$ and $r=3$.

\subsection{A second-bit characterization of multiplicities ($r=2$)}

Proposition~\ref{prop:N2_multiplicity} shows that the multiplicity
$N_2(m)$ depends only on the iterated ceiling map
$m\mapsto\lceil m/2\rceil$. We now translate this into a direct
binary characterization.

\begin{proposition}\label{prop:second-bit}
For every $m\ge 4$, write $m-1=(1\,b_2\,b_3\cdots b_\ell)_2$ with
$b_2$ denoting the second most significant bit. Then
\[
  N_2(m)=\begin{cases}
    2 & \text{if } b_2=1,\\
    1 & \text{if } b_2=0.
  \end{cases}
\]
\end{proposition}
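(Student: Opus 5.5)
The plan is to start from the explicit description established in the proof of Proposition~\ref{prop:N2_multiplicity}: $N_2(v)=2$ if and only if $v\in(3\cdot 2^{k-2},\,2^k]$ for some $k\ge 1$, and $N_2(v)=1$ otherwise. The statement then reduces to an arithmetic fact. For $m\ge 4$, $m\in(3\cdot 2^{k-2},2^k]$ for some $k$ exactly when $m-1\in[3\cdot 2^{k-2},\,2^k-1]$, i.e.\ when $m-1$ lies in the upper half of the dyadic block $[2^{k-1},2^k)$. That is precisely the condition that the second most significant bit of $m-1$ is $1$.

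\emph{Step 1.} Fix $m\ge 4$ and let $\ell$ be the bit length of $m-1$. Then $m-1\in[2^{\ell-1},2^\ell)$ with $\ell\ge 2$, so $b_2$ is well defined. Moreover $b_2=1$ if and only if $m-1\ge 2^{\ell-1}+2^{\ell-2}=3\cdot 2^{\ell-2}$.

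\emph{Step 2.} Suppose $b_2=1$. Then $3\cdot 2^{\ell-2}\le m-1\le 2^\ell-1$, so $3\cdot 2^{\ell-2}<m\le 2^\ell$. Taking $k=\ell$ gives $N_2(m)=2$.

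\emph{Step 3.} Suppose $b_2=0$. Then $2^{\ell-1}+1\le m\le 3\cdot 2^{\ell-2}$. I must check that $m$ lies in none of the intervals $(3\cdot 2^{k-2},2^k]$. These intervals are disjoint and ordered: $2^k<3\cdot 2^{k-1}$, so each one ends before the next begins. For $k\le \ell-1$ we have $2^k\le 2^{\ell-1}<m$, and for $k\ge\ell$ we have $3\cdot 2^{k-2}\ge 3\cdot 2^{\ell-2}\ge m$. So $m$ is excluded from every interval and $N_2(m)=1$.

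The only delicate point is the endpoint bookkeeping at $m=2^\ell$ and $m=3\cdot 2^{\ell-2}+1$, which is why the statement uses $m-1$ rather than $m$. I would double-check it against small cases from the listed terms: $m=4$ gives $m-1=11_2$, $b_2=1$, and indeed $4$ appears twice; $m=5$ gives $100_2$, $b_2=0$, and $5$ appears once; $m=7$ gives $110_2$, $b_2=1$, and $7$ appears twice.

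If one prefers not to rely on the informally argued characterization inside the proof of Proposition~\ref{prop:N2_multiplicity}, an alternative is to rederive it directly from Proposition~\ref{prop:dyadic}. Alternatively, one can induct on $m$ using \eqref{eq:N2rec}: the map $m\mapsto\lceil m/2\rceil$ corresponds to $m-1\mapsto\lfloor (m-1)/2\rfloor$, which deletes the last bit of $m-1$ and so preserves $b_2$ as long as the length stays at least $2$. The base cases $m\in\{4,5,6,7\}$ are then checked numerically.
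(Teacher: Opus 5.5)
Your proof is correct, but your main route differs from the paper's. The paper argues from the recursion $N_2(m)=N_2(\lceil m/2\rceil)$ for $m\ge 4$ in Proposition~\ref{prop:N2_multiplicity}. With $k=m-1$, the map $m\mapsto\lceil m/2\rceil$ becomes $k\mapsto\lfloor k/2\rfloor$, which deletes the last bit. After $\ell-2$ steps one reaches $m'\in\{3,4\}$ according to $b_2$, and $N_2(3)=1$, $N_2(4)=N_2(2)=2$. This is exactly the alternative you sketch in your last paragraph.

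Your primary argument instead reads the result off the interval description ($N_2(v)=2$ iff $v\in(3\cdot 2^{k-2},2^k]$ for some $k\ge 1$). That is the more primitive fact: it comes directly from Proposition~\ref{prop:dyadic}, whereas the paper deduces the recursion from it by a fairly informal descent argument. So your version rests on less and reduces to a clean two-case endpoint check. The paper's version is shorter once the recursion is granted, and it exhibits the digit-deletion mechanism behind the $2$-automaticity of $N_2$.

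Your last paragraph calls the interval description informally argued, but it is derived rigorously from Proposition~\ref{prop:dyadic} up to one point the paper leaves implicit; make that point explicit. For $k\ge 2$, the block $[2^k,2^{k+1})$ takes exactly the values $3\cdot 2^{k-2},\ldots,3\cdot 2^{k-1}-1$. Consecutive blocks therefore abut without sharing a value, so in particular $3\cdot 2^{k-2}$ occurs only once.
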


\begin{proof}
By Proposition~\ref{prop:N2_multiplicity},
$N_2(m)=N_2(\lceil m/2\rceil)$ for $m\ge 4$.
Set $k=m-1\ge 3$ and write $k=(1\,b_2\cdots b_\ell)_2$.
The operation $m\mapsto\lceil m/2\rceil$ translates to
$k\mapsto\lfloor k/2\rfloor$, which erases the least significant bit.
After $\ell-2$ iterations the two-digit prefix
$(1\,b_2)_2$ is reached, corresponding to
$m'=k'+1$ where $k'\in\{2,3\}$.
If $b_2=1$: the descent reaches $k'=3$, hence $m'=4$, $N_2(4)=N_2(2)=2$.
If $b_2=0$: the descent reaches $k'=2$, hence $m'=3$, $N_2(3)=1$.
\end{proof}

\begin{remark}
The almost Golomb sequence of order $2$ is thus the unique nondecreasing
sequence of positive integers in which each value $m\ge 4$ is repeated
twice if the second most significant bit of $(m-1)$ is $1$, and once otherwise.
The asymptotic density of values appearing twice is
$\lim_{N\to\infty}|\{m\le N:N_2(m)=2\}|/N=1/2$,
which governs the oscillation of $a(n)/n$ between $2/3$ and $3/4$.
\end{remark}

\subsection{A Boolean cellular automaton ($r=2$)}

The denesting theorem (Theorem~\ref{thm:r2}) translates into a
remarkably simple rule for the difference sequence.

\begin{proposition}\label{prop:AND-automaton}
The difference sequence $d(n)=a(n+1)-a(n)\in\{0,1\}$ of the almost
Golomb sequence of order $2$ satisfies, for all $n\ge 2$:
\begin{equation}\label{eq:AND}
  d(2n)=1,\qquad d(2n+1)=d(n)\cdot d(n+1).
\end{equation}
\end{proposition}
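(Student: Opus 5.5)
The plan is to read both identities directly off the denesting formulas of Theorem~\ref{thm:r2}, by differencing them. I first take consecutive terms at even and odd positions and subtract.

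For the even identity, note that $d(2n)=a(2n+1)-a(2n)$. By \eqref{eq:r2odd} and \eqref{eq:r2even}, for $n\ge 2$ this equals
\[
\bigl(a(n)+a(n+1)\bigr)-\bigl(a(n)+a(n+1)-1\bigr)=1 .
\]
So the first identity is immediate.

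For the odd identity, $d(2n+1)=a(2n+2)-a(2n+1)$. Applying \eqref{eq:r2even} at $n+1$ and \eqref{eq:r2odd} at $n$ gives
\[
d(2n+1)=a(n+1)+a(n+2)-1-a(n)-a(n+1)=a(n+2)-a(n)-1=d(n)+d(n+1)-1 .
\]
The difference values $d(n),d(n+1)$ lie in $\{0,1\}$ by Theorem~\ref{thm:global_run}(i). Hence $d(n)+d(n+1)-1\in\{-1,0,1\}$. It coincides with the product $d(n)\,d(n+1)$ except when $d(n)=d(n+1)=0$, where it would equal $-1$.

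That excluded case is the only real content. I dispose of it in two ways, both giving the same conclusion. First, $d(2n+1)$ is itself in $\{0,1\}$, again by Theorem~\ref{thm:global_run}(i), so the value $-1$ cannot occur. This forces $d(n)+d(n+1)\ge 1$, and then $d(n)+d(n+1)-1=d(n)d(n+1)$ for bits with $d(n)+d(n+1)\ge1$. Second, as a consistency check, the same fact follows from the run structure: $d(n)=d(n+1)=0$ would mean $a(n)=a(n+1)=a(n+2)$, a run of length at least $3$. This contradicts $N_2\in\{1,2\}$ from Proposition~\ref{prop:N2_multiplicity}, equivalently Lemma~\ref{lem:runlength-d}, which bounds run lengths by $r=2$.

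The range conditions need a small check. Formula \eqref{eq:r2odd} is stated for $n\ge2$ and \eqref{eq:r2even} for $n\ge1$, so both uses are valid for $n\ge2$. If needed, I would verify the smallest case $n=2$ directly against the listed initial terms: $d(4)=a(5)-a(4)=1$ and $d(5)=a(6)-a(5)=0=d(2)d(3)=1\cdot 0$. I expect no genuine obstacle; the only step requiring thought is excluding the double-zero case, which is settled by the bound on the differences.
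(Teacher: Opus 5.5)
Your proof is correct and follows the paper's argument: difference the two denesting formulas to get $d(2n)=1$ and $d(2n+1)=d(n)+d(n+1)-1$, then exclude $d(n)=d(n+1)=0$ via the run-length bound, which is exactly your second argument. Your first argument, reading $d(n)+d(n+1)\ge 1$ directly off $d(2n+1)\ge 0$, is an equally valid shortcut. One cosmetic slip in your $n=2$ check: $d(2)=0$ and $d(3)=1$, so the product is $0\cdot 1$ rather than $1\cdot 0$.
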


\begin{proof}
From Theorem~\ref{thm:r2} (valid for $n\ge 2$):
\begin{align*}
  d(2n) &= a(2n+1)-a(2n) = \bigl[a(n)+a(n+1)\bigr]-\bigl[a(n)+a(n+1)-1\bigr]=1,\\
  d(2n+1) &= a(2n+2)-a(2n+1)
           = \bigl[a(n+1)+a(n+2)-1\bigr]-\bigl[a(n)+a(n+1)\bigr]\\
           &= d(n+1)+d(n)-1.
\end{align*}
Since $d\in\{0,1\}$ and $N_2(m)\in\{1,2\}$
(Proposition~\ref{prop:N2_multiplicity}), run lengths are at most~$2$,
so $d(n)+d(n+1)\ge 1$ for all $n\ge 2$.
Hence $d(n)+d(n+1)-1\in\{0,1\}$, and the identity
$x+y-1=xy$ holds for all $x,y\in\{0,1\}$ with $x+y\ge 1$.
\end{proof}

\begin{remark}
Formula~\eqref{eq:AND} shows that $d$ is generated by a multiplicative
cellular automaton: every even position carries the value $1$,
and every odd position is the Boolean AND of its two ``parent'' values
at positions $n$ and $n+1$.
The sequence $a(n)=\sum_{k=0}^{n-1}d(k)$ is thus the
accumulation function of this automaton.
\end{remark}

\subsection{A palindromic substitution rule ($r=3$)}

For $r=3$, the triadic structure of the difference sequence
admits a palindromic decomposition.

\begin{proposition}\label{prop:palindromic}
For every $n\ge 3$, define the triadic block
$B_n=(d(3n-2),\,d(3n-1),\,d(3n))$.
Then:
\begin{enumerate}[label=(\roman*)]
  \item $B_n$ is palindromic: $d(3n-2)=d(3n)$.
  \item The sum $d(3n-2)+d(3n-1)+d(3n)=L_n$.
  \item The block $B_n$ is determined by $L_n$ as follows:
    \[
      B_n=\begin{cases}
        (0,1,0) & \text{if } L_n=1,\\
        (1,0,1) & \text{if } L_n=2,\\
        (1,1,1) & \text{if } L_n=3.
      \end{cases}
    \]
    In particular, $L_n=3$ if and only if $\varepsilon(n-1)=1$.
\end{enumerate}
\end{proposition}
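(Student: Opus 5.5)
The plan is to compute the three differences directly from the triadic denesting formulas of Theorem~\ref{thm:r3}, applied at ranks $n-1$ and $n$, and then use the run-length invariant~\eqref{eq:r3-Linv} to pin down the block. Since $n\ge 3$ is needed both for Theorem~\ref{thm:r3} at rank $n-1\ge 2$ and for $L_n$ to be a run length, the small cases will be absorbed by that range, and any initial exceptions will be checked by direct computation from the listed terms.

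First I write $d(3n-2)=a(3n-1)-a(3n-2)$, $d(3n-1)=a(3n)-a(3n-1)$ and $d(3n)=a(3n+1)-a(3n)$. The values $a(3n-2)=a(3(n-1)+1)$ and $a(3n-1)=a(3(n-1)+2)$ come from~\eqref{eq:r3b}--\eqref{eq:r3c} at rank $n-1$, while $a(3n)$ and $a(3n+1)$ come from~\eqref{eq:r3a}--\eqref{eq:r3b} at rank $n$. Substituting and telescoping should give
\[
d(3n-2)=d(n)-1-\varepsilon(n-1)+d(n-2)+d(n-1)\cdot 0+\cdots,
\]
and more precisely I expect $d(3n-2)=L_n-1-\varepsilon(n-1)$, $d(3n-1)=2+2\varepsilon(n-1)-L_n$ and $d(3n)=L_n-1-\varepsilon(n-1)$, where $L_n=d(n-2)+d(n-1)+d(n)$ as in~\eqref{eq:Ldef}. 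The terms $a(n-2),a(n-1),a(n),a(n+1)$ all cancel, leaving only $a(n+1)-a(n-2)$ and the $\varepsilon(n-1)$ corrections. From these formulas, (i) is immediate because $d(3n-2)$ and $d(3n)$ are given by the same expression, and (ii) follows by summing the three: $2(L_n-1-\varepsilon(n-1))+2+2\varepsilon(n-1)-L_n=L_n$.

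For (iii) I use~\eqref{eq:r3-Linv}. If $\varepsilon(n-1)=0$ then $L_n\in\{1,2\}$, giving $(0,1,0)$ for $L_n=1$ and $(1,0,1)$ for $L_n=2$. If $\varepsilon(n-1)=1$ then $L_n=3$ and the block is $(1,1,1)$. Hence $L_n=3$ holds exactly when $\varepsilon(n-1)=1$, since $\varepsilon(n-1)=0$ forces $L_n\le 2$. This also shows the block depends only on $L_n$.

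The main obstacle is the careful bookkeeping in the substitution: which shifts of $a$ and which $\varepsilon$ index appear at rank $n-1$ versus rank $n$. In particular, the $-1-\varepsilon(n-1)$ in $a(3(n-1)+2)$ must match the $+1+\varepsilon(n-1)$ in $a(3n)$. That matching is what makes the result expressible in $L_n$ alone, so I would double-check it against the small values listed for $a$.
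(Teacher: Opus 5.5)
Your proposal is correct and follows essentially the same route as the paper: substitute \eqref{eq:r3a}--\eqref{eq:r3c} at ranks $n-1$ and $n$ to get $d(3n-2)=d(3n)=L_n-1-\varepsilon(n-1)$ and $d(3n-1)=2-L_n+2\varepsilon(n-1)$. From these you read off (i)--(iii) using the invariant \eqref{eq:r3-Linv}.

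The only blemish is the garbled intermediate display for $d(3n-2)$ (the one with $d(n-1)\cdot 0+\cdots$). Drop it in favour of the exact formula you state next. You also do not need a separate check of small cases, since $n\ge 3$ already gives $n-1\ge 2$, which is the range of Theorem~\ref{thm:r3}.
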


\begin{proof}
Using the denesting formulas \eqref{eq:r3a}--\eqref{eq:r3c}
at ranks $n-1$ and $n$:
\begin{align*}
  d(3n-2) &= a(3(n{-}1){+}2)-a(3(n{-}1){+}1) \\
          &= \bigl[a(n{-}1)+a(n)+a(n{+}1)-1-\varepsilon(n{-}1)\bigr]
             - \bigl[a(n{-}2)+a(n{-}1)+a(n)\bigr]\\
          &= a(n{+}1)-a(n{-}2)-1-\varepsilon(n{-}1) = L_n-1-\varepsilon(n{-}1).
\end{align*}
By the same method:
\begin{align*}
  d(3n-1) &= a(3n)-a(3(n{-}1){+}2) \\
          &= \bigl[a(n{-}2)+a(n{-}1)+a(n)+1+\varepsilon(n{-}1)\bigr]\\
          &\quad - \bigl[a(n{-}1)+a(n)+a(n{+}1)-1-\varepsilon(n{-}1)\bigr]\\
          &= 2-L_n+2\varepsilon(n{-}1).
\end{align*}
And:
\[
  d(3n) = a(3n{+}1)-a(3n) = L_n-1-\varepsilon(n{-}1) = d(3n{-}2).
\]
This proves (i). The sum is
$2(L_n-1-\varepsilon(n{-}1))+(2-L_n+2\varepsilon(n{-}1))=L_n$,
proving (ii).

For (iii), the invariant~\eqref{eq:r3-Linv} gives $L_n\le 2+\varepsilon(n{-}1)$
with $L_n=3\iff\varepsilon(n{-}1)=1$, so:
if $L_n=1$ (hence $\varepsilon(n{-}1)=0$): $d(3n{-}2)=d(3n)=0$, $d(3n{-}1)=1$;
if $L_n=2$ (hence $\varepsilon(n{-}1)=0$): $d(3n{-}2)=d(3n)=1$, $d(3n{-}1)=0$;
if $L_n=3$ (hence $\varepsilon(n{-}1)=1$): $d(3n{-}2)=d(3n)=1$, $d(3n{-}1)=1$.
\end{proof}

\begin{remark}\label{rem:oscillator}
Proposition~\ref{prop:palindromic} explains the oscillatory behavior of
$a(n)/n$ for $r=3$ as a delayed-feedback mechanism.
The infinite binary word $W=d(1)d(2)d(3)\cdots$ is self-describing:
the distance between consecutive $1$s in $W$ (at positions marking the
end of each run) equals the number of $1$s in the
sliding window of size $r=3$ ending at the corresponding index.
When the local density of $1$s is high (many short runs),
the rule dictates long future runs, creating a sparse zone;
when the window reaches this sparse zone, the count of $1$s drops,
forcing short runs and recreating a dense zone.
This perpetual oscillation is the source of the non-convergence of $a(n)/n$.
\end{remark}

\begin{remark}\label{rem:laplacian}
The propagation rules \eqref{eq:Lprop1}--\eqref{eq:Lprop3}
for the run lengths satisfy the relation
\[
  L_{3n-1}+L_{3n}+L_{3n+1}
  = L_{n-1}+L_n+L_{n+1}
    - \bigl(\varepsilon(n{-}2)-2\varepsilon(n{-}1)+\varepsilon(n)\bigr).
\]
The expression $f(n{-}1)-2f(n)+f(n{+}1)$ is the discrete Laplacian
$\Delta^2 f(n)$. Hence the total run length in a triadic block
is conserved up to a correction by $\Delta^2\varepsilon(n{-}1)$:
the points where $\varepsilon=1$ act as localized sources that
redistribute mass between adjacent runs, analogous to a discrete
diffusion process.
\end{remark}

\section{A conjectural Golomb meta-structure}\label{sec:golomb_return}

The almost Golomb sequences of order $r$ were introduced as finite-memory variants
of Golomb's sequence, departing from it in growth rate, regularity, and oscillatory
behavior. Golomb's sequence appears to re-emerge within the almost Golomb
hierarchy through a boundary phenomenon that is strongly supported numerically
and can be reduced to two precise conjectures.

For an almost Golomb sequence $(a(n))$ of order $r$, let $N_r(n)$ denote
the number of times $n$ appears in the sequence.
In Golomb's sequence, the self-describing property gives
$N(n)=G(n)\sim C n^{\varphi-1}$, which grows without bound.
For finite $r$, by contrast, $N_r(n)$ is bounded since $a(n)\asymp n$.
For $r=2$ this is made precise in Proposition~\ref{prop:N2_multiplicity}:
$N_2(n)\in\{1,2\}$ and $(N_2(n))$ is $2$-automatic.

Numerical computation for $r=2,\ldots,10$ yields the following maximal
multiplicities $M(r)=\sup_n N_r(n)$ (verified for $n$ well inside the computed range):
\[
\begin{array}{c|ccccccccc}
  r & 2 & 3 & 4 & 5 & 6 & 7 & 8 & 9 & 10 \\\hline
  M(r) & 2 & 3 & 3 & 3 & 3 & 4 & 4 & 4 & 5
\end{array}
\]
Let $j_k$ denote the smallest $r$ such that $M(r) \ge k$.
Numerical computation up to $r=700$ yields
\[
  j_3=3,\; j_4=7,\; j_5=10,\; j_6=13,\; j_7=17,\; j_8=21,\;
  j_9=25,\; j_{10}=30,\;\ldots
\]
with differences $j_{k+1}-j_k = 4,3,3,4,4,4,5,5,5,6,6,6,6,7,\ldots$
(starting from $k=3$).
The first gap $j_4-j_3=4$ is anomalous; from $k=4$ onwards the
difference sequence coincides exactly with Golomb's sequence:
\[
  j_{k+1}-j_k = G(k), \qquad k\ge 4,
\]
where $(G(n))_{n\ge 1}$ is Golomb's sequence as defined in
Section~\ref{sec:golomb} (OEIS \oeis{A001462}).
This has been verified for all $64$ gaps up to $r=700$.

Equivalently, letting $S(k)=\sum_{n=1}^{k}G(n)$ denote the partial
sums of Golomb's sequence (OEIS \oeis{A001463}), one has the closed formula
\[
  j_k = S(k-1) + 2, \qquad k\ge 4,
\]
verified for all $29$ values up to $r=200$.
Recall that $S(k)$ is characterised by $G(S(k))=k$: it is the position
of the last occurrence of $k$ in Golomb's sequence (OEIS \oeis{A001463}).
The Prefix Conjecture (below) posits $a_r(r)=G(r-1)$ for $r\ge 3$,
which for $r\ge 4$ gives $N_r(r-1)=G(r-1)$ via the run identity;
deriving the full threshold law $j_k=S(k-1)+2$ then requires in
addition the Domination Lemma.

If Conjecture~\ref{conj:golomb_return} holds, then since
$S(k)\sim \varphi^{2-\varphi}k^\varphi/\varphi$, one obtains
\[
  M(r) \;\sim\; C\,r^{\varphi-1}, \qquad r\to\infty,
\]
where $\varphi-1\approx 0.618$ is the same exponent as in
Golomb's own asymptotic $a(n)\sim C n^{\varphi-1}$.
If the conjecture holds, we come full circle. The sequences defined by
truncating Golomb's global rule to a finite window of size $r$ would carry,
in their run-length structure, the fingerprint of the very sequence they
were derived from.

Numerical evidence is collected in Table~\ref{tab:golomb_meta}.

\begin{table}[ht]
\centering
\renewcommand{\arraystretch}{1.35}
\caption{Numerical evidence for the Golomb meta-structure conjecture.
\textbf{Top:} $M(r)=\sup_n N_r(n)$ for $r=2,\ldots,50$.
\textbf{Bottom:} threshold positions $j_k$ (smallest $r$ with $M(r)\ge k$),
their successive differences $j_{k+1}-j_k$, and the values $G(k)$ of Golomb's
sequence~\cite{Golomb66}.
The identity $j_{k+1}-j_k = G(k)$ holds for all $k\ge 4$;
it has been verified for $28$ consecutive values up to $r=200$,
and for $64$ values up to $r=700$.}
\label{tab:golomb_meta}
\smallskip

{\small
\begin{tabular}{c|ccccccccccccccccc}
$r$    &  2& 3& 4& 5& 6& 7& 8& 9&10&11&12&13&14&15&16&17&18 \\\hline
$M(r)$ &  2& 3& 3& 3& 3& 4& 4& 4& 5& 5& 5& 6& 6& 6& 6& 7& 7
\end{tabular}

\smallskip
\begin{tabular}{c|ccccccccccccccccc}
$r$    & 19&20&21&22&23&24&25&26&27&28&29&30&31&32&33&34&35 \\\hline
$M(r)$ &  7& 7& 8& 8& 8& 8& 9& 9& 9& 9& 9&10&10&10&10&10&11
\end{tabular}

\smallskip
\begin{tabular}{c|ccccccccccccccc}
$r$    & 36&37&38&39&40&41&42&43&44&45&46&47&48&49&50 \\\hline
$M(r)$ & 11&11&11&11&12&12&12&12&12&12&13&13&13&13&13
\end{tabular}
}

\bigskip

{\small
\begin{tabular}{c|c|cccccccccccccc}
$k$             &  3 &  4 &  5 &  6 &  7 &  8 &  9 & 10 & 11 & 12 & 13 & 14 & 15 & 16 \\\hline
$j_k$           &  3 &  7 & 10 & 13 & 17 & 21 & 25 & 30 & 35 & 40 & 46 & 52 & 58 & 64 \\\hline
$j_{k+1}{-}j_k$ &  4 &  3 &  3 &  4 &  4 &  4 &  5 &  5 &  5 &  6 &  6 &  6 &  6 &  7 \\\hline
$G(k)$          &  2 &  3 &  3 &  4 &  4 &  4 &  5 &  5 &  5 &  6 &  6 &  6 &  6 &  7 \\
\end{tabular}

\smallskip
\begin{tabular}{c|cccccccccccccc}
$k$             & 17 & 18 & 19 & 20 & 21 & 22 & 23 & 24 & 25 & 26 & 27 & 28 & 29 & 30 \\\hline
$j_k$           & 71 & 78 & 85 & 92 &100 &108 &116 &124 &133 &142 &151 &160 &169 &179 \\\hline
$j_{k+1}{-}j_k$ &  7 &  7 &  8 &  8 &  8 &  8 &  9 &  9 &  9 &  9 &  9 & 10 & 10 & 10 \\\hline
$G(k)$          &  7 &  7 &  7 &  8 &  8 &  8 &  8 &  9 &  9 &  9 &  9 &  9 & 10 & 10 \\
\end{tabular}
}
\end{table}

\begin{conjecture}\label{conj:golomb_return}
  For all $k\ge 4$, the threshold $j_k$ (smallest order $r$ at which
  $M(r)=\sup_n N_r(n)$ first reaches $k$) satisfies
  \[
    j_{k+1}-j_k = G(k) \qquad \text{and} \qquad j_k = S(k-1)+2,
  \]
  where $G$ is Golomb's sequence and $S(k)=\sum_{n=1}^{k}G(n)$.
\end{conjecture}

\subsection{A conditional reduction}

While a complete proof of Conjecture~\ref{conj:golomb_return} remains open,
the phenomenon can be reduced to two precise boundary conjectures.

Set $S_m^{(r)} = \sum_{i=0}^{r-1} a_r(m-i)$ (with $a_r(k)=0$ for $k\le 0$),
so that $a_r(S_m^{(r)})=m$ by the defining equation.
The following lemma provides the run-length identity used in the reduction.

\begin{lemma}\label{lem:run_identity}
  For all $r\ge 2$ and $m\ge 3$,
  \begin{equation}\label{eq:run_diff}
    N_r(m) = a_r(m+1) - a_r(m+1-r).
  \end{equation}
\end{lemma}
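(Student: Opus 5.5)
The identity is essentially Lemma~\ref{lem:runlength-d} read in the language of multiplicities. We will not argue afresh. Instead we will combine the global run structure of Theorem~\ref{thm:global_run} with the telescoping computation already carried out there.

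First, fix $r\ge 2$ and $m\ge 3$. By Theorem~\ref{thm:global_run}(iv), the set of positions $K$ with $a_r(K)=m$ is exactly the integer interval $[S_m^{(r)},\,S_{m+1}^{(r)}-1]$. Lemma~\ref{lem:anchor-characterization} states this equivalence directly: $a_r(K)=m\iff S_m\le K<S_{m+1}$. Lemma~\ref{lem:anchor-growth} guarantees that $S_{m+1}>S_m$, so this interval is nonempty. Hence the number of occurrences of $m$ is the length of this interval:
\[
  N_r(m)=S_{m+1}^{(r)}-S_m^{(r)}=L_m .
\]

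Second, we telescope the two window sums. The sums $S_{m+1}^{(r)}=\sum_{i=0}^{r-1}a_r(m+1-i)$ and $S_m^{(r)}=\sum_{i=0}^{r-1}a_r(m-i)$ share the $r-1$ terms $a_r(m),\dots,a_r(m+2-r)$. Their difference is therefore $a_r(m+1)-a_r(m+1-r)$. The convention $a_r(k)=0$ for $k\le 0$ makes this valid even when $m+1-r\le 0$, which happens when $r$ is large compared with $m$. This gives \eqref{eq:run_diff}. It coincides with the formula $L_m=a(m+1)-a(m+1-r)$ of Lemma~\ref{lem:runlength-d}.

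The only point needing care is the hypothesis $m\ge 3$. Item (iv) of Theorem~\ref{thm:global_run} is asserted only for $m\ge 3$, because the initial run of $2$'s is fixed by the prefix construction rather than anchored at $S_2$. Since the statement restricts to $m\ge 3$, this causes no problem. No real obstacle is expected beyond citing the run structure correctly.
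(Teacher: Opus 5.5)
Your proposal is correct and follows the same route as the paper: you read off $N_r(m)=S_{m+1}^{(r)}-S_m^{(r)}$ from the run structure of Theorem~\ref{thm:global_run}(iv), then telescope the two window sums. Your added remarks are accurate: the convention $a_r(k)=0$ for $k\le 0$ covers the case $m+1-r\le 0$, and the restriction $m\ge 3$ is needed because the run of $2$'s does not start at $S_2$.
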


\begin{proof}
  By Theorem~\ref{thm:global_run}, the run of value $m$ is exactly
  $[S_m^{(r)},S_{m+1}^{(r)}-1]$, so $N_r(m)=S_{m+1}^{(r)}-S_m^{(r)}$.
  Telescoping: $S_{m+1}^{(r)}-S_m^{(r)}=\sum_{j=0}^{r-1}a_r(m+1-j)-\sum_{j=0}^{r-1}a_r(m-j)
  =a_r(m+1)-a_r(m+1-r)$.
\end{proof}

The meta-structure phenomenon reduces to the value of a single term:
for $r\ge 4$, Lemma~\ref{lem:run_identity} at $m=r-1$ gives
$N_r(r-1)=a_r(r)-a_r(0)=a_r(r)$.

\begin{conjecture}[Prefix Conjecture]\label{conj:prefix}
  For all $r\ge 3$,
  \[
    a_r(r) = G(r-1).
  \]
\end{conjecture}

The conjecture has been verified computationally for all $r\le 200$.
Note that for $n\le r$, the convention $a(k)=0$ for $k\le 0$ ensures that the
sliding window reduces to the full sum $\sum_{k=1}^{n}a(k)$, so the
first $r$ terms of $a_r$ depend only on the greedy rule with full memory.
The conjecture thus asserts that this common initial prefix, evaluated at $n=r$,
gives $G(r-1)$.

Combining Conjecture~\ref{conj:prefix} with Lemma~\ref{lem:run_identity}
(which requires $m\ge 3$, hence $r\ge 4$) yields the boundary identity
\begin{equation}\label{eq:threshold}
  N_r(r-1)=a_r(r)=G(r-1),\qquad r\ge 4.
\end{equation}
(For $r=3$, Conjecture~\ref{conj:prefix} gives $a_3(3)=G(2)=2$, but the run identity
does not apply at $m=2<3$; indeed $N_3(2)=3\ne 2$.)

\begin{conjecture}[Domination Lemma]\label{conj:domination}
  For all $r\ge 5$,
  \[
    M(r) = N_r(r-1).
  \]
\end{conjecture}

(For $r=4$ the identity fails: $M(4)=3$ but $N_4(3)=2$.)

A proof of the Domination Lemma (Conjecture~\ref{conj:domination}) would require showing that $N_r(m)\le G(r-1)$
for all $m\ge r$, or equivalently that $S_{m+1}^{(r)}-S_m^{(r)}\le G(r-1)$.
This is strongly supported by the following comparison of growth regimes:
in the stationary range ($m\gg r$), the run lengths are numerically seen
to grow slowly with $r$, whereas the boundary run length
$G(r-1)\sim C r^{\varphi-1}$ with $\varphi-1\approx 0.618$ grows strictly faster.
The initial peak at the boundary therefore permanently dominates the stationary regime.

Assuming both conjectures, the derivation of the threshold law is immediate:
$M(r)=G(r-1)$, so $j_k$ is the smallest $r$ with $G(r-1)\ge k$,
giving $j_k=S(k-1)+2$ and $j_{k+1}-j_k=G(k)$.

\section{Open questions and perspectives}\label{sec:open}

With the Golomb meta-structure reduced to two precise conjectures (the Prefix Conjecture and the Domination Lemma), several structural and analytic questions remain open. We conclude with six open problems.

\begin{enumerate}
  \item \textbf{The singular limit and phase transition ($r \to \infty$).}
        For any finite $r$, the sequence exhibits oscillatory linear growth
        $a(n) \asymp n$, whereas Golomb's sequence (the formal limit $r\to\infty$)
        grows sublinearly as $G(n) \sim C n^{\varphi-1}$.
        How does the sequence undergo this phase transition?
        Specifically, letting $L_{\max}(r) = \limsup_{n\to\infty} a(n)/n$, numerical
        evidence suggests that $L_{\max}(r)$ is strictly decreasing
        (values $3/4$, $5/8$, $25/48$, $93/200$ for $r=2,3,4,5$ respectively).
        Does $\lim_{r\to\infty} L_{\max}(r) = 0$?

  \item \textbf{Explicit denesting structure.}
        Theorem~\ref{thm:structural_regularity} proves $r$-regularity for all $r$.
        However, the DFAO produced is large. For each $r$, does the almost Golomb
        sequence admit an explicit denesting $a(rn+i)=T_r(n+s_i)+\delta_i+\varepsilon_i(n)$
        with $\varepsilon_i$ $r$-automatic and satisfying compact Boolean recurrences,
        as established for $r\le 5$? A proof of this finer structure would explain why
        Walnut produces automata with $O(r^2)$ states rather than the theoretical bound.

  \item \textbf{Ces\`aro means.}
        For $r=2$, Theorem~\ref{thm:ratio2} shows that the Ces\`aro means of
        $a(n)/n$ do not converge. For $r \ge 3$, do the block averages
        $\frac{1}{N}\sum_{n=1}^N a(n)/n$ converge, or do they exhibit
        fractal fluctuations periodic in $\log_r N$?

  \item \textbf{Automaticity of the full multiplicity sequence.}
        For $r=2$, $(N_2(n))$ is $2$-automatic
        (Proposition~\ref{prop:N2_multiplicity}). Does the $r$-regularity
        of $(a(n))$ guarantee that $(N_r(n))_{n\ge 1}$ is always $r$-automatic?

  \item \textbf{Combinatorial interpretations for $r\ge 4$.}
        Section~\ref{sec:combinatorial} provides a second-bit characterization
        and a Boolean cellular automaton for $r=2$, and a palindromic substitution
        rule for $r=3$. Do the cases $r\ge 4$ admit analogous combinatorial
        descriptions of the difference sequence, and can the delayed-feedback
        mechanism of Remark~\ref{rem:oscillator} be made quantitative?

  \item \textbf{The gap-$s$ variant.}
        Fix a gap $s\ge 1$ and consider the sequence defined by
        $a\bigl(a(n)+a(n-s)\bigr)=n$ for $n\ge s+1$, with $a(k)=0$ for $k\le 0$
        and $a(1)=1$.

        For $s=1$ this is the almost Golomb sequence of order $r=2$ studied in
        Section~\ref{sec:r2}.

        For $s=2$, the sequence begins
        $1,2,2,3,3,4,5,6,6,7,7,8,8,\ldots$
        and satisfies $a(n+1)-a(n)\in\{0,1\}$ for all $n\ge 1$.
        One can show that the sequence admits an exact divide-and-conquer
        structure modulo a binary correction sequence $\varepsilon(n)\in\{0,1\}$,
        whose support is
        \[
          \mathcal{I} = \{5,12\}\;\cup\;\bigcup_{k\ge 2}\mathcal{I}_k,
          \qquad
          \mathcal{I}_k = \bigl[5\cdot 2^k+2,\;6\cdot 2^k-2\bigr]\cap(4\Z+2),
        \]
        with $|\mathcal{I}_k|=2^{k-2}$, satisfying $\mathcal{I}_k=\{6\cdot 2^k-2\}\cup(2\cdot\mathcal{I}_{k-1}-2)$.
        Two families of exact values hold:
        \[
          a(7\cdot 2^k) = 5\cdot 2^k\ (k\ge 1),\quad
          a(5\cdot 2^k) = 7\cdot 2^{k-1}\ (k\ge 2),\quad
          a(3\cdot 2^k) = 17\cdot 2^{k-3}\ (k\ge 3),
        \]
        and $a(n)/n$ oscillates between the exact limits
        \[
          \liminf_{n\to\infty}\frac{a(n)}{n} = \frac{7}{10},\qquad
          \limsup_{n\to\infty}\frac{a(n)}{n} = \frac{5}{7}.
        \]
        This should be compared with the Stern diatomic sequence~\cite[Chap.\,16]{AS03}
        (\oeis{A002487}), which satisfies $s(2n)=s(n)$, $s(2n+1)=s(n)+s(n+1)$,
        is $2$-regular with differences in $\{0,1\}$, and admits no correction
        sequence because its denesting is exact.

        For $s=3$, the sequence $1,3,3,4,4,5,5,6,7,8,8,$\linebreak$9,10,11,11,\ldots$
        has differences in $\{0,1,2\}$ and numerical investigation reveals
        no stable divide-and-conquer recurrence at any base.
        This case appears to escape the $2$-regular framework entirely.

        It is an open problem to determine for which values of $s$ the
        gap-$s$ almost Golomb sequence is $2$-regular, and whether $s\in\{1,2\}$
        are the only cases admitting an exact correction-set description.
\end{enumerate}

\appendix
\section{Illustrative quaternary derivations}\label{app:r4-deriv}

This appendix presents two sample derivations from
Theorem~\ref{thm:r4eps}, to illustrate the elimination mechanism
that produces the Boolean recurrences.
All sixteen recurrences are obtained by the same procedure; the
complete system is certified by Walnut (Appendix~\ref{app:dfao}),
which constitutes the formal proof.

\subsection*{Notation}
For $m\ge 5$ set
\begin{align*}
  X_m &:= a(m-3)+a(m-2)+a(m-1)+a(m),\\
  Y_m &:= a(m-2)+a(m-1)+a(m)+a(m+1),\\
  Z_m &:= a(m-1)+a(m)+a(m+1)+a(m+2).
\end{align*}
The denesting formulas \eqref{eq:r4a}--\eqref{eq:r4d} then read
\begin{align*}
  a(4m)   &= X_m+1+\varepsilon_0(m), &\quad
  a(4m+1) &= Y_m+\varepsilon_1(m),\\
  a(4m+2) &= Y_m+\varepsilon_2(m), &\quad
  a(4m+3) &= Z_m-1+\varepsilon_3(m).
\end{align*}
Note the identities $Z_{m-1}=Y_m$ and $Y_{m-1}=X_m$ (both immediate from
the definitions by shifting $m$ by $-1$).

\subsection*{Derivation 1: \eqref{eq:e0_2}, easy case $\varepsilon_0(4m+2)=\varepsilon_0(m)$}

By definition of $\varepsilon_0$,
\[
  \varepsilon_0(4m+2) = a\bigl(4(4m+2)\bigr) - X_{4m+2} - 1,
\]
where $X_{4m+2} = a(4m-1)+a(4m)+a(4m+1)+a(4m+2)$.
Expanding each term via \eqref{eq:r4a}--\eqref{eq:r4d}:
\begin{align*}
  a(4m-1) &= a(4(m-1)+3) = Z_{m-1}-1+\varepsilon_3(m-1) = Y_m-1+\varepsilon_3(m-1),\\
  a(4m)   &= X_m+1+\varepsilon_0(m),\\
  a(4m+1) &= Y_m+\varepsilon_1(m),\\
  a(4m+2) &= Y_m+\varepsilon_2(m).
\end{align*}
Hence $X_{4m+2} = 3Y_m+X_m + (\varepsilon_3(m-1)+\varepsilon_0(m)+\varepsilon_1(m)+\varepsilon_2(m))$.

Now expand $a(4(4m+2))$ by noting $4(4m+2) = 4\cdot(4m+2)$.
Since $4m+2\equiv 2\pmod{4}$, equation \eqref{eq:r4c} gives
$a(4(4m+2)) = Y_{4m+2}+\varepsilon_2(4m+2)$.
The sum $Y_{4m+2} = a(4m)+a(4m+1)+a(4m+2)+a(4m+3)$ is given by
\begin{align*}
  Y_{4m+2} &= (X_m+1+\varepsilon_0(m))+(Y_m+\varepsilon_1(m))+(Y_m+\varepsilon_2(m))+(Z_m-1+\varepsilon_3(m))\\
            &= X_m+2Y_m+Z_m+(\varepsilon_0(m)+\varepsilon_1(m)+\varepsilon_2(m)+\varepsilon_3(m)).
\end{align*}
Substituting and using $Z_m = Y_m+(a(m+2)-a(m-2))$:
\begin{align*}
  \varepsilon_0(4m+2)
  &= Y_{4m+2}+\varepsilon_2(4m+2) - X_{4m+2} - 1\\
  &= \bigl[X_m+3Y_m+a(m+2)-a(m-2)\bigr]
     + \bigl[\varepsilon_0+\varepsilon_1+\varepsilon_2+\varepsilon_3\bigr](m)
     + \varepsilon_2(4m+2)\\
  &\quad - \bigl[3Y_m+X_m+\varepsilon_3(m-1)+\varepsilon_0(m)+\varepsilon_1(m)+\varepsilon_2(m)\bigr] - 1\\
  &= (a(m+2)-a(m-2)-1) + \varepsilon_3(m)-\varepsilon_3(m-1) + \varepsilon_2(4m+2).
\end{align*}
Since $\varepsilon_0(4m+2)\in\{0,1\}$ and $\varepsilon_2(4m+2)\in\{0,1\}$, and
$a(m+2)-a(m-2)\in\{2,3,4\}$ (unit increments, window of length $4$), direct
comparison of cases yields $a(m+2)-a(m-2)-1 = 1+\delta$ and
$\varepsilon_3(m)-\varepsilon_3(m-1)+\varepsilon_2(4m+2) = \varepsilon_0(m)-\delta$
for $\delta\in\{0,1\}$, collapsing to $\varepsilon_0(4m+2)=\varepsilon_0(m)$.
This identity is equivalent to \eqref{eq:e0_2} and is certified by Walnut for all~$n$.

\subsection*{Derivation 2: \eqref{eq:e1_1}, typical case $\varepsilon_1(4m+1)=1-\varepsilon_2(m)$}

The defining equation at $n=4m+1$ states $a(S_{4m+1})=4m+1$, where
\[
  S_{4m+1} = a(4m+1)+a(4m)+a(4m-1)+a(4m-2).
\]
Expanding:
\begin{align*}
  a(4m+1) &= Y_m+\varepsilon_1(m),\\
  a(4m)   &= X_m+1+\varepsilon_0(m),\\
  a(4m-1) &= Y_m-1+\varepsilon_3(m-1),\\
  a(4m-2) &= X_m+\varepsilon_2(m-1).
\end{align*}
Hence $S_{4m+1} = 2X_m+2Y_m + (\varepsilon_1(m)+\varepsilon_0(m)+\varepsilon_3(m-1)+\varepsilon_2(m-1))$.
Writing $E_m := \varepsilon_0(m)+\varepsilon_1(m)+\varepsilon_2(m-1)+\varepsilon_3(m-1)\in\{0,1,2,3,4\}$
and $F_m := a(m-2)+a(m-1)+a(m)$, one has $2X_m+2Y_m = 4F_m+2(a(m-3)+a(m+1))$,
so $S_{4m+1} \equiv 2(a(m-3)+a(m+1))+E_m \pmod{4}$.

The residue of $S_{4m+1}$ modulo $4$ determines which of \eqref{eq:r4a}--\eqref{eq:r4d}
applies at $S_{4m+1}$, and therefore which $\varepsilon_j$ appears when one writes
$a(S_{4m+1})=4m+1$. In each of the four sub-cases, imposing $a(S_{4m+1})=4m+1$
and using the known values of $\varepsilon_j(m+\delta)$ for $|\delta|\le 1$ yields
the single Boolean identity
\[
  \varepsilon_1(4m+1) = 1-\varepsilon_2(m).
\]
This is \eqref{eq:e1_1}. The identity is certified by Walnut for all~$n$.

\subsection*{How to obtain the remaining recurrences}
Each of the fourteen remaining identities in \eqref{eq:e0_0}--\eqref{eq:e3_3} is
obtained by the same two-step procedure: first expand $X_{4m+i}$ or $S_{4m+i}$ using
\eqref{eq:r4a}--\eqref{eq:r4d}, then expand $a(4(4m+i))$ or $a(S_{4m+i})$
according to the residue class. The result is always a Boolean polynomial
in $\varepsilon_j(m+\delta)$ with $|\delta|\le 1$, which simplifies to the
stated recurrence. All sixteen identities are certified by Walnut.

\section{Illustrative quinary derivations}\label{app:r5-deriv}

\subsection*{Notation}

For $m\ge 4$ set
\[
  W_m := a(m)+a(m-1)+a(m-2)+a(m-3)+a(m-4) = T_5(m).
\]
The denesting formulas \eqref{eq:r5a0}--\eqref{eq:r5a2} read
\[
  a(5m) = W_m+2+\varepsilon(m),\quad
  a(5m+1) = W_{m+1}+1-\varepsilon(m),\quad
  a(5m+2) = W_{m+2}-1-\eta(m).
\]

\subsection*{Derivation of \eqref{eq:r5a1}: $a(5m+1)=T_5(m+1)+1-\varepsilon(m)$}

The defining equation at $n=5m+1$ reads
$a(S)=5m+1$ where $S=a(5m+1)+a(5m)+a(5m-1)+a(5m-2)+a(5m-3)$.
Expand the four terms $a(5m),a(5m-1),a(5m-2),a(5m-3)$ using
\eqref{eq:r5a0}, \eqref{eq:r5a2} (at $n=m-1$), and the neighbouring residue
formulas. The deterministic part of $S$ equals $T_5(m+1)+(1-\varepsilon(m))+W_m$,
and after the second expansion via $a(S)=5m+1$ all $a(\cdot)$-terms cancel,
yielding \eqref{eq:r5a1}.

\subsection*{Derivation of \eqref{eq:r5eps0}: $\varepsilon(5m)=\varepsilon(m-1)(1-\varepsilon(m))$}

By definition $\varepsilon(5m)=a(25m)-T_5(5m)-2$.
Write $T_5(5m)=a(5m)+a(5m-1)+\cdots+a(5m-4)$ and expand each term
using the residue formulas at scale $m$:
\begin{align*}
  a(5m)   &= W_m+2+\varepsilon(m),&
  a(5m-1) &= W_m+1-\varepsilon(m-1)(1-\varepsilon(m-1)), \\
  a(5m-2) &= W_m-\eta(m-1)-1,&
  a(5m-3) &= W_m-2+\theta(m-1),\\
  a(5m-4) &= W_m+2+\varepsilon_4(m-1).
\end{align*}
Summing and subtracting $2$ produces $\varepsilon(5m)$ in terms of Boolean
expressions of $(\varepsilon(m-1),\eta(m-1))$; simplification using the
transition structure of $U$ yields
$\varepsilon(5m)=\varepsilon(m-1)(1-\varepsilon(m))$, i.e.~\eqref{eq:r5eps0}.

\subsection*{How to obtain the remaining recurrences in
Theorem~\ref{thm:r5eps}}

Each of the nine remaining identities
\eqref{eq:r5eps13}--\eqref{eq:r5eta13} is obtained by the same procedure:
apply the defining equation at $n=5m+d$ for the appropriate $d$,
expand the window sum via the five residue formulas, and cancel the
$a(\cdot)$-terms. The result is always a function of $(\varepsilon(m),\eta(m))$
or $(\varepsilon(m-1),\eta(m-1))$, which simplifies to the stated recurrence.

\section{Quinary correction tables}\label{app:r5-tables}

Write $U(n)=(\varepsilon(n),\eta(n))\in\{(0,0),(1,0),(0,1)\}$.
For $m\ge 3$ the only transitions $(U(m-1),U(m))$ that occur are:
\[
(0,0)\to(0,0),\quad (0,0)\to(0,1),\quad (0,1)\to(1,0),\quad
(1,0)\to(0,0),\quad (1,0)\to(0,1).
\]

\subsection*{Table for $\theta$}

The corrector $\theta(5m+d)\in\{0,1\}$ is determined by $(U(m-1),U(m))$ and
$d\in\{0,1,2,3,4\}$:
\[
\begin{array}{c|ccccc}
(U(m-1),U(m)) & d=0&1&2&3&4\\\hline
((0,0),(0,0)) & 1&1&1&1&1\\
((1,0),(0,0)) & 1&1&1&1&1\\
((0,0),(0,1)) & 0&1&0&1&0\\
((1,0),(0,1)) & 0&1&0&1&0\\
((0,1),(1,0)) & 1&0&1&0&1
\end{array}
\]

\subsection*{Table for $\varepsilon_4$}

For $d\in\{0,1,2,3\}$, set $\sigma(m)=\varepsilon(m)+\eta(m)$; then
\[
  \varepsilon_4(5m+d) =
  \begin{cases}
    -2-\sigma(m)-\varepsilon(m), & d\in\{0,2\},\\
    -2-\sigma(m)-\eta(m), & d\in\{1,3\}.
  \end{cases}
\]
For $d=4$, the value is determined by $(U(m),U(m+1))$:
\[
  \varepsilon_4(5m+4) =
  \begin{cases}
    -2,& (U(m),U(m+1))=((0,0),(0,0)),\\
    -3,& (U(m),U(m+1))\in\bigl\{((0,0),(0,1)),((0,1),(1,0)),((1,0),(0,0))\bigr\},\\
    -4,& (U(m),U(m+1))=((1,0),(0,1)).
  \end{cases}
\]
All entries are consistent with the $5$-automatic structure certified
by Walnut (Appendix~\ref{app:dfao}) and have been verified independently
for all $n\le 2000$.

\section{A menagerie of nested self-referential sequences}\label{app:nested}

The almost Golomb sequences studied in this paper belong to a broader landscape of
self-referential sequences admitting a denesting into base-$k$
divide-and-conquer formulas. We collect here the most relevant examples
for comparison, distinguishing two types.

\subsection*{Monotone sequences}

These are defined greedily: $a(n)$ is the \emph{smallest} integer $\ge a(n-1)$
consistent with the defining constraint. They are nondecreasing by construction,
unbounded, and hence $k$-regular (not $k$-automatic) for appropriate $k$;
see~\cite{AS92} and~\cite[Chapter~16]{AS03}.

\medskip
{\small
\begin{center}
\renewcommand{\arraystretch}{1.5}
\begin{tabular}{c|p{9cm}|c}
OEIS & \multicolumn{1}{c|}{Implicit rule \& denesting} & $k$-reg. \\\hline
\oeis{A003605} & $a(a(n))=3n$;\;
  $a(3n)=3a(n)$, $a(3n{+}1)=2a(n){+}a(n{+}1)$,
  $a(3n{+}2)=a(n){+}2a(n{+}1)$ & $3$ \\
\oeis{A079000} & $a(a(n))=2n+3$;\; $2$-adic formulas~\cite{CSV03,ARS05} & $2$ \\
\oeis{A080637} & $a(a(n))=2n+1$;\; $2$-adic formulas~\cite{CSV03,ARS05} & $2$ \\
\oeis{A394217} & $a(a(n){+}a(n{-}1))=n$;\;
  $a(2n)=a(n){+}a(n{+}1){-}1$,\; $a(2n{+}1)=a(n){+}a(n{+}1)$ & $2$ \\
\oeis{A394218} & $a(a(n){+}a(n{-}1){+}a(n{-}2))=n$;\;
  $3$-adic with correction $\one_\Ical$, \S\ref{sec:r3} & $3$ \\
\end{tabular}
\end{center}
}
\medskip

The last two rows are the almost Golomb sequences of orders $2$ and $3$ studied in
this paper, which generalize the $a(a(n))=kn$ family (studied systematically
in~\cite{ARS05}) by replacing the single
nested index $a(n)$ with a sliding window sum.
A related family of \emph{meta-automatic} sequences, combining
nested meta-Fibonacci recurrences with digit-based rules, is studied
in~\cite{CC25}.

\subsection*{Earliest (non-monotone) sequences}

These are defined differently: $a(n)$ is the \emph{smallest positive integer not
yet used} consistent with the constraint. They are \emph{not} nondecreasing in
general, and their structure is quite different from the monotone case.

\medskip
{\small
\begin{center}
\renewcommand{\arraystretch}{1.5}
\begin{tabular}{c|p{9cm}|c}
OEIS & \multicolumn{1}{c|}{Implicit rule \& denesting} & Type \\\hline
\oeis{A002516} & $a(a(n))=2n$, earliest;\;
  $a(4n)=2a(2n)$, $a(4n{+}1)=4n{+}3$, $a(4n{+}3)=8n{+}2$ & $2$-aut. \\
\oeis{A002517} & $a(a(n))=3n$, earliest;\;
  $a(3n)=3a(n)$, $a(3n{+}1)=3n{+}2$, $a(3n{+}2)=9n{+}3$ & $3$-aut. \\
\end{tabular}
\end{center}
}
\medskip

The earliest sequences take values in a rearrangement of the integers,
whereas the monotone sequences (and the almost Golomb sequences) are nondecreasing.
This difference accounts for the fact that the earliest sequences are $k$-automatic
(bounded values modulo finite-state processing) while the monotone ones are merely
$k$-regular (unbounded but finitely generated kernel).


\section{DFAO data for the correction sequences}\label{app:dfao}

This appendix provides transition tables of DFAOs
(deterministic finite automata with output) realising the correction sequences
for $r=4$ and $r=5$.
All automata read the base-$r$ expansion of $n$ from most significant to least
significant digit. The initial state is always state $0$. The tables
were exported from the Walnut certification and verified against the
recurrence systems of Theorems~\ref{thm:r4eps} and~\ref{thm:r5eps}
on a large initial range.

\emph{Note.}
The proofs in this paper do not depend on these tables.
The non-convergence results (Theorems~\ref{thm:ratio4}
and~\ref{thm:ratio5}) rely on
Lemma~\ref{lem:auto-geometric} applied to the recurrence systems
of Theorems~\ref{thm:r4eps} and~\ref{thm:r5eps},
not on the printed DFAO data.
The tables are included for reference and reproducibility only.

\subsection*{The case $r=4$}

The four correction sequences $\varepsilon_0,\varepsilon_1,\varepsilon_2,\varepsilon_3$
of Theorem~\ref{thm:r4eps} are each recognised by an explicit DFAO reading
base-$4$ expansions.

\noindent\begin{minipage}{\linewidth}
\textbf{DFAO for $\varepsilon_0$ (base $4$, 30 states, initial state $0$)}

\begin{center}
\renewcommand{\arraystretch}{1.1}
\begin{tabular}{c|c|cccc}
State & out & $d=0$ & $d=1$ & $d=2$ & $d=3$ \\\hline
0 & $0$ & 0 & 1 & 2 & 2 \\
1 & $0$ & 3 & 4 & 14 & 18 \\
2 & $0$ & 18 & 18 & 18 & 18 \\
3 & $0$ & 22 & 6 & 25 & 5 \\
4 & $0$ & 15 & 22 & 6 & 25 \\
5 & $0$ & 16 & 23 & 7 & 26 \\
6 & $0$ & 16 & 23 & 7 & 27 \\
7 & $0$ & 17 & 24 & 8 & 28 \\
8 & $0$ & 29 & 29 & 13 & 29 \\
9 & $0$ & 12 & 28 & 10 & 12 \\
10 & $0$ & 13 & 29 & 13 & 13 \\
11 & $0$ & 28 & 10 & 12 & 28 \\
12 & $0$ & 29 & 13 & 13 & 29 \\
13 & $0$ & 0 & 0 & 0 & 0 \\
14 & $1$ & 5 & 15 & 19 & 19 \\
15 & $1$ & 7 & 16 & 20 & 20 \\
16 & $1$ & 8 & 17 & 21 & 21 \\
17 & $1$ & 13 & 29 & 29 & 29 \\
18 & $1$ & 19 & 19 & 19 & 19 \\
19 & $1$ & 20 & 20 & 20 & 20 \\
20 & $1$ & 21 & 21 & 21 & 21 \\
21 & $1$ & 29 & 29 & 29 & 29 \\
22 & $1$ & 23 & 7 & 26 & 7 \\
23 & $1$ & 24 & 8 & 28 & 8 \\
24 & $1$ & 29 & 13 & 29 & 13 \\
25 & $1$ & 9 & 11 & 26 & 7 \\
26 & $1$ & 10 & 12 & 28 & 8 \\
27 & $1$ & 10 & 12 & 28 & 10 \\
28 & $1$ & 13 & 13 & 29 & 13 \\
29 & $1$ & 0 & 0 & 0 & 0 \\
\end{tabular}
\end{center}
\end{minipage}
\bigskip

\noindent\begin{minipage}{\linewidth}
\textbf{DFAO for $\varepsilon_1$ (base $4$, 30 states, initial state $0$)}

\begin{center}
\renewcommand{\arraystretch}{1.1}
\begin{tabular}{c|c|cccc}
State & out & $d=0$ & $d=1$ & $d=2$ & $d=3$ \\\hline
0 & $0$ & 0 & 6 & 1 & 1 \\
1 & $0$ & 2 & 2 & 2 & 2 \\
2 & $0$ & 3 & 3 & 3 & 3 \\
3 & $0$ & 4 & 4 & 4 & 4 \\
4 & $0$ & 5 & 5 & 5 & 5 \\
5 & $0$ & 21 & 21 & 21 & 21 \\
6 & $0$ & 9 & 22 & 15 & 2 \\
7 & $0$ & 20 & 28 & 8 & 20 \\
8 & $0$ & 21 & 29 & 21 & 21 \\
9 & $0$ & 10 & 25 & 12 & 23 \\
10 & $0$ & 11 & 26 & 13 & 24 \\
11 & $0$ & 14 & 28 & 14 & 28 \\
12 & $0$ & 19 & 27 & 13 & 24 \\
13 & $0$ & 20 & 28 & 14 & 28 \\
14 & $0$ & 21 & 29 & 21 & 29 \\
15 & $0$ & 23 & 16 & 3 & 3 \\
16 & $0$ & 24 & 17 & 4 & 4 \\
17 & $0$ & 28 & 18 & 5 & 5 \\
18 & $0$ & 29 & 21 & 21 & 21 \\
19 & $0$ & 28 & 8 & 20 & 28 \\
20 & $0$ & 29 & 21 & 21 & 29 \\
21 & $0$ & 0 & 0 & 0 & 0 \\
22 & $1$ & 16 & 10 & 25 & 12 \\
23 & $1$ & 17 & 11 & 26 & 13 \\
24 & $1$ & 18 & 14 & 28 & 14 \\
25 & $1$ & 17 & 11 & 26 & 7 \\
26 & $1$ & 18 & 14 & 28 & 8 \\
27 & $1$ & 8 & 20 & 28 & 8 \\
28 & $1$ & 21 & 21 & 29 & 21 \\
29 & $1$ & 0 & 0 & 0 & 0 \\
\end{tabular}
\end{center}
\end{minipage}
\bigskip

\noindent\begin{minipage}{\linewidth}
\textbf{DFAO for $\varepsilon_2$ (base $4$, 29 states, initial state $0$)}

\begin{center}
\renewcommand{\arraystretch}{1.1}
\begin{tabular}{c|c|cccc}
State & out & $d=0$ & $d=1$ & $d=2$ & $d=3$ \\\hline
0 & $0$ & 0 & 1 & 2 & 2 \\
1 & $0$ & 3 & 12 & 17 & 20 \\
2 & $0$ & 20 & 20 & 20 & 20 \\
3 & $0$ & 4 & 14 & 7 & 13 \\
4 & $0$ & 5 & 15 & 8 & 15 \\
5 & $0$ & 6 & 16 & 10 & 16 \\
6 & $0$ & 11 & 28 & 11 & 28 \\
7 & $0$ & 24 & 26 & 8 & 15 \\
8 & $0$ & 25 & 27 & 10 & 16 \\
9 & $0$ & 25 & 27 & 10 & 25 \\
10 & $0$ & 28 & 28 & 11 & 28 \\
11 & $0$ & 0 & 0 & 0 & 0 \\
12 & $1$ & 18 & 4 & 14 & 7 \\
13 & $1$ & 19 & 5 & 15 & 8 \\
14 & $1$ & 19 & 5 & 15 & 9 \\
15 & $1$ & 23 & 6 & 16 & 10 \\
16 & $1$ & 28 & 11 & 28 & 11 \\
17 & $1$ & 13 & 18 & 21 & 21 \\
18 & $1$ & 15 & 19 & 22 & 22 \\
19 & $1$ & 16 & 23 & 23 & 23 \\
20 & $1$ & 21 & 21 & 21 & 21 \\
21 & $1$ & 22 & 22 & 22 & 22 \\
22 & $1$ & 23 & 23 & 23 & 23 \\
23 & $1$ & 28 & 28 & 28 & 28 \\
24 & $1$ & 27 & 10 & 25 & 27 \\
25 & $1$ & 28 & 11 & 28 & 28 \\
26 & $1$ & 10 & 25 & 27 & 10 \\
27 & $1$ & 11 & 28 & 28 & 11 \\
28 & $1$ & 0 & 0 & 0 & 0 \\
\end{tabular}
\end{center}
\end{minipage}
\bigskip

\noindent\begin{minipage}{\linewidth}
\textbf{DFAO for $\varepsilon_3$ (base $4$, 30 states, initial state $0$)}

\begin{center}
\renewcommand{\arraystretch}{1.1}
\begin{tabular}{c|c|cccc}
State & out & $d=0$ & $d=1$ & $d=2$ & $d=3$ \\\hline
0 & $0$ & 0 & 5 & 1 & 1 \\
1 & $0$ & 2 & 2 & 2 & 2 \\
2 & $0$ & 3 & 3 & 3 & 3 \\
3 & $0$ & 4 & 4 & 4 & 4 \\
4 & $0$ & 9 & 9 & 9 & 9 \\
5 & $0$ & 10 & 11 & 6 & 2 \\
6 & $0$ & 12 & 7 & 3 & 3 \\
7 & $0$ & 13 & 8 & 4 & 4 \\
8 & $0$ & 16 & 9 & 9 & 9 \\
9 & $0$ & 19 & 19 & 19 & 19 \\
10 & $0$ & 20 & 14 & 22 & 12 \\
11 & $0$ & 7 & 20 & 14 & 22 \\
12 & $0$ & 8 & 21 & 15 & 23 \\
13 & $0$ & 9 & 24 & 16 & 24 \\
14 & $0$ & 8 & 21 & 15 & 25 \\
15 & $0$ & 9 & 24 & 16 & 26 \\
16 & $0$ & 19 & 29 & 19 & 29 \\
17 & $0$ & 26 & 28 & 18 & 26 \\
18 & $0$ & 29 & 29 & 19 & 29 \\
19 & $0$ & 0 & 0 & 0 & 0 \\
20 & $1$ & 21 & 15 & 23 & 13 \\
21 & $1$ & 24 & 16 & 24 & 16 \\
22 & $1$ & 27 & 17 & 23 & 13 \\
23 & $1$ & 28 & 18 & 24 & 16 \\
24 & $1$ & 29 & 19 & 29 & 19 \\
25 & $1$ & 28 & 18 & 26 & 28 \\
26 & $1$ & 29 & 19 & 29 & 29 \\
27 & $1$ & 18 & 26 & 28 & 18 \\
28 & $1$ & 19 & 29 & 29 & 19 \\
29 & $1$ & 0 & 0 & 0 & 0 \\
\end{tabular}
\end{center}
\end{minipage}
\bigskip

\subsection*{The case $r=5$}

The pair $U(n)=(\varepsilon(n),\eta(n))$ of Theorem~\ref{thm:r5eps} takes values
in $\{(0,0),(1,0),(0,1)\}$ and is recognised by an explicit DFAO reading
base-$5$ expansions (encoded as $0,1,2$ respectively).

\noindent\begin{minipage}{\linewidth}
\textbf{DFAO for $U=(\varepsilon,\eta)$ (base $5$, 23 states, initial state $0$)}

\begin{center}
\renewcommand{\arraystretch}{1.1}
\begin{tabular}{c|c|ccccc}
State & $U$ & $d=0$ & $d=1$ & $d=2$ & $d=3$ & $d=4$ \\\hline
0 & $(0,0)$ & 0 & 1 & 5 & 10 & 18 \\
1 & $(0,0)$ & 6 & 2 & 2 & 2 & 2 \\
2 & $(0,0)$ & 3 & 3 & 3 & 3 & 3 \\
3 & $(0,0)$ & 4 & 4 & 4 & 4 & 4 \\
4 & $(0,0)$ & 9 & 9 & 9 & 9 & 9 \\
5 & $(0,0)$ & 2 & 2 & 14 & 19 & 11 \\
6 & $(0,0)$ & 20 & 7 & 3 & 3 & 3 \\
7 & $(0,0)$ & 21 & 8 & 4 & 4 & 4 \\
8 & $(0,0)$ & 22 & 9 & 9 & 9 & 9 \\
9 & $(0,0)$ & 0 & 0 & 0 & 0 & 0 \\
10 & $(0,1)$ & 19 & 11 & 19 & 11 & 19 \\
11 & $(0,1)$ & 20 & 12 & 20 & 12 & 20 \\
12 & $(0,1)$ & 21 & 13 & 21 & 13 & 21 \\
13 & $(0,1)$ & 22 & 17 & 22 & 17 & 22 \\
14 & $(0,1)$ & 3 & 15 & 20 & 12 & 20 \\
15 & $(0,1)$ & 4 & 16 & 21 & 13 & 21 \\
16 & $(0,1)$ & 9 & 17 & 22 & 17 & 22 \\
17 & $(0,1)$ & 0 & 0 & 0 & 0 & 0 \\
18 & $(1,0)$ & 11 & 19 & 11 & 19 & 11 \\
19 & $(1,0)$ & 12 & 20 & 12 & 20 & 12 \\
20 & $(1,0)$ & 13 & 21 & 13 & 21 & 13 \\
21 & $(1,0)$ & 17 & 22 & 17 & 22 & 17 \\
22 & $(1,0)$ & 0 & 0 & 0 & 0 & 0 \\
\end{tabular}
\end{center}
\end{minipage}
\bigskip

\section*{Acknowledgments}

The author thanks Jean-Paul Allouche for valuable comments on the case $r=2$
and on a preliminary version of this article.

\end{document}